\newcommand{\cC}{\mathcal{C}}
\newcommand{\N}{\mathbb{N}}
\newcommand{\Z}{\mathbb{Z}}
\newcommand{\SG}[1]{\mathbb{S}_{#1}}
\newcommand{\K}{\mathbb{K}}
\newcommand{\LI}{\mathcal{L}}
\newcommand{\Alt}{\mathbb{A}}
\newcommand{\Aut}{\mathrm{Aut}}
\newcommand{\id}{\mathrm{id}}
\newcommand{\defFK}{\mathcal{D}}
\newcommand{\defT}{\mathcal{T}}
\newcommand{\defIA}{\mathcal{K}}
\newcommand{\Rad}{\mathrm{Rad}\,}
\newcommand{\FK}{\mathcal{E}}
\newcommand{\Sym}{\mathbb{S}}
\newcommand{\charK }{\mathrm{char}\,\K }
\def\imod#1{\allowbreak\mkern0mu({\operator@font mod}\,#1)}
\numberwithin{equation}{section}
\numberwithin{figure}{section}
\numberwithin{table}{section}
\theoremstyle{plain}
\newtheorem{thm}{Theorem}[section]
\newtheorem{lem}[thm]{Lemma}
\newtheorem{cor}[thm]{Corollary}
\newtheorem{pro}[thm]{Proposition}
\newtheorem{defn}[thm]{Definition}
\newtheorem{problem}{Problem}
\theoremstyle{remark}
\newtheorem{rem}[thm]{Remark}
\begin{document}

\title[PBW deformations of a Fomin--Kirillov algebra]{PBW deformations of a Fomin--Kirillov algebra and other examples}

\author{I. Heckenberger}
\author{L. Vendramin}

\address{I. Heckenberger:
	Philipps-Universit\"at Marburg,
	FB Mathematik und Informatik,
	Hans-Meer\-wein-Stra\ss e,
35032 Marburg, Germany.}
\email{heckenberger@mathematik.uni-marburg.de}

\address{L. Vendramin: IMAS--CONICET and Depto. de Matem\'atica, FCEN, Universidad de Buenos Aires, Pabell\'on 1,
Ciudad Universitaria (1428), Buenos Aires, Argentina.}
\email{lvendramin@dm.uba.ar}

\date{}

\subjclass{Primary 16T05; Secondary 20F55}

\thanks{The second-named author is partially supported by PICT-2014-1376, MATH-AmSud 17MATH-01, ICTP,
ERC advanced grant 320974 
and the Alexander von Humboldt Foundation.}

\begin{abstract}
	We begin the study of PBW deformations of graded algebras relevant to the
	theory of Hopf algebras.  One of our examples is the Fomin--Kirillov algebra
	$\FK_3$. Another one appeared in a paper of Garc\'ia Iglesias and Vay.  As a
	consequence of our methods, we determine when the deformations are
	semi\-simple and we are able to produce PBW bases and polynomial identities
	for these deformations.  

	Keywords: Clifford algebra, Fomin--Kirillov algebra, Hopf algebra, Nichols algebra, PBW
	deformation, polynomial identity.

	Corresponding author: Leandro Vendramin
\end{abstract}

\maketitle
\setcounter{tocdepth}{1}
%\tableofcontents{}

\section*{Introduction}

Deformations of several algebraic structures have been of great interest in the
last years. Such deformations include group algebras, Lie algebras, Weyl
algebras, rational Cherednik algebras, Hecke algebras and generalizations.  A
deformation of a graded algebra $A$ given by generators $a_1,\dots,a_n$ and
homogeneous relations $r_1,\dots,r_m$ is an algebra $D$ given by generators $a_1,\dots,a_n$
and relations $r_1+t_1,\dots,r_m+t_m$, where each $t_j$ is a possibly non-homogeneous 
element of degree less than
the degree of $r_j$.  

The classical Poincar\'e--Birkhoff--Witt Theorem for Lie algebras
motivates the study of a
particular family of deformations.  A PBW deformation of a graded algebra $A$
is a deformation $D$ of $A$ such that the associated graded algebra of $D$ is
isomorphic to $A$.  PBW deformations have been considered in many different
contexts.  In the case of quadratic algebras, these deformations have been
recently studied in~\cite{MR1620538,MR1881922,MR2102087,MR2177131}. For $N$-homogeneous
algebras they have been studied in \cite{MR2236596,MR2256407}. In the context
of Hopf algebras and their actions, a related class of algebras was studied in
\cite{MR3784828} and in \cite{MR3317849}.
PBW deformations satisfying additional properties
also appear in several papers where the classification of
finite-dimensional pointed Hopf algebras is considered, see for
example~\cite{MR3119229}.

Nichols algebras over non-abelian groups form a particularly interesting family
of graded algebras where not very much is known. In general, they are not
$N$-Koszul and are not even generated by homogeneous relations of constant
degree. Nevertheless they have remarkable Hilbert series which indicates a rich
internal structure.  We refer to~\cite{MR1913436} for an introduction to the
theory of Nichols algebras. At this place we do not want to go into details, since
we will only use their known presentations as algebras by generators and relations.
Many of the crucial results on Nichols algebras over non-abelian groups can
only be recovered by extensive Gr\"obner basis calculations.  Nevertheless,
these algebras are very important in particular since they appear as an
essential tool in the classification of pointed Hopf algebras with non-abelian
coradical~\cite{MR2799090}, in
combinatorics~\cite{MR2209265,MR3552907,MR1667680,MR3459053,MR3177534} and in
mathematical physics~\cite{L17, MR2106930, MR1969778, MR1904729}. 

In this paper we begin the study of PBW deformations of some finite-dimensional
Nichols algebras over non-abelian groups. Our long-term objective is to
understand the structure of finite-dimensional Nichols algebras and Fomin--Kirillov algebras 
by means of PBW deformations.
For that purpose we concentrate first on two small examples: 1) The Fomin--Kirillov
algebra $\FK_3$, and 2) the Nichols algebra associated with the vertices of the
tetrahedron and constant cocycle $-1$.

Some particular deformations of the Fomin--Kirillov algebra $\FK_3$ have been
considered in \cite{MR1969778,MR2106930} and in~\cite{Bastian}. The cohomology
of $\FK_3$ has been recently computed in~\cite{MR3459024}.

We compute all PBW deformations of these Nichols algebras. It turns out that the moduli space
of PBW deformations is an affine space of very small dimension. Moreover, generically
the deformations are semisimple and the non-semisimple locus is determined.
Our result has some applications:
a) we get a PBW basis of the algebras; b) we produce non-trivial
polynomial identities for our Nichols algebras that were not known before. For
example, as a corollary we prove that the Fomin--Kirillov algebra $\FK_3$ satisfies
the \emph{Hall identity} 
\[
	[[x,y]^2,z]=0.
\]
The PBW deformations we find have also been obtained by Garc\'ia Iglesias and
Vay~\cite{MR1904729}.
These algebras are useful to obtain the classification of
some finite-dimensional pointed Hopf algebras with non-abelian coradical.
We provide a simplification in the presentation of the
$72$-dimensional example.

\medskip
The paper is organized as follows. In Section~\ref{section:preliminaries} we
define our algebras as certain PBW deformations of Nichols algebras compatible
with the group action and we prove some basic properties. Then we recall a
basic result on the simplicity of Clifford algebras that will be useful to
study our examples.  In Section~\ref{section:FK3} we study PBW deformations of
the Fomin--Kirillov algebra $\FK_3$. In Theorem~\ref{thm:FK3:semisimple} we
precisely determine when the deformations of $\FK_3$ are semisimple; it is
remarkable that in the cases where the deformation is not semisimple one finds
either the preprojective algebra of type $A_2$ or the coinvariant algebra
appearing in Schubert calculus~\cite{MR2209265}. In
Theorem~\ref{thm:FK3:basis} a PBW basis is constructed and PBW deformations of
$\FK_3$ are classified in Theorem~\ref{thm:defFK3}.  In
Section~\ref{section:dim72} PBW deformations of the $72$-dimensional Nichols
algebra associated with the vertices of the tetrahedron are studied.  Using Ore
extensions we produce a PBW basis for the deformation of this Nichols algebra,
see Proposition~\ref{pro:Ore}, Theorems~\ref{thm:T:basis} and~\ref{thm:defT}.
In Theorem~\ref{thm:T:semisimple} we determine when these deformations are
semisimple. Most of the results on this deformation are based on calculations
related to an intermediate algebra studied in Section~\ref{section:auxiliar}.

\medskip
We end the introduction by formulating three problems. 

\begin{problem}
	Classify the PBW deformations of the known finite-dimensional
	Nichols algebras over groups. Decide when these algebras are semisimple.
\end{problem}

\begin{problem}
	Classify the PBW deformations of the Fomin--Kirillov algebras and their
	subalgebras appearing in~\cite{MR3552907}. Decide when these deformations are
	semisimple.
\end{problem}

\begin{problem}
	Study the representation theory of the PBW deformations of the algebras in the
	first two problems.
\end{problem}

\subsection*{Acknowledgments}

We thank Pavel Etingof for pointing out the references~\cite{MR2147005,MR2399085}.

\section{Preliminaries}
\label{section:preliminaries}

Throughout this paper we assume that $\K$ is an algebraically closed field of
characteristic $\ne2$. 

\subsection{Deformations}
\label{section:deformations}

Let $\cC $ be a monoidal category. (In our case, $\cC $ will be the category of
$\K G$-modules for some field $\K $ and some finite group $G$.)

\begin{defn}
	Let $A$ be an $\N_0$-graded algebra in $\cC$. A \emph{PBW deformation} of $A$
	in $\cC$ is an
	$\N_0$-filtered algebra $D_A$ in $\cC $ such that $\mathrm{gr}\,D_A\cong A$.
\end{defn}

In our study we usually meet particular families, which will be defined here.

\begin{defn}
	Let $A$ be an $\N_0$-graded algebra over the field $\K$ and let $s\ge 1$ be an integer.
We say that a family $(A(\lambda))_{\lambda \in \K^s}$ is an \emph{affine family of
deformations of $A$} if there are a family $(a_i)_{i\in I}$ of homogeneous generators of $A$,
a family $(r_j)_{j\in J}$ of homogeneous relations of $A$, a basis $(b_l)_{l\in L}$ of $A$
consisting of homogeneous elements, and a family $(c_{jl})_{j\in J,l\in L}\in \K[t_1,\dots,t_s]^{J\times L}$
such that $A(0)=A$ and for any $\lambda \in \K^s$,
$$ A(\lambda) =\K \langle a_i \mid i\in I\rangle /\Big(r_j+\sum_{l\in L}c_{jl}(\lambda )b_l\mid j\in J\Big), $$
and $c_{jl}=0$ whenever $\deg (b_l)\ge \deg (r_j)$.
\end{defn}

Clearly, being an affine family of deformations of an $\N_0$-graded algebra
does not depend on the choice of homogeneous generators and defining relations.

The following proposition is well-known and the proof is elementary. Similar
arguments have been used by Etingof and Rains for example
in~\cite[\S2.2]{MR2399085} and~\cite[Theorem 6.1]{MR2147005}. We refer
to~\cite{MR1299371} for an introduction to the theory of non-commutative
Gr\"ober basis.

\begin{pro}
	\label{pro:folklore}
	Let $A$ be an $\N_0$-graded finite-dimensional algebra over $\K $.
  Let $s,d\in\N$ and
	let $(A(\lambda))_{\lambda\in\K^s}$ be an affine family of deformations of $A$
	such that $\dim A(\lambda)\leq d$ for all $\lambda\in\K^s$. Assume that $\dim
	A(\lambda)=d$ for all $\lambda$ in a Zariski dense subset of $\K^s$. Then
	$A(\lambda)$ is a PBW deformation of $A$ for all $\lambda\in\K^s$.
\end{pro}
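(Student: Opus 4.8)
The plan is to use noncommutative Gröbner basis theory to control the dimension of $A(\lambda)$ uniformly in $\lambda$. Fix a family of homogeneous generators $(a_i)_{i\in I}$ and homogeneous relations $(r_j)_{j\in J}$ witnessing the affine family structure, and fix a monomial order on the free algebra $\K\langle a_i\mid i\in I\rangle$ refining the grading. Since $A=A(0)$ is finite-dimensional, the two-sided ideal generated by $(r_j)_{j\in J}$ has a Gröbner basis whose set of leading monomials yields a finite set of standard (normal) monomials $w_1,\dots,w_d$ forming a basis of $A$; here $d=\dim A$ by the hypothesis that the dense locus has dimension $d$ and that $\dim A(\lambda)\le d$ everywhere (so in particular $\dim A(0)=\dim A=d$). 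The key observation is that the defining relations of $A(\lambda)$, namely $r_j+\sum_l c_{jl}(\lambda)b_l$, have the \emph{same} leading terms as the $r_j$ for every $\lambda$, because $\deg(b_l)<\deg(r_j)$ whenever $c_{jl}\neq 0$ and the order refines the grading. Hence running the Buchberger procedure (forming S-polynomials and reducing) on the relations of $A(\lambda)$ only ever produces overlap ambiguities supported on the \emph{same} overlap words as for $A$, and the coefficients appearing in each reduction step are polynomials in $\lambda$.

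Next I would make this uniformity precise. The Gröbner basis of $A$ is obtained from the $r_j$ after finitely many Buchberger steps; carrying out the formally identical sequence of steps for $A(\lambda)$, one gets, for each $\lambda$, a finite set $G_\lambda$ of elements of $\K\langle a_i\rangle$ whose coefficients are polynomials in $\lambda$, and whose leading monomials are a subset of the leading monomials of the $G_0$ obtained for $A$. Consequently the set of standard monomials for the ideal generated by $G_\lambda$ is \emph{contained in} $\{w_1,\dots,w_d\}$ for every $\lambda$, which re-proves $\dim A(\lambda)\le d$, but more importantly shows: $\dim A(\lambda)=d$ if and only if none of the polynomial coefficients that could have introduced a \emph{new} leading monomial (equivalently, that could have reduced some $w_k$) actually does so — i.e. if and only if a certain finite collection of polynomials in $\lambda$ vanishes. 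Thus $\{\lambda:\dim A(\lambda)=d\}$ is a Zariski-closed subset $Z$ of $\K^s$. By hypothesis $Z$ contains a Zariski-dense subset, so $Z=\K^s$; that is, $\dim A(\lambda)=d$ for all $\lambda$.

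Finally, with $\dim A(\lambda)=d$ established, I would conclude that $A(\lambda)$ is a PBW deformation. The associated graded algebra $\mathrm{gr}\,A(\lambda)$ (with respect to the $\N_0$-filtration induced by placing each $a_i$ in its degree in $A$) is generated by the $a_i$ and satisfies the relations $r_j$ obtained as the top-degree parts of $r_j+\sum_l c_{jl}(\lambda)b_l$; hence there is a surjection $A=\K\langle a_i\rangle/(r_j)\twoheadrightarrow \mathrm{gr}\,A(\lambda)$. Since $\dim\mathrm{gr}\,A(\lambda)=\dim A(\lambda)=d=\dim A$ and $A$ is finite-dimensional, this surjection is an isomorphism, so $\mathrm{gr}\,A(\lambda)\cong A$ and $A(\lambda)$ is a PBW deformation of $A$ as claimed.

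The main obstacle is making the ``run the identical Buchberger steps with polynomial coefficients'' argument genuinely uniform in $\lambda$: one must check that the choice of reductions for $A$ can be transported to $A(\lambda)$ with coefficients lying in $\K[t_1,\dots,t_s]$ (rather than needing to divide by a coefficient that vanishes for special $\lambda$), and that the finiteness of the Gröbner basis for $A$ forces termination for the parametrized family. This is exactly the content of the Etingof–Rains type arguments cited before the statement, and the degree hypothesis $c_{jl}=0$ for $\deg(b_l)\ge\deg(r_j)$ is what guarantees the leading terms never change, so no division by a $\lambda$-dependent coefficient is ever required.
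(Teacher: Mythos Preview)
Your proof is correct and follows essentially the same Gr\"obner basis/density approach as the paper. The one streamlining worth noting: the paper avoids your ``run the identical Buchberger steps with polynomial coefficients'' issue by simply assuming at the outset that the family $(r_j)_{j\in J}$ of homogeneous relations is already a Gr\"obner basis of the ideal of $A$ (enlarging $J$ if necessary); then every S-polynomial already reduces to an expression $\sum_l c_l(\lambda)b_l$ with $c_l\in\K[t_1,\dots,t_s]$, these coefficients vanish on the dense locus and hence identically, so the deformed relations are a Gr\"obner basis for \emph{every} $\lambda$ --- no need to track intermediate Buchberger steps or worry about termination for the parametrized family.
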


\begin{proof}
  Choose a family $(a_i)_{i\in I}$ of homogeneous generators of $A$,
  a family $(r_j)_{j\in J}$ of homogeneous relations of $A$, a basis $(b_l)_{l\in L}$ of $A$
  consisting of homogeneous elements, and a family
  $(c_{jl})_{j\in J,l\in L}\in \K[t_1,\dots,t_s]^{J\times L}$,
  such that $A(0)=A$ and for any $\lambda \in \K^s$,
  $$ A(\lambda )= \K \langle a_i \mid i\in I\rangle 
  /\Big(r_j+\sum_{l\in L}c_{jl}(\lambda )b_l\mid j\in J\Big), $$
  and $c_{jl}=0$ whenever $\deg (b_l)\ge \deg (r_j)$.
  We may assume that
  $(r_j)_{j\in J}$ is a Gr\"obner basis of $A$. Then, by assumption,
  \begin{align} \label{eq:genGroebner}
    \Big(r_j+\sum_{l\in L}c_{jl}(\lambda)b_l\Big)_{j\in J}
  \end{align}
  is a Gr\"obner basis of $A(\lambda)$ for any $\lambda$ in a Zariski dense subset
  $\Lambda $ of $\K^s$.
  Let $p$ be an $S$-polynomial. Then $p$ reduces with respect to the family
  \eqref{eq:genGroebner} to a polynomial $\sum_{l\in L}c_l(\lambda)b_l$,
  where $c_l\in \K[t_1,\dots,t_s]$ and each restriction $c_l|\lambda$ is zero. %$c_l|\Lambda=0$ for all $l\in L$. 
  It follows that
  $c_l=0$ for all $l\in L$, and hence for any $\lambda \in \K^s$,
  \eqref{eq:genGroebner} is a Gr\"obner basis of $A(\lambda)$. This implies the claim.
\end{proof}

\subsection{Clifford algebras}

Let 
$V$ be a finite-dimensional vector space with basis $v_1,\dots,v_n$
and let $q$ be a quadratic form on $V$. The pair $(V,q)$ is called
a \emph{quadratic space}. The \emph{Clifford algebra} $C(V,q)$ is the algebra
given by generators $v_1,\dots,v_n$ and relations
\[
v_i^2=q(v_i),\quad
v_jv_k+v_kv_j=2q_{jk},
\]
for $1\leq i,j,k\leq n$ with $j<k$, where
$q_{jk}=\frac12(q(v_j+v_k)-q(v_j)-q(v_k))$. It is known that $\dim C(V,q)=2^n$,
see for example~\cite[\S9.2, Corollary 2.7]{MR770063}.

Recall that the radical of a symmetric bilinear form $B$ on a vector space $V$
is the subspace of elements $v\in V$ with $B(v,w)=0$ for all $w\in V$.
%having zero pairing with

\begin{thm}
	\label{thm:Clifford}
	Let $(V,q)$ be a quadratic space. 
	\begin{enumerate}
		\item The radical of $C(V,q)$ is generated by the radical of the symmetric bilinear
			form $B_q$ associated with $q$. 
		\item If $\dim V$ is even and $q$ is nondegenerate, then $C(V,q)$ is simple.
		\item If $\dim V$ is odd and $q$ is nondegenerate, then $C(V,q)$ is the
			product of two simple ideals of dimension $2^{\dim V-1}$ each. 
	\end{enumerate}
\end{thm}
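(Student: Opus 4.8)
The plan is to prove the nondegenerate cases (2) and (3) first, and then to reduce (1) to them by splitting off the radical of $B_q$.

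\emph{Parts (2) and (3).} Since $\K$ is algebraically closed with $\charK\ne2$, any two nondegenerate quadratic forms on spaces of the same dimension are isometric (both equivalent to $x_1^2+\dots+x_n^2$), so for nondegenerate $q$ the isomorphism type of $C(V,q)$ depends only on $n=\dim V$; I will write $C_n$ for it. The base cases are immediate: $C_0\cong\K$; $C_1\cong\K[x]/(x^2-1)\cong\K\times\K$, since $x^2-1$ splits over $\K$; and $C_2\cong M_2(\K)$, by writing down the isomorphism on the two generators (or using $x^2+y^2=(x+\sqrt{-1}\,y)(x-\sqrt{-1}\,y)$). The inductive step rests on a periodicity isomorphism $C_{n+2}\cong C_n\otimes M_2(\K)$: take anticommuting generators $e_1,\dots,e_{n+2}$ of $C_{n+2}$ with $e_i^2=1$; then $e_{n+1},e_{n+2}$ generate a subalgebra isomorphic to $C_2\cong M_2(\K)$, while the rescaled elements $e_i':=\sqrt{-1}\,e_ie_{n+1}e_{n+2}$ for $1\le i\le n$ pairwise anticommute, satisfy $(e_i')^2=1$, and commute with $e_{n+1}$ and $e_{n+2}$; these two subalgebras commute with one another and together generate $C_{n+2}$, so the multiplication map from their tensor product onto $C_{n+2}$ is surjective; since the first factor, generated by $n$ pairwise-anticommuting square-one elements, is a quotient of $C_n$ and hence has dimension at most $2^n$, the source has dimension at most $2^n\cdot4=2^{n+2}=\dim C_{n+2}$ (using the known identity $\dim C(V,q)=2^{\dim V}$), so the map is an isomorphism and $C_{n+2}\cong C_n\otimes M_2(\K)$. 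Induction on parity then yields $C_{2k}\cong M_{2^k}(\K)$, which is simple of dimension $2^{2k}$, proving (2); and $C_{2k+1}\cong C_1\otimes M_{2^k}(\K)\cong M_{2^k}(\K)\times M_{2^k}(\K)$, a product of two simple ideals of dimension $2^{2k}=2^{(2k+1)-1}$ each, proving (3).

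\emph{Part (1).} I would set $V_0=\Rad(B_q)$ and pick a complement $W$ with $V=V_0\oplus W$, so that $B_q|_W$ is nondegenerate; note that $q|_{V_0}=0$, since $B_q(v,v)=2q(v)$ and $\charK\ne2$. Let $I$ be the two-sided ideal of $C(V,q)$ generated by $V_0$. For $x\in V_0$ and $v\in V$ we have $xv+vx=B_q(x,v)=0$, so $x$ anticommutes with all of $V$ and hence is normal in $C(V,q)$; therefore $xC(V,q)=C(V,q)x$ is a two-sided ideal with $(xC(V,q))^2=x^2C(V,q)=0$, and $I=V_0\,C(V,q)$. Using the standard monomial basis of $C(V,q)$ attached to a basis of $V=V_0\oplus W$, I would check that $I$ is spanned by those basis monomials involving at least one generator from $V_0$ (prepending such a generator to an ordered monomial and reordering gives either another such monomial or $0$); in particular $I$ is nilpotent, so $I\subseteq\Rad C(V,q)$, and the span of the monomials in the generators of $W$ alone is a subalgebra of $C(V,q)$ that maps isomorphically onto $C(V,q)/I$. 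Thus $C(V,q)/I\cong C(W,q|_W)$, which is semisimple by (2)--(3) because $q|_W$ is nondegenerate; hence $\Rad C(V,q)\subseteq I$, and therefore $\Rad C(V,q)=I$, the ideal generated by $\Rad(B_q)$.

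\emph{Main obstacle.} The delicate point is the periodicity isomorphism in the nondegenerate case: verifying that the $e_i'$ pairwise anticommute and commute with $e_{n+1},e_{n+2}$ needs careful sign bookkeeping, and algebraic closedness enters essentially both in the reduction to the form $\sum x_i^2$ (and in splitting $x^2-1$) and in producing the scalar $\sqrt{-1}$ defining $e_i'$. Everything else is either a base-case computation or a dimension count resting only on the already-quoted identity $\dim C(V,q)=2^{\dim V}$.
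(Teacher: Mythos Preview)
Your proof is correct. For parts (2) and (3) the paper simply cites Scharlau's book, whereas you supply a self-contained argument via the periodicity isomorphism $C_{n+2}\cong C_n\otimes M_2(\K)$; this is a genuinely different route, and your version has the advantage of being complete and of making explicit where algebraic closedness (diagonalizing $q$, extracting $\sqrt{-1}$) and $\charK\ne2$ enter. For part (1) your argument is essentially the same as the paper's: both show that the ideal $I$ generated by $\Rad(B_q)$ is contained in the Jacobson radical and that $C(V,q)/I$ is a nondegenerate Clifford algebra, hence semisimple. One small difference: the paper does not bother to prove that $I$ itself is nilpotent, only that each single-generator ideal $vC(V,q)$ with $v\in\Rad(B_q)$ is nilpotent, which already forces $vC(V,q)\subseteq\Rad C(V,q)$ and hence $I=C(V,q)\Rad(B_q)\subseteq\Rad C(V,q)$; this sidesteps the monomial bookkeeping you sketch. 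The paper also phrases the quotient as $C(V/R,q')$ rather than via a complement $W$, but this is cosmetic.
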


\begin{proof}
	The claims (2) and (3) follow from~\cite[\S9.2, Theorem 2.10]{MR770063}.
	Regarding (1), note that for any element $v$ in the radical $R$ of $B_q$, the
	left ideal generated by $v$ is a nilpotent two-sided ideal of $C(V,q)$. Hence 
	\[
		C(V,q)R\subseteq\Rad C(V,q). 
	\]
	Let $q'$ be the quadratic form on $V/R$ induced by $q$. Then $C(V/R,q')$ is
	semisimple by (2) and (3) and $C(V,q)/C(V,q)R\simeq C(V/R,q')$. Hence $\Rad
	C(V,q)=C(V,q)R$.
\end{proof}

\section{The Fomin--Kirillov algebra $\FK_3$}
\label{section:FK3}

The Fomin--Kirillov algebra $\FK_3$ is defined by generators $a,b,c$ and
relations
\begin{align*}
		&a^2=b^2=c^2=0,\\
 		&ca + bc + ab=cb + ba + ac=0.
\end{align*}
It is known that $\dim\FK_3=12$. A basis is given by
\begin{align*}
	1,
	a, 
	b, 
	c, 
	ab, 
	ac, 
	ba, 
	bc, 
	aba, 
	abc, 
	bac, 
	abac.
\end{align*}
We put this algebra into a different context without using this information. 

\begin{rem}
	It is known that $\FK_3$ together with an appropriate comultiplication,
        counit and antipode is a Nichols algebra. This was first proved by
	Milinski and Schneider~\cite{MR1800714}. The primitive elements
        of this Nichols algebra are spanned by $a,b$ and $c$.
\end{rem}

The Fomin--Kirillov algebra $\FK_3$ first appeared in~\cite{MR1667680} to
provide a combinatorial tool to study the structure constants of Schubert
polynomials, and  independently in the paper~\cite{MR1800714} of Milinski and
Schneider, where pointed Hopf algebras with non-abelian coradical were studied.
It also appeared in the work of Majid and Raineri~\cite{MR1969778}, where
applications to physics where considered.  The cohomology of $\FK_3$ was
computed by \c{S}tefan and Vay in~\cite{MR3459024}. 

\begin{defn}
	\label{def:defFK3}
	For any $\alpha_1,\alpha_2\in\K$ let $\defFK_3(\alpha_1,\alpha_2)$ be the
	deformation of $\FK_3$ given by generators $a,b,c$ and relations 
	\begin{align*}
		&a^2-\alpha_1=b^2-\alpha_1=c^2 - \alpha_1=0,\\
 		&ca + bc + ab - \alpha_2=cb + ba + ac - \alpha_2=0.
	\end{align*}
\end{defn}

Note that the algebras $\defFK_3(\alpha_1,\alpha_2)$ are deformations of $\FK_3$
by definition. However, it is not a priori clear that they are PBW deformations.

For the rest of the section let $\alpha_1,\alpha_2\in \K$.

\begin{rem}
	A direct calculation shows that a Gr\"obner basis for the defining
        ideal of $\defFK_3(\alpha_1,\alpha_2)$
	is given by 
	\begin{align*}
	  &a^2 - \alpha_1=0, && b^2 - \alpha_1=0, && ca + bc + ab - \alpha_2=0,\\
	  &cb + ba + ac - \alpha_2=0, && c^2 - \alpha_1=0, && bab -aba - \alpha_2b +\alpha_2a=0.
	\end{align*}
	We will not use this Gr\"obner basis for our arguments.
\end{rem}

\begin{rem}
	\label{rem:FK3:action}
	The algebra $\defFK_3(\alpha_1,\alpha_2)$ is
	an $\Sym_3$-module algebra, where
	\begin{align*}
		&(12)\cdot a=-b, && (12)\cdot b=-a, && (12)\cdot c=-c,\\
		&(23)\cdot a=-a, && (23)\cdot b=-c, && (23)\cdot c=-b.
	\end{align*}
\end{rem}

\begin{lem} 
	\label{lem:FK3:urels}
	Let $u=a-b$, $v=b-c$ and $w=c-a$ in $\defFK_3(\alpha_1,\alpha_2)$. 
	Then 
	\begin{align*}
		ua&=-bu, & ub &=-au, & uc &=(c-a-b)u,\\
		uv &= vu, &uw &=wu, &vw &= wv.
	\end{align*}
	Moreover 
	\begin{align*}
		uv+vw+uw &= \alpha_2-3\alpha_1, 
		& u^3&=(3\alpha_1-\alpha_2)u, & u^2v+uv^2&=0, & uvw&=0. 
	\end{align*}
\end{lem}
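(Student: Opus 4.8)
The plan is to verify all the identities by direct computation inside $\defFK_3(\alpha_1,\alpha_2)$, arranging the argument so that the delicate degree-three identities all rest on one ``hidden'' cubic relation.

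First I would establish the three commutation rules for $u=a-b$ with the generators. From $a^2=b^2=c^2=\alpha_1$ one gets $ua=\alpha_1-ba=-bu$ and $ub=ab-\alpha_1=-au$ directly, while $uc=ac-bc$ becomes $(c-a-b)u$ after substituting for $ac$ and $bc$ via the two defining relations $ca+bc+ab=\alpha_2$ and $cb+ba+ac=\alpha_2$. These three identities amount to saying $uz=\sigma(z)u$ for every $z$ in the span of $a,b,c$, where $\sigma$ is the linear map with $\sigma(a)=-b$, $\sigma(b)=-a$, $\sigma(c)=c-a-b$; since $\sigma(u)=u$, $\sigma(v)=v$ and $\sigma(w)=w$, this already gives $uv=vu$ and $uw=wu$. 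The relation $vw=wv$ then follows from the $\Sym_3$-module algebra structure of Remark~\ref{rem:FK3:action}, whose action permutes $u,v,w$ (indeed $(12)$ fixes $u$ and swaps $v,w$, while $(23)$ fixes $v$ and swaps $u,w$), so $vw=wv$ is the image of $uv=vu$ under $(23)$. Thus $u,v,w$ pairwise commute, and as $u+v+w=0$ they generate a commutative subalgebra in which symmetric-function manipulations are legitimate.

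Next, for $uv+vw+uw$ I would compute $u^2=(a-b)^2=2\alpha_1-ab-ba$ together with the analogues for $v^2$ and $w^2$; adding them and using $ca+bc+ab=cb+ba+ac=\alpha_2$ gives $u^2+v^2+w^2=6\alpha_1-2\alpha_2$, after which $0=(u+v+w)^2=u^2+v^2+w^2+2(uv+vw+uw)$ yields the value $\alpha_2-3\alpha_1$ (here $\charK\ne2$ enters). For $u^3$, the key step is to extract the relation $bab-aba=\alpha_2(b-a)$ from the defining ideal without invoking the Gr\"obner basis: left-multiplying $ca+bc+ab=\alpha_2$ by $b$ gives $bab=\alpha_2b-\alpha_1c-bca$, right-multiplying the same relation by $a$ gives $bca=\alpha_2a-aba-\alpha_1c$, and combining the two eliminates both $bca$ and $\alpha_1c$. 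Then $u^3=u\,u^2=u(2\alpha_1-ab-ba)=3\alpha_1u+(bab-aba)=(3\alpha_1-\alpha_2)u$.

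Finally, for $u^2v+uv^2$ I would expand $u^2v+uv^2=(2\alpha_1-ab-ba)(b-c)+(a-b)(2\alpha_1-bc-cb)$, collect terms, and reduce the surviving cubic part $bac+bcb-bab-acb$ using $b(ac+cb)=\alpha_2b-\alpha_1a$ (from $cb+ba+ac=\alpha_2$), $acb=\alpha_2b-bab-\alpha_1c$ (right-multiply the same relation by $b$), and the identity $bab-aba=\alpha_2(b-a)$ obtained above; everything cancels, leaving $u^2v+uv^2=0$. Since $w=-(u+v)$ and $u,v$ commute, this gives at once $uvw=-uv(u+v)=-(u^2v+uv^2)=0$. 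I expect the only real difficulty to be the bookkeeping — choosing the right one-sided multiplications of the two defining relations by $a,b,c$ so that the degree-three monomials cancel — all of which is governed by the single cubic relation $bab-aba=\alpha_2(b-a)$, the degree-three member of the Gr\"obner basis, from which both $u^3$ and $u^2v+uv^2$ flow.
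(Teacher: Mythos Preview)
Your proof is correct and follows essentially the same route as the paper: direct verification of the identities from the defining relations, exploiting the $\Sym_3$-module algebra structure to transport relations. The organization differs in a few places worth noting. For $uv+vw+uw$ the paper computes $vw-u^2$ directly (which avoids dividing by $2$), whereas you pass through $(u+v+w)^2=0$ and the sum $u^2+v^2+w^2$; both are fine since $\charK\ne2$. More substantially, the paper proves $u^2v+uv^2=0$ and $uvw=0$ \emph{before} $u^3$, and then obtains $u^3=-u^2(v+w)=-u(uv+uw+vw)+uvw=(3\alpha_1-\alpha_2)u$ purely from the symmetric-function identities, never explicitly isolating $bab-aba=\alpha_2(b-a)$. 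You instead extract that cubic relation first and compute $u^3$ directly. Your $\sigma$-map packaging of the commutation rules $uz=\sigma(z)u$ is a nice touch the paper does not make explicit. One minor remark: in your reduction of $u^2v+uv^2$, the two substitutions $bac+bcb=\alpha_2 b-\alpha_1 a$ and $bab+acb=\alpha_2 b-\alpha_1 c$ already suffice; the relation $bab-aba=\alpha_2(b-a)$ is not actually used at that step.
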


\begin{proof}
	The first formula is obtained as follows:
	\begin{gather*}
		ua=(a-b)a=a^2-ba=b^2-ba=b(b-a)=-bu.
	\end{gather*}
	By acting with $(12)\in\Sym_3$ we obtain that $ub=-au$. Now
	\begin{align*}
    (c-a-b)u&=(c-a-b)(a-b)=ca-cb-\alpha_1+ab-ba+\alpha_1\\
		&=ca+ab-(cb+ba) =-bc+\alpha_2+ac-\alpha_2=(a-b)c=uc.
	\end{align*}
	We conclude that $uv=vu$ and $uw=wu$.
        By acting with $(23)$ on $uv=vu$ we obtain that $vw=wv$.

	From the definitions of $u$, $v$ and $w$ it follows that
	\[
		uv+vw+uw=vw-u^2=\alpha_2-3\alpha_1.
	\]
	From $cb+ba+ac-\alpha_2=0$ one obtains that 
	\[
		bcb+\alpha_1a+bac-\alpha_2b=0=\alpha_1c+bab+acb-\alpha_2b.
	\]
	Hence a direct calculations shows that 
	\begin{align*}
		u^2v+uv^2 &= \alpha_1a-\alpha_1c-bab+bac-acb+bcb=-\alpha_2b+\alpha_2b=0.
	\end{align*}
	Moreover 
	\[
		uvw=uv(-u-v)=-u^2v-uv^2=0.
	\]
	Finally 
	\begin {align*}
	u^3=-u^2(v+w)=-u(uv+uw+vw)+uvw=(3\alpha_1-\alpha_2)u.
	\end{align*}
	This completes the proof.
\end{proof}

\begin{lem}
	\label{lem:FK3:idempotents}
	Assume that $3\alpha_1-\alpha_2\ne0$. Then the elements
	\begin{align*}
		&e_1=\frac{1}{3\alpha_1-\alpha_2}((b+c)^2-(\alpha_1+\alpha_2))=\frac{1}{3\alpha_1-\alpha_2}(c-a)(b-a),\\
		&e_2=\frac{1}{3\alpha_1-\alpha_2}((a+c)^2-(\alpha_1+\alpha_2))=\frac{1}{3\alpha_1-\alpha_2}(c-b)(a-b),\\
		&e_3=\frac{1}{3\alpha_1-\alpha_2}((a+b)^2-(\alpha_1+\alpha_2))=\frac{1}{3\alpha_1-\alpha_2}(a-c)(b-c),
	\end{align*}
	form a set of orthogonal central idempotents of
	$\defFK_3(\alpha_1,\alpha_2)$. 
\end{lem}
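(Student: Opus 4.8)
The plan is to transport everything to the pairwise commuting elements $u=a-b$, $v=b-c$, $w=c-a$ of Lemma~\ref{lem:FK3:urels}. Writing $\delta=3\alpha_1-\alpha_2\neq 0$, the second expression for each $e_i$ (together with $uw=wu$, $uv=vu$, $vw=wv$) reads $e_1=-\delta^{-1}uw$, $e_2=-\delta^{-1}uv$, $e_3=-\delta^{-1}vw$; I would first note that this matches the first expression, which is just a rewriting of the defining relation $ca+bc+ab=\alpha_2$ and $a^2=\alpha_1$. Since $u,v,w$ commute, so do $e_1,e_2,e_3$, and since $uv+vw+uw=\alpha_2-3\alpha_1=-\delta$ by Lemma~\ref{lem:FK3:urels}, we immediately get $e_1+e_2+e_3=1$.

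Orthogonality is then essentially free: because $u,v,w$ commute and $uvw=0$, every product $e_ie_j$ with $i\neq j$ is a scalar multiple of a monomial in $u,v,w$ containing all three letters, e.g.\ $e_1e_2=\delta^{-2}u^2vw=\delta^{-2}u(uvw)=0$, and similarly $e_1e_3,e_2e_3=0$. Idempotency then follows formally, since $e_i=e_i(e_1+e_2+e_3)=e_i^2$ for each $i$.

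The substantive part is centrality, and here my plan is to prove that $u^2,v^2,w^2$ are central; from $u+v+w=0$ and $uv=vu$ one then gets $uv=\tfrac12(w^2-u^2-v^2)$, and likewise $uw,vw$, so these products are central (this is the one point where $\mathrm{char}\,\K\neq 2$ is used), hence so are the $e_i$. To see $u^2$ is central I would simply commute it past the generators using $ua=-bu$, $ub=-au$, $uc=(c-a-b)u$ from Lemma~\ref{lem:FK3:urels}: this gives $u^2a=-(ub)u=au^2$, $u^2b=-(ua)u=bu^2$, and $u^2c=(uc-ua-ub)u=cu^2$, using $uc-ua-ub=cu$. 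Centrality of $v^2$ and $w^2$ then comes from the $\Sym_3$-action of Remark~\ref{rem:FK3:action}, whose generators permute $u,v,w$ by algebra automorphisms and therefore preserve the center. The only mildly delicate point is the bookkeeping in these last commutations (in particular the identity $uc-ua-ub=cu$); otherwise the whole argument is formal manipulation on top of Lemma~\ref{lem:FK3:urels}.
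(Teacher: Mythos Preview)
Your argument is correct and uses the same ingredients as the paper's proof (Lemma~\ref{lem:FK3:urels} and the $\Sym_3$-action), but organized a little differently. The paper verifies by a direct calculation (not displayed) that $e_2$ is a central idempotent and then transports this to $e_1$ and $e_3$ via the $\Sym_3$-action, deducing orthogonality afterwards from Lemma~\ref{lem:FK3:urels}. You instead get orthogonality first from $uvw=0$, deduce idempotency formally from $e_1+e_2+e_3=1$, and obtain centrality by showing $u^2$ is central and then writing $uv=\tfrac12(w^2-u^2-v^2)$. Your route makes the idempotency step cleaner and makes explicit where $\charK\neq 2$ is used; the paper's route avoids the detour through the squares by commuting $e_2=\delta^{-1}vu$ directly past $a,b,c$, which is essentially the same computation you do for $u^2$.
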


\begin{proof}
        Using the first formulas for $e_1,e_2$ and $e_3$,
        we conclude from the definition of 
	$\defFK_3(\alpha_1,\alpha_2)$ that $1=e_1+e_2+e_3$.
        A direct calculation using
	Lemma~\ref{lem:FK3:urels} shows that $e_2$ is an idempotent that commutes
	with $a$, $b$ and $c$. Then $e_1=(12)\cdot e_2$ and $e_3=(23)\cdot e_2$ are
	central idempotents. From Lemma~\ref{lem:FK3:urels} it now follows that
	$e_1$, $e_2$ and $e_3$ are orthogonal.
\end{proof}

\begin{lem}
	\label{lem:FK3:Clifford}
	Assume that $3\alpha_1-\alpha_2\ne0$. Let $V$ be a vector space with basis
	$x_1,x_2$ and let $q\colon V\to\K$ be the quadratic form given by 
	\[
		q(\lambda_1x_1+\lambda_2x_2)=\alpha_1\lambda_1^2+(\alpha_2-\alpha_1)\lambda_1\lambda_2+\alpha_1\lambda_2^2.
	\]
	Then $e_3\defFK_3(\alpha_1,\alpha_2)\simeq C(V,q)$.
\end{lem}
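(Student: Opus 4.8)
Set $d=3\alpha_1-\alpha_2\ne0$. The plan is to produce a surjective algebra homomorphism $\varphi\colon C(V,q)\to e_3\defFK_3(\alpha_1,\alpha_2)$ sending $x_1\mapsto ae_3$ and $x_2\mapsto ce_3$, and then to conclude by checking that both algebras have dimension $4$.

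First I would observe that $e_3$ annihilates $u=a-b$. By Lemma~\ref{lem:FK3:idempotents} we have $e_3=\frac1d(a-c)(b-c)=-\frac1d wv$ with $v=b-c$ and $w=c-a$; since $u$, $v$, $w$ commute pairwise and $uvw=0$ by Lemma~\ref{lem:FK3:urels}, it follows that $ue_3=-\frac1d uwv=-\frac1d uvw=0$, i.e. $ae_3=be_3$. Hence the unital algebra $e_3\defFK_3(\alpha_1,\alpha_2)$ (with unit $e_3$), which a priori is generated by $ae_3,be_3,ce_3$, is already generated by $ae_3$ and $ce_3$ alone.

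Next I would check that $\varphi$ respects the defining relations of $C(V,q)$. As $e_3$ is a central idempotent and $a^2=c^2=\alpha_1$, we get $(ae_3)^2=\alpha_1 e_3=q(x_1)e_3$ and $(ce_3)^2=\alpha_1 e_3=q(x_2)e_3$. For the mixed relation, multiply $ca+bc+ab-\alpha_2=0$ by $e_3$ and use $ae_3=be_3$: then $abe_3=a(ae_3)=\alpha_1 e_3$ and $bce_3=ace_3$, so the relation becomes $(ac+ca)e_3=(\alpha_2-\alpha_1)e_3=2q_{12}\,e_3$. Thus $\varphi$ is a well-defined algebra homomorphism, and it is surjective by the previous paragraph.

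It remains to prove that $\varphi$ is injective, which amounts to $\dim e_3\defFK_3(\alpha_1,\alpha_2)=4=\dim C(V,q)$. By Remark~\ref{rem:FK3:action} the transpositions $(12)$ and $(23)$ carry $e_2$ to $e_1$ and to $e_3$, so the algebras $e_1\defFK_3(\alpha_1,\alpha_2)$, $e_2\defFK_3(\alpha_1,\alpha_2)$, $e_3\defFK_3(\alpha_1,\alpha_2)$ are mutually isomorphic and $\dim\defFK_3(\alpha_1,\alpha_2)=3\dim e_3\defFK_3(\alpha_1,\alpha_2)$; everything therefore reduces to the equality $\dim\defFK_3(\alpha_1,\alpha_2)=12$, and this is the one genuinely delicate point. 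Since $\defFK_3(\alpha_1,\alpha_2)$ is a deformation of the $12$-dimensional algebra $\FK_3$, its dimension is at most $12$ for all parameters, with equality at $(0,0)$, so by Proposition~\ref{pro:folklore} it suffices to establish equality on a Zariski dense set. On the dense open set where $(3\alpha_1-\alpha_2)(\alpha_1+\alpha_2)\ne0$ the form $B_q$ is nondegenerate (its Gram matrix has determinant a nonzero multiple of $(3\alpha_1-\alpha_2)(\alpha_1+\alpha_2)$), hence $C(V,q)$ is simple of dimension $4$ by Theorem~\ref{thm:Clifford}(2); each $e_i\defFK_3(\alpha_1,\alpha_2)$ is a quotient of $C(V,q)$, so is either $0$ or all of $C(V,q)$, and an explicit two-dimensional representation of $\defFK_3(\alpha_1,\alpha_2)$ in which $a$ and $b$ act by a common square root of $\alpha_1$ and $c$ by a suitable matrix shows $\defFK_3(\alpha_1,\alpha_2)\ne0$, so $1=e_1+e_2+e_3$ forces some $e_i\ne0$ and then, by the symmetry above, all of them; this gives $\dim\defFK_3(\alpha_1,\alpha_2)=12$ on the dense set. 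Once $\dim e_3\defFK_3(\alpha_1,\alpha_2)=4$ is known, the surjection $\varphi$ between two $4$-dimensional algebras is an isomorphism. (Alternatively, if one prefers, the equality $\dim\defFK_3(\alpha_1,\alpha_2)=12$ may be taken from the PBW basis result, Theorem~\ref{thm:FK3:basis}, and only the final dimension count is then needed.)
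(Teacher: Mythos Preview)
Your argument is correct, but it takes a considerably longer road than the paper does. The paper observes that not only does $e_3(a-b)=0$, but conversely $1-e_3=e_1+e_2=\tfrac1d(a-b)^2$ (immediate from the second formulas for $e_1,e_2$ in Lemma~\ref{lem:FK3:idempotents} together with $u+v+w=0$). Hence the two-sided ideals $(1-e_3)$ and $(a-b)$ of $\defFK_3(\alpha_1,\alpha_2)$ coincide, and $e_3\defFK_3(\alpha_1,\alpha_2)\cong \defFK_3(\alpha_1,\alpha_2)/(a-b)$. Adding $b=a$ to the five defining relations collapses them to $a^2=c^2=\alpha_1$ and $ac+ca=\alpha_2-\alpha_1$, which is precisely the presentation of $C(V,q)$; the extra relation $(a-c)^2=3\alpha_1-\alpha_2$ (coming from $e_3=1$) is then automatic. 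This gives the isomorphism directly, with no dimension count.

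Your route---surjection from $C(V,q)$, then the density trick of Proposition~\ref{pro:folklore} combined with simplicity of $C(V,q)$ and an explicit nonzero representation---also works, but it effectively imports the proofs of Proposition~\ref{pro:FK3:simple}, Theorem~\ref{thm:FK3:semisimple}, and the spanning part of Theorem~\ref{thm:FK3:basis} into the middle of this lemma. Two small cautions: the inequality $\dim\defFK_3(\alpha_1,\alpha_2)\le 12$ is not automatic from the word ``deformation'' alone---it comes from the surjection $\FK_3\twoheadrightarrow\mathrm{gr}\,\defFK_3(\alpha_1,\alpha_2)$, which you should say; and your parenthetical alternative of invoking Theorem~\ref{thm:FK3:basis} would be circular in the paper's logical order, since that theorem is proved using Theorem~\ref{thm:FK3:semisimple}, which in turn uses the present lemma.
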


\begin{proof}
	Lemma~\ref{lem:FK3:urels} implies that $e_3(a-b)=0$. Then the algebra
	$e_3\defFK_3(\alpha_1,\alpha_2)$ with unit $e_3$ is given by generators
	$e_3a,e_3c$ and relations
	\begin{gather}
		(e_3a)^2=(e_3c)^2=\alpha_1e_3,\\
		e_3ce_3a+e_3ae_3c=(\alpha_2-\alpha_1)e_3,\\
		\label{eq:consequence}(e_3a-e_3c)^2=(3\alpha_1-\alpha_2)e_3.
	\end{gather}
	Since~\eqref{eq:consequence} follows from the other equalities, the lemma
	holds.
\end{proof}

\begin{rem} \label{rem:FK3:degenerateBF}
	The symmetric bilinear form $B$ of the quadratic form $q$ in Lemma~\ref{lem:FK3:Clifford}
	satisfies
	$$ B(x_1,x_1)=B(x_2,x_2)=\alpha_1,\quad B(x_1,x_2)=\frac 1 2
	(\alpha_2-\alpha_1).$$
	It follows that $B$ is degenerate if and only if $\alpha_1^2-\frac 1
	4(\alpha_2-\alpha_1)^2=0$, that is,
	$(3\alpha_1-\alpha_2)(\alpha_1+\alpha_2)=0$.
\end{rem}

\begin{pro}
	\label{pro:FK3:simple}
	Assume that $(3\alpha_1-\alpha_2)(\alpha_1+\alpha_2)\ne0$.  Then
	$e_1\defFK_3(\alpha_1,\alpha_2)$,  $e_2\defFK_3(\alpha_1,\alpha_2)$ and
	$e_3\defFK_3(\alpha_1,\alpha_2)$ are simple algebras isomorphic to
	$\K^{2\times2}$.
\end{pro}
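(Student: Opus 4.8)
The plan is to reduce everything to the single corner algebra $e_3\defFK_3(\alpha_1,\alpha_2)$ by exploiting the $\Sym_3$-symmetry, and then to identify it with a Clifford algebra and invoke Theorem~\ref{thm:Clifford} together with Wedderburn's theorem.

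First I would record that, by Remark~\ref{rem:FK3:action}, $\defFK_3(\alpha_1,\alpha_2)$ is an $\Sym_3$-module algebra, and that this action permutes the idempotents of Lemma~\ref{lem:FK3:idempotents}: from the proof of that lemma one has $e_1=(12)\cdot e_2$ and $e_3=(23)\cdot e_2$, so $e_1,e_2,e_3$ lie in a single $\Sym_3$-orbit. Since every $g\in\Sym_3$ acts as an algebra automorphism of $\defFK_3(\alpha_1,\alpha_2)$ and the $e_i$ are central idempotents, $g$ restricts to an algebra isomorphism $e_i\defFK_3(\alpha_1,\alpha_2)\cong(g\cdot e_i)\defFK_3(\alpha_1,\alpha_2)$ of corner algebras. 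Hence the three algebras $e_1\defFK_3(\alpha_1,\alpha_2)$, $e_2\defFK_3(\alpha_1,\alpha_2)$, $e_3\defFK_3(\alpha_1,\alpha_2)$ are pairwise isomorphic, and it suffices to treat $e_3\defFK_3(\alpha_1,\alpha_2)$.

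Next, by Lemma~\ref{lem:FK3:Clifford} there is an algebra isomorphism $e_3\defFK_3(\alpha_1,\alpha_2)\cong C(V,q)$ with $\dim V=2$, and by Remark~\ref{rem:FK3:degenerateBF} the symmetric bilinear form $B_q$ associated with $q$ is nondegenerate precisely when $(3\alpha_1-\alpha_2)(\alpha_1+\alpha_2)\ne0$, which is exactly our hypothesis. As $\dim V=2$ is even and $q$ is nondegenerate, Theorem~\ref{thm:Clifford}(2) gives that $C(V,q)$ is simple, while $\dim_{\K}C(V,q)=2^{2}=4$. A finite-dimensional simple algebra over the algebraically closed field $\K$ is a full matrix algebra $\K^{n\times n}$ by Wedderburn's theorem, so $n^2=4$ forces $n=2$ and $e_3\defFK_3(\alpha_1,\alpha_2)\cong\K^{2\times2}$. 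Combined with the previous paragraph, this proves the proposition.

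I do not expect a genuine obstacle here: essentially the only point requiring a line of care is checking that the $\Sym_3$-action really carries one corner algebra isomorphically onto another — that is, that the $e_i$ form a single orbit and that the group acts by algebra automorphisms — after which the claim is immediate from the Clifford-algebra machinery already assembled in this section together with Wedderburn's theorem.
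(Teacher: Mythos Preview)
Your argument is correct and follows essentially the same route as the paper: reduce to $e_3\defFK_3(\alpha_1,\alpha_2)$ via the $\Sym_3$-action, identify it with a Clifford algebra through Lemma~\ref{lem:FK3:Clifford}, and conclude simplicity from Theorem~\ref{thm:Clifford} together with Remark~\ref{rem:FK3:degenerateBF}. Your explicit invocation of Wedderburn's theorem to pin down the isomorphism type $\K^{2\times 2}$ is a welcome addition that the paper leaves implicit.
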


\begin{proof}
	Using the group action one proves that these algebras are isomorphic.  So it
	suffices to prove that $e_3\defFK_3(\alpha_1,\alpha_2)$ is a simple algebra.
	By Lemma~\ref{lem:FK3:Clifford}, the latter algebra is isomorphic to a
	Clifford algebra. Thus the simplicity of $e_3\defFK_3(\alpha_1,\alpha_2)$
	follows from and Theorem~\ref{thm:Clifford} and
	Remark~\ref{rem:FK3:degenerateBF}.
\end{proof}

Next we discuss the deformation $\defFK_3(\alpha_1,3\alpha_1)$.

\begin{pro}
	\label{pro:FK3:nilpotent}
	Let $\LI$ denote the left ideal of $\defFK_3(\alpha_1,3\alpha_1)$ generated by
	$a-b$ and $b-c$. Then $\LI$ is a two-sided nilpotent ideal of
	$\defFK_3(\alpha_1,3\alpha_1)$. The quotient algebra
	$\defFK_3(\alpha_1,3\alpha_1)/\LI$ has dimension $2$ and is isomorphic to
	the algebra $\K[a]/(a^2-\alpha_1)$.
\end{pro}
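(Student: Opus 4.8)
The plan is to work inside $\defFK_3(\alpha_1,3\alpha_1)$ using the elements $u=a-b$, $v=b-c$, $w=c-a$ of Lemma~\ref{lem:FK3:urels}, specialized to the case $\alpha_2=3\alpha_1$. First I would observe that under this specialization the scalar $3\alpha_1-\alpha_2$ vanishes, so Lemma~\ref{lem:FK3:urels} yields the relations $u^3=0$, $uv+vw+uw=0$, $u^2v+uv^2=0$ and $uvw=0$, together with the commutation rules $ua=-bu$, $ub=-au$, $uc=(c-a-b)u$ and the mutual commutativity $uv=vu$, $vw=wv$, $uw=wu$. Since $w=-u-v$, the left ideal $\LI$ generated by $u$ and $v$ is the same as the one generated by $u$, $v$, $w$. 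The first task is to show $\LI$ is a two-sided ideal: it suffices to check $\LI a\subseteq\LI$, $\LI b\subseteq\LI$, $\LI c\subseteq\LI$, and for this the commutation rules do the job, since e.g. $ua=-bu\in\LI$ and similarly $va$, $ub$, $vb$, $uc$, $vc$ all lie back in $\LI$ after reordering; here one uses that the ambient algebra is generated by $a,b,c$ so right-multiplication by any element of $\defFK_3(\alpha_1,3\alpha_1)$ preserves $\LI$ once it is preserved by right-multiplication by the generators.

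Next I would prove nilpotency of $\LI$. Because $u,v,w$ pairwise commute and satisfy $w=-u-v$, the subalgebra they generate is a quotient of $\K[u,v]$; the relations $u^3=0$, $u^2v+uv^2=0$, $uvw=uv(-u-v)=-u^2v-uv^2=0$ (already recorded in the lemma) together with the symmetric versions obtained from the $\Sym_3$-action force every monomial in $u,v$ of sufficiently high degree to vanish — concretely one checks that all degree-$3$ monomials $u^3,u^2v,uv^2,v^3$ vanish, using $u^3=0$, the relation $u^2v+uv^2=0$ and its image under $(23)$ (which sends $u\mapsto w$, $v\mapsto -v$, so gives a relation among $w^2v$ and $wv^2$, i.e. a relation in $u,v$ after substituting $w=-u-v$), and similarly $v^3=0$. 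Hence the commutative subalgebra generated by $u,v$ has every element of degree $\ge 3$ equal to zero, so it is a nilpotent (in fact finite-dimensional local) ring. A general element of $\LI$ is a $\K$-linear combination of elements $mu$ and $mv$ with $m\in\defFK_3(\alpha_1,3\alpha_1)$; using the commutation rules one moves the $a,b,c$-letters in $m$ to the left, collecting a factor that is a product of $u$'s, $v$'s, $w$'s on the right. Thus $\LI^N$ is spanned by elements of the form $(\text{something})\cdot p(u,v)$ with $p$ a polynomial in $u,v$ of degree $\ge N$, and for $N\ge 3$ this is zero; so $\LI^3=0$.

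Finally, for the quotient: modulo $\LI$ we have $a\equiv b\equiv c$, so $\defFK_3(\alpha_1,3\alpha_1)/\LI$ is generated by the single image $\bar a$, and the relation $a^2-\alpha_1=0$ descends to $\bar a^2=\alpha_1$; moreover the relation $ca+bc+ab-3\alpha_1=0$ becomes $3\bar a^2-3\alpha_1=0$, which is already implied. Hence there is a surjection $\K[a]/(a^2-\alpha_1)\twoheadrightarrow \defFK_3(\alpha_1,3\alpha_1)/\LI$. To see it is an isomorphism I would exhibit a one-dimensional (hence two-dimensional over the ground, via $1$ and $\bar a$) representation or, more directly, note that $\{1,\bar a\}$ is linearly independent in the quotient: using the known $12$-dimensional basis of $\FK_3$ and the fact that $\defFK_3(\alpha_1,3\alpha_1)$ is a deformation one can lift a nonzero functional, or simply argue that $\K[a]/(a^2-\alpha_1)$ admits the algebra map $a\mapsto b\mapsto c$ back into no smaller quotient. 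The cleanest route is to display the algebra homomorphism $\defFK_3(\alpha_1,3\alpha_1)\to \K[y]/(y^2-\alpha_1)$ sending $a,b,c\mapsto y$, which is well-defined since all six defining relations are satisfied, which kills $u$ and $v$ and hence factors through $\defFK_3(\alpha_1,3\alpha_1)/\LI$, giving the inverse. The main obstacle is the nilpotency argument: one must be careful that moving generator-letters past $u$ and $v$ genuinely reduces to polynomial expressions in $u,v$ of controlled degree, and that the degree-$3$ vanishing statements in $\K[u,v]$ really follow from Lemma~\ref{lem:FK3:urels} and the $\Sym_3$-symmetry rather than needing fresh computation.
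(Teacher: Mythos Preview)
Your strategy is the paper's: use Lemma~\ref{lem:FK3:urels} for two-sidedness, the commutativity of $u,v$ together with $u^3=v^3=0$ for nilpotency, and identify the quotient by collapsing $a,b,c$ to one generator. The quotient argument via the explicit homomorphism $a,b,c\mapsto y$ into $\K[y]/(y^2-\alpha_1)$ is clean and slightly more explicit than the paper's ideal computation.

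Two details in the nilpotency step are wrong, however. First, $(23)$ fixes $v=b-c$ (since $(23)\cdot b=-c$, $(23)\cdot c=-b$), so it does \emph{not} send $v\mapsto -v$; acting with $(23)$ on $u^2v+uv^2=0$ just returns the same relation after substituting $w=-u-v$. The paper instead applies $(13)$, which sends $u\mapsto v$, yielding $v^3=0$ immediately from $u^3=0$. Second, the claim that all degree-$3$ monomials in $u,v$ vanish is false: already in $\FK_3=\defFK_3(0,0)$ one has $u^2v=abc-aba+bac\ne 0$ in the basis of Theorem~\ref{thm:FK3:basis}, and the PBW property transports this to $\defFK_3(\alpha_1,3\alpha_1)$. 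What \emph{is} true (and suffices) is that $u^3=v^3=0$ and $uv=vu$ force all degree-$4$ monomials in $u,v$ to vanish (e.g.\ $u^2v^2=(u^2v)v=-uv^3=0$ via $u^2v+uv^2=0$), so your reduction $\LI^N\subseteq \defFK_3\cdot(\text{degree-}N\text{ part of }\K[u,v])$ gives $\LI^4=0$ rather than $\LI^3=0$. The paper is content to observe $u^3=v^3=0$, $uv=vu$, and conclude nilpotency without naming the exponent.
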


\begin{proof}
	Let $u=a-b$ and $v=b-c$.  Lemma~\ref{lem:FK3:urels} implies that $\LI$ is a
	two-sided ideal. Moreover, $u^3=0$, and by acting with the transposition
	$(13)$ we also obtain that $v^3=0$.  Since $uv=vu$, it follows that $\LI$ is
	nilpotent.

	Adding $u$ and $v$ to the defining ideal of $\defFK_3(\alpha_1,3\alpha_1)$,
	one obtains the ideal $$ (b-a, c-a, a^2-\alpha_1, 3a^2-3\alpha_1). $$ This
	implies the last claim.
\end{proof}

Now we prove the main theorems of this section.

\begin{thm}
	\label{thm:FK3:semisimple}
	The algebra $\defFK_3(\alpha_1,\alpha_2)$ is
	semisimple if and only if 
	\[
	(3\alpha_1-\alpha_2)(\alpha_1+\alpha_2)\ne0.
	\]
	In this case $\defFK_3(\alpha_1,\alpha_2)\simeq\left(\K^{2\times2}\right)^3$. 
\end{thm}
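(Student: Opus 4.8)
The plan is to treat the two implications of the equivalence separately, reducing both to the Clifford algebra of Lemma~\ref{lem:FK3:Clifford} and to Theorem~\ref{thm:Clifford}.

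For the implication ``$\Leftarrow$'', suppose $(3\alpha_1-\alpha_2)(\alpha_1+\alpha_2)\ne0$. In particular $3\alpha_1-\alpha_2\ne0$, so Lemma~\ref{lem:FK3:idempotents} applies, and together with the identity $1=e_1+e_2+e_3$ obtained in its proof the idempotents $e_1,e_2,e_3$ decompose $\defFK_3(\alpha_1,\alpha_2)$ as a direct product of algebras $e_1\defFK_3(\alpha_1,\alpha_2)\times e_2\defFK_3(\alpha_1,\alpha_2)\times e_3\defFK_3(\alpha_1,\alpha_2)$. Since also $\alpha_1+\alpha_2\ne0$, Proposition~\ref{pro:FK3:simple} identifies each factor with $\K^{2\times2}$, so $\defFK_3(\alpha_1,\alpha_2)\simeq(\K^{2\times2})^3$; this is semisimple and also proves the last assertion.

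For the implication ``$\Rightarrow$'' I would argue by contraposition: assuming $(3\alpha_1-\alpha_2)(\alpha_1+\alpha_2)=0$, I exhibit a non-semisimple quotient of $\defFK_3(\alpha_1,\alpha_2)$, which suffices because a quotient of a finite-dimensional semisimple algebra (a product of fewer matrix algebras) is again semisimple. Take $(V,q)$ exactly as in Lemma~\ref{lem:FK3:Clifford}, with basis $x_1,x_2$; note that the form $q$, unlike the idempotent $e_3$, is defined for all $\alpha_1,\alpha_2$. A brief inspection of the five defining relations of $\defFK_3(\alpha_1,\alpha_2)$ shows that the assignment $a\mapsto x_1$, $b\mapsto x_1$, $c\mapsto x_2$ defines a surjective algebra homomorphism $\defFK_3(\alpha_1,\alpha_2)\to C(V,q)$: indeed $x_1^2=x_2^2=\alpha_1$, and the images of $ca+bc+ab$ and of $cb+ba+ac$ both equal $x_1x_2+x_2x_1+x_1^2=2B_q(x_1,x_2)+\alpha_1=\alpha_2$. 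By Remark~\ref{rem:FK3:degenerateBF} the hypothesis $(3\alpha_1-\alpha_2)(\alpha_1+\alpha_2)=0$ says precisely that $B_q$ is degenerate, so its radical $R$ is nonzero; Theorem~\ref{thm:Clifford}(1) then gives $\Rad C(V,q)=C(V,q)R\ne0$, and this ideal is proper since $\dim C(V,q)=4$. Hence $C(V,q)$ is not semisimple, and therefore neither is $\defFK_3(\alpha_1,\alpha_2)$.

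The only routine step is the relation check for the homomorphism to $C(V,q)$, and the only place needing a second glance is the degenerate extreme $\alpha_1=\alpha_2=0$, where $C(V,q)$ is the four-dimensional exterior algebra and $R=V$; but there too $0\ne\Rad C(V,q)\ne C(V,q)$, so the argument goes through. I do not expect any real obstacle here; the one mild idea is to notice that collapsing $a$ and $b$ lands one in the Clifford algebra already studied in Lemma~\ref{lem:FK3:Clifford}, which sidesteps the hypothesis $3\alpha_1-\alpha_2\ne0$ that was needed to define $e_3$. For the single value $\alpha_2=3\alpha_1$ one could alternatively use the nilpotent two-sided ideal $\LI$ of Proposition~\ref{pro:FK3:nilpotent}, but that route still requires $\LI\ne0$, which the homomorphism above supplies at once since $b-c\mapsto x_1-x_2\ne0$.
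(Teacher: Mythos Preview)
Your proof is correct. The ``if'' direction matches the paper exactly: decompose via the central idempotents of Lemma~\ref{lem:FK3:idempotents} and invoke Proposition~\ref{pro:FK3:simple}.

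For the ``only if'' direction you take a genuinely different route. The paper splits into two cases: when $3\alpha_1=\alpha_2$ it appeals to the nilpotent ideal $\LI$ of Proposition~\ref{pro:FK3:nilpotent}, and when $3\alpha_1\ne\alpha_2$ but $\alpha_1+\alpha_2=0$ it uses the idempotent decomposition together with Lemma~\ref{lem:FK3:Clifford} and Theorem~\ref{thm:Clifford}(1) to locate a nonzero radical in the direct factor $e_3\defFK_3(\alpha_1,\alpha_2)$. You instead construct a single surjection $\defFK_3(\alpha_1,\alpha_2)\to C(V,q)$ by collapsing $a$ and $b$, valid for \emph{all} $(\alpha_1,\alpha_2)$, and read off non-semisimplicity of the target from Remark~\ref{rem:FK3:degenerateBF} and Theorem~\ref{thm:Clifford}(1). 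This is cleaner: it treats both degenerate lines uniformly and avoids the case split. It also neatly sidesteps a small point the paper leaves implicit, namely that $\LI\ne0$ in Proposition~\ref{pro:FK3:nilpotent}; as you note, your homomorphism witnesses $b-c\ne0$ directly. The trade-off is that the paper's argument for $3\alpha_1\ne\alpha_2$ is intrinsic (it exhibits the radical inside the algebra itself), whereas yours only detects it through a quotient.
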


\begin{proof}
	By Proposition~\ref{pro:FK3:nilpotent}, 
	$\defFK_3(\alpha_1,3\alpha_1)$ is not semisimple. So we may assume that
	$3\alpha_1-\alpha_2\ne0$. We decompose
	$\defFK_3(\alpha_1,\alpha_2)$ as  
	\[
		\defFK_3(\alpha_1,\alpha_2)\simeq e_1\defFK_3(\alpha_1,\alpha_2)\oplus e_2\defFK_3(\alpha_1,\alpha_2)\oplus e_3\defFK_3(\alpha_1,\alpha_2),
	\]
	where $e_1,e_2,e_3$ are the central idempotents of
	Lemma~\ref{lem:FK3:idempotents}.  Now Proposition~\ref{pro:FK3:simple}
	implies that $\defFK_3(\alpha_1,\alpha_2)$ is semisimple and has dimension $12$ if
	$\alpha_1+\alpha_2\ne0$. In the case where $\alpha_1+\alpha_2=0$, the
	deformation $\defFK_3(\alpha_1,\alpha_2)$ is not semisimple by
	Lemma~\ref{lem:FK3:Clifford} and Theorem~\ref{thm:Clifford}(1).
\end{proof}

\begin{thm}
	\label{thm:FK3:basis}
	The algebra
	$\defFK_3(\alpha_1,\alpha_2)$ is a PBW deformation of $\FK_3$ and
	\begin{align}
		\label{eq:basisFK3}
		(a-b)^{n_1}a^{n_2}c^{n_3},\quad
		0\leq n_1\leq 2,\;
		0\leq n_2,n_3\leq 1,
	\end{align}
	is a basis of $\defFK_3(\alpha_1,\alpha_2)$.
\end{thm}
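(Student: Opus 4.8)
The plan is to deduce the theorem from Proposition~\ref{pro:folklore}, applied to the affine family $(\defFK_3(\alpha_1,\alpha_2))_{(\alpha_1,\alpha_2)\in\K^2}$. First I would check that this is indeed an affine family of deformations of $\FK_3$: by Definition~\ref{def:defFK3} the defining relations are exactly the homogeneous relations of $\FK_3$ corrected by scalars, i.e.\ by homogeneous elements of degree~$0$, so the required condition $c_{jl}=0$ for $\deg b_l\ge\deg r_j$ holds. To invoke Proposition~\ref{pro:folklore} with $d=12$ one needs two facts: (i) $\dim\defFK_3(\alpha_1,\alpha_2)\le 12$ for all $(\alpha_1,\alpha_2)\in\K^2$, and (ii) $\dim\defFK_3(\alpha_1,\alpha_2)=12$ on a Zariski-dense subset of $\K^2$. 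Fact~(ii) is immediate from Theorem~\ref{thm:FK3:semisimple}: on the dense open locus $(3\alpha_1-\alpha_2)(\alpha_1+\alpha_2)\ne0$ one has $\defFK_3(\alpha_1,\alpha_2)\cong(\K^{2\times2})^3$, which has dimension~$12$.

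The real work is fact~(i), which I would establish by showing that the $12$ elements in~\eqref{eq:basisFK3} span $\defFK_3(\alpha_1,\alpha_2)$. Put $u=a-b$ and let $W$ be their $\K$-linear span. Since $1\in W$, it suffices to check that $W$ is a left ideal, i.e.\ $aW$, $bW$, $cW\subseteq W$; as $b=a-u$ this amounts to $aW$, $uW$, $cW\subseteq W$. All the reduction rules needed are consequences of Lemma~\ref{lem:FK3:urels} and the quadratic relations, valid for every $\alpha_1,\alpha_2$: one has $a^2=c^2=\alpha_1$ and $u^3=(3\alpha_1-\alpha_2)u$; the element $u^2$ is central (for instance $u^2a=u(ua)=-u(bu)=-(ub)u=(au)u=au^2$, and likewise $u^2b=bu^2$, $u^2c=cu^2$); and $au=u^2-ua$, $cu=uc+u^2-2ua$, while using $ca+bc+ab=\alpha_2$ together with $b=a-u$ one gets $ca=(\alpha_2-\alpha_1)-ac+uc+u^2-ua$. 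With these identities one verifies directly, running over the exponents $0\le n_1\le 2$ and $0\le n_2,n_3\le1$, that left multiplication of $(a-b)^{n_1}a^{n_2}c^{n_3}$ by $a$, by $u$ and by $c$ again lands in $W$: the rule $u^3=(3\alpha_1-\alpha_2)u$ and the centrality of $u^2$ keep the $u$-degree at most~$2$, and $a^2=c^2=\alpha_1$ keep the $a$- and $c$-degrees at most~$1$. Hence $W=\defFK_3(\alpha_1,\alpha_2)$, so $\dim\defFK_3(\alpha_1,\alpha_2)\le 12$. (Equivalently, the six reduction rules above form a terminating rewriting system: each left-hand side is the leading term with respect to the degree-lexicographic order given by $u<a<c$, and the irreducible words are precisely the $12$ monomials of~\eqref{eq:basisFK3}.)

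With (i) and (ii) in hand, Proposition~\ref{pro:folklore} shows that $\defFK_3(\alpha_1,\alpha_2)$ is a PBW deformation of $\FK_3$; in particular $\dim\defFK_3(\alpha_1,\alpha_2)=\dim\FK_3=12$ for all $\alpha_1,\alpha_2$. Since the $12$ elements of~\eqref{eq:basisFK3} span this $12$-dimensional algebra, they are linearly independent, hence a basis. I expect the only genuinely laborious step to be the left-ideal verification in the second paragraph; the point to be careful about is that no reduction raises the $u$-degree above~$2$, which is exactly what $u^3=(3\alpha_1-\alpha_2)u$ and the centrality of $u^2$ guarantee. Everything else is assembling Theorem~\ref{thm:FK3:semisimple} and Proposition~\ref{pro:folklore}.
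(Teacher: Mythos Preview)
Your proposal is correct and follows essentially the same route as the paper: derive the reduction rules $au=u^2-ua$, $cu=u^2-2ua+uc$, $ca=(\alpha_2-\alpha_1)+u^2-ua+uc-ac$ together with $a^2=c^2=\alpha_1$ and $u^3=(3\alpha_1-\alpha_2)u$, conclude that the twelve monomials in~\eqref{eq:basisFK3} span, and then combine Theorem~\ref{thm:FK3:semisimple} with Proposition~\ref{pro:folklore} to force dimension~$12$. The paper phrases the spanning argument purely in terms of the lexicographic order with $u<a<c$ (your parenthetical remark), whereas you frame it as closure of $W$ under left multiplication, aided by the centrality of $u^2$; these are the same computation.
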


\begin{proof}
	Let $u=a-b$.  Consider the lexicographic ordering on the words in the letters
	$u$, $a$ and $c$, induced by $u<a<c$. Definition~\ref{def:defFK3} and
	Lemma~\ref{lem:FK3:urels} imply that $u^3=(3\alpha_1-\alpha_2)u$ and
	\begin{align} \label{eq:D3newrels}
    \begin{aligned}
		au &= u^2-ua,\\
		cu &= u^2-2ua+uc,\\
		ca &= \alpha_2-\alpha_1+u^2-ua+uc-ac.
    \end{aligned}
	\end{align}
	Hence the monomials $au,cu,ca$ can be written as linear combinations of
	lexicographically smaller ordered monomials.
	Therefore~\eqref{eq:basisFK3} spans $\defFK_3(\alpha_1,\alpha_2)$.
	By Theorem~\ref{thm:FK3:semisimple} and
	Proposition~\ref{pro:folklore},
	$\defFK_3(\alpha_1,\alpha_2)$ is a PBW deformation of $\FK_3$ of dimension
	$12$ for all $\alpha_1,\alpha_2\in \K$. Thus the $12$ elements in
	\eqref{eq:basisFK3} form a basis of $\defFK_3(\alpha_1,\alpha_2)$.
\end{proof}

\begin{rem}
  (1)
  In the proof of Theorem~\ref{thm:FK3:basis} we identified three quadratic
  relations \eqref{eq:D3newrels} of $\defFK_3(\alpha_1,\alpha_2)$.
  Together with the relations $a^2=c^2=\alpha_1$ these form a set of defining
  relations. Indeed, $\defFK_3(\alpha_1,\alpha_2)$ is defined by three generators
  and five linearly independent quadratic relations for them. Using Gr\"obner
  basis calculations it is possible to check that for any $\alpha_1,\alpha_2\in \K$,
  none of these relations is superfluous.

  (2) {}From the PBW basis of $\FK_3=\defFK_3(0,0)$ in Theorem~\ref{thm:FK3:basis}
  one recovers quickly
  that the Hilbert series of $\FK_3$ is the polynomial $(1+t)^2(1+t+t^2)$.
  Since the Hilbert series of the Fomin-Kirillov algebras $\FK_4$ and $\FK_5$
  have a similar form, we expect that also the latter have a PBW basis.
\end{rem}

\begin{cor}
	\label{cor:FK3:pi}
	Any polynomial identity of $\left(\K^{2\times2}\right)^3$ is a polynomial
	identity of $\defFK_3(\alpha_1,\alpha_2)$. 
\end{cor}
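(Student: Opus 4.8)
The plan is to reduce the statement to the semisimple case, where the previous theorems apply verbatim, and then transfer the conclusion to the remaining (non-semisimple) parameters by a Zariski-density argument on the parameter space $\K^2$. First I would observe that polynomial identities are preserved under subalgebras and quotients, and under taking products in the sense that an identity of $\left(\K^{2\times2}\right)^3$ is in particular an identity of $\K^{2\times 2}$; more to the point, by Theorem~\ref{thm:FK3:semisimple} we have an honest isomorphism $\defFK_3(\alpha_1,\alpha_2)\simeq\left(\K^{2\times2}\right)^3$ whenever $(3\alpha_1-\alpha_2)(\alpha_1+\alpha_2)\ne0$, so for such parameters the corollary is immediate. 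It remains to handle the locus $(3\alpha_1-\alpha_2)(\alpha_1+\alpha_2)=0$.

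For the remaining parameters I would use the PBW basis from Theorem~\ref{thm:FK3:basis}: the $12$ monomials $(a-b)^{n_1}a^{n_2}c^{n_3}$ with $0\le n_1\le 2$, $0\le n_2,n_3\le1$ form a basis of $\defFK_3(\alpha_1,\alpha_2)$ for \emph{every} $(\alpha_1,\alpha_2)\in\K^2$. Hence the structure constants of $\defFK_3(\alpha_1,\alpha_2)$ with respect to this fixed basis are polynomials in $\alpha_1,\alpha_2$ (one checks this directly from the rewriting rules \eqref{eq:D3newrels} and $u^3=(3\alpha_1-\alpha_2)u$, or simply from the fact that the defining relations depend polynomially on the parameters and the reduction process is parameter-uniform). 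Now fix any noncommutative polynomial $f(z_1,\dots,z_k)$ that is a polynomial identity of $\left(\K^{2\times2}\right)^3$. Evaluating $f$ on arbitrary elements of $\defFK_3(\alpha_1,\alpha_2)$ expanded in the PBW basis, each coefficient of the result (in that basis) is a polynomial in $\alpha_1,\alpha_2$ and in the coordinates of the inputs. By the semisimple case these polynomials vanish on the Zariski-dense set $\{(3\alpha_1-\alpha_2)(\alpha_1+\alpha_2)\ne0\}$, hence vanish identically; therefore $f$ is a polynomial identity of $\defFK_3(\alpha_1,\alpha_2)$ for all $(\alpha_1,\alpha_2)$.

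The only point requiring a little care — and the one I would spell out — is the claim that the structure constants are polynomial in the parameters; this is where one must genuinely invoke that \eqref{eq:basisFK3} is a basis for \emph{all} parameter values (not just generic ones), which is exactly the content of Theorem~\ref{thm:FK3:basis}, so there is no real obstacle. An alternative packaging that avoids even this: the family $\left(\defFK_3(\alpha_1,\alpha_2)\right)$ is an affine family of deformations, its dimension is constantly $12$, and the evaluation of $f$ defines a morphism of affine varieties that vanishes on a dense open set, hence everywhere. Either way the argument is short; I would present the structure-constant version since it is the most transparent.
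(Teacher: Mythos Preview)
Your proposal is correct and follows essentially the same approach as the paper: the paper's proof simply says ``It follows from Theorem~\ref{thm:FK3:semisimple} and an argument similar to Proposition~\ref{pro:folklore},'' which is precisely the Zariski-density argument you spell out (semisimple case gives the identity on a dense open set of parameters, and polynomial dependence of the structure constants via the uniform PBW basis of Theorem~\ref{thm:FK3:basis} propagates it everywhere). Your write-up is more detailed than the paper's one-line reference, but the underlying idea is identical.
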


\begin{proof}
	It follows from Theorem~\ref{thm:FK3:semisimple} and an argument similar to
	Proposition~\ref{pro:folklore}.		
\end{proof}

The following result classifies PBW deformations of $\FK_3$. These deformations
already appeared in~\cite[Theorem 6.2]{MR3119229} and were used to obtain the
classification of finite-dimensional pointed Hopf algebras with coradical
isomorphic to $\Sym_3$, see also~\cite{MR2766176}. 

\begin{thm}
	\label{thm:defFK3}
	Each PBW deformation of $\FK_3$ in the
	category of $\SG 3$-modules is of the form $\defFK_3(\gamma_1,\gamma_2)$,
	$\gamma_1,\gamma_2\in\K$.
\end{thm}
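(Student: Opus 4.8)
The plan is to classify all PBW deformations of $\FK_3$ in the category of $\SG_3$-modules by analysing which non-homogeneous correction terms can be added to the defining relations while staying compatible with the group action and preserving the Hilbert series. First I would fix the $\SG_3$-module structure: $\FK_3$ is generated by $a,b,c$ spanning a three-dimensional $\SG_3$-module $V$ (the standard representation twisted by sign, as in Remark~\ref{rem:FK3:action}), and the defining relations $a^2,b^2,c^2$ and $ca+bc+ab$, $cb+ba+ac$ span an $\SG_3$-submodule $R$ of $V\otimes V$. A PBW deformation in the category replaces each relation $r_j$ by $r_j - \varphi(r_j)$, where $\varphi\colon R \to \FK_3^{<2} = \K\cdot 1 \oplus V$ is an $\SG_3$-module map (the filtered-to-graded data), and I would argue that any PBW deformation arises this way up to the obvious identification of generators.

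Next I would decompose $R$ into $\SG_3$-isotypic components and compute $\Hom_{\SG_3}(R, \K\oplus V)$. The submodule spanned by $a^2,b^2,c^2$ is the permutation module on three points tensored with the sign character (since $(12)\cdot a^2 = b^2$ etc.), which decomposes as trivial $\oplus$ standard; the submodule spanned by the two cubic-type relations $ca+bc+ab$ and $cb+ba+ac$ is two-dimensional and one checks it is the standard representation (or its sign twist). Since $\FK_3^{<2} = \K \oplus V$ with $\K$ trivial and $V$ the sign-twisted standard representation, Schur's lemma tells me exactly which components of $R$ can map nontrivially where: the trivial summand of the quadratic part can hit $\K$, giving the parameter $\alpha_1$ attached symmetrically to $a^2,b^2,c^2$; and the standard-type summand of the cubic part can hit $V$, but I would need to check whether that target is actually the \emph{constants} $\K$ or a copy of $V$ — working out the isotypic types carefully shows the cubic relations can only pick up a scalar, giving the single parameter $\alpha_2$ attached symmetrically to $ca+bc+ab$ and $cb+ba+ac$. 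The standard summand inside the quadratic part and any $V$-valued maps turn out to be forbidden either by $\SG_3$-equivariance mismatch or by the grading/degree constraint $c_{jl}=0$ when $\deg b_l \ge \deg r_j$ (degree-one corrections to a degree-two relation are allowed, but the relevant Hom space vanishes).

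Having shown that the space of admissible deformation data is at most two-dimensional and spanned by the $(\alpha_1,\alpha_2)$ directions, I would conclude that every PBW deformation is \emph{of the form} $\defFK_3(\gamma_1,\gamma_2)$, and Theorem~\ref{thm:FK3:basis} already guarantees that each such algebra genuinely is a PBW deformation, so the family is exactly parametrised. The main obstacle I anticipate is the bookkeeping of the $\SG_3$-isotypic decomposition of $R \subseteq V^{\otimes 2}$ and of the space of equivariant linear maps $R \to \K\oplus V$: one must be careful that $V$ here is the \emph{sign-twisted} standard representation, so that $V\otimes V$ contains the trivial representation with a specific multiplicity and the various two-dimensional pieces are genuinely isomorphic (or not) as $\SG_3$-modules — getting a sign wrong would either spuriously allow a third parameter or spuriously forbid $\alpha_2$. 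A secondary technical point is justifying rigorously that the associated-graded/Hilbert-series condition forces the correction map to factor through $R$ as a well-defined $\SG_3$-morphism (rather than merely being defined on a spanning set), which is where one invokes that $\mathrm{gr}\,D \cong \FK_3$ together with the universal property of the free algebra modulo the correction; this is routine but should be stated cleanly.
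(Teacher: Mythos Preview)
Your framework is reasonable, but there is a genuine gap: the space $\Hom_{\SG_3}(R,\K\oplus V)$ is four-dimensional, not two-dimensional, and the extra two directions are not ruled out by either equivariance or the degree bound. Concretely, with the action in Remark~\ref{rem:FK3:action} one has $V\cong \mathrm{sgn}\oplus\mathrm{std}$, the span of $a^2,b^2,c^2$ is the permutation module $\mathrm{triv}\oplus\mathrm{std}$, and the span of $ab+bc+ca$ and $ba+cb+ac$ is $\mathrm{triv}\oplus\mathrm{sgn}$ (transpositions swap the two elements, the $3$-cycle fixes both). Hence
\[
\dim\Hom_{\SG_3}(R,\K\oplus V)=\underbrace{2}_{\mathrm{triv}\to\mathrm{triv}}+\underbrace{1}_{\mathrm{std}\to\mathrm{std}}+\underbrace{1}_{\mathrm{sgn}\to\mathrm{sgn}}=4.
\]
The two trivial-to-trivial maps are your $\alpha_1,\alpha_2$. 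The remaining two give equivariant families such as $a^2=\alpha_1+\lambda(b-c)$ (and its $\SG_3$-orbit) and $ab+bc+ca=\alpha_2+\mu(a+b+c)$; both satisfy the degree constraint $c_{jl}=0$ for $\deg b_l\ge\deg r_j$. So your sentence ``The standard summand inside the quadratic part and any $V$-valued maps turn out to be forbidden either by $\SG_3$-equivariance mismatch or by the grading/degree constraint'' is not correct.

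What actually kills $\lambda$ and $\mu$ is the PBW condition $\mathrm{gr}\,D\cong\FK_3$ itself, i.e.\ an overlap/Jacobi-type obstruction rather than a representation-theoretic one. The paper carries this out by hand: one observes that in the candidate deformation the element $c^2+\lambda(a-b)+\alpha_1$ is zero, so its commutator with $c$ must vanish; reducing the top-degree part using the other relations gives $\lambda(ab-ba)$ plus lower terms, and since $ab-ba\ne 0$ in $\FK_3$ this forces $\lambda=0$. A similar commutator argument (involving $xa-bx$ for $x=ab+bc+ca+\mu(a+b+c)+\alpha_2$) forces $\mu=0$. Your outline would be salvageable if you replaced the incorrect equivariance claim with this kind of explicit PBW/overlap computation (or, more abstractly, with the Braverman--Gaitsgory / Berger--Ginzburg Jacobi conditions for the two $V$-valued maps), but as written the crucial step is missing.
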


\begin{proof}
	Let $\defFK$ be a PBW deformation of $\FK_3$. 
	Theorem~\ref{thm:FK3:basis} for $\alpha_1=\alpha_2=0$
	implies that~\eqref{eq:basisFK3} is a basis of $\defFK$. 
	Since $a^2=0$ in $\FK_3$, there exist $\lambda_1,\dots,\lambda_4\in\K$ such that 
	\[
		a^2+\lambda_1a+\lambda_2b+\lambda_3c+\lambda_4=0
	\]
	in $\defFK$.  By acting with the transposition $(2\,3)$, see
	Remark~\ref{rem:FK3:action},
	one obtains that
	\[
	2\lambda_1a+(\lambda_2+\lambda_3)(b+c)=0
	\]
	and hence $\lambda_1=\lambda_2+\lambda_3=0$. By acting with $(1\,2)$ and
	$(1\,3)$ it follows that 
	\[
		b^2+\lambda_2(c-a)+\lambda_4=0,\quad
		c^2+\lambda_2(a-b)+\lambda_4=0.
	\]
	Since $ab+bc+ca=0$ in $\FK_3$, there exist $\lambda_5,\dots,\lambda_8\in\K$ such that 
	\[
		ab+bc+ca+\lambda_5a+\lambda_6b+\lambda_7c+\lambda_8=0
	\]
	in $\defFK$. 
	By acting with the transpositions $(2\,3)$ and $(1\,3)$ we obtain that 
	\begin{align*}
		&ac+cb+ba-\lambda_5a-\lambda_6c-\lambda_7b+\lambda_8=0,\\
		&cb+ba+ac-\lambda_5c-\lambda_6b-\lambda_7a+\lambda_8=0.
	\end{align*}
	This implies that $\lambda_5=\lambda_6=\lambda_7$. Now the commutator of
        $c^2+\lambda_2(a-b)+\lambda_4$ and $c$ in $\defFK$ becomes
        $\lambda_2(ab-ba)$ up to linear and constant terms and hence $\lambda_2=0$.
        Finally, let
        $x=ab+bc+ca+\lambda_5(a+b+c)+\lambda_8$. Then
        by using other quadratic relations it follows that
        $xa-bx=aba-bab+\lambda_5(ca-bc)$
        up to linear and constant terms.
        Let $y=xa-bx$. Then
        $$y-(1\,2)\cdot y=\lambda_5(ca-bc-cb+ac)$$ 
				up to linear and constant terms. Hence $\lambda_5=0$ and
	$\defFK=\defFK_3(-\lambda_4,-\lambda_8)$.
\end{proof}

We now describe the cases where $\defFK_3(\alpha_1,\alpha_2)$ is not
semisimple.

\begin{pro}
	\label{pro:preprojective}
	Assume that  
	$\alpha_1\ne0$. 
	Then the deformation 
	$\defFK_3(\alpha_1,-\alpha_1)$ is isomorphic to the product of three copies of the 
	preprojective algebra of the Dynkin quiver of type $A_2$.
\end{pro}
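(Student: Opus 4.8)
The plan is to use the block decomposition already available for $\defFK_3(\alpha_1,\alpha_2)$ together with the Clifford-algebra description of the blocks, and to recognise the resulting Clifford algebra as the preprojective algebra of $A_2$ via a Peirce decomposition.

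Since $\alpha_2=-\alpha_1$ and $\alpha_1\ne0$, we have $3\alpha_1-\alpha_2=4\alpha_1\ne0$ because $\alpha_1\ne0$ and $\mathrm{char}\,\K\ne2$, so Lemma~\ref{lem:FK3:idempotents} gives orthogonal central idempotents $e_1,e_2,e_3$ with $e_1+e_2+e_3=1$ and hence
\[
  \defFK_3(\alpha_1,-\alpha_1)\simeq e_1\defFK_3(\alpha_1,-\alpha_1)\oplus e_2\defFK_3(\alpha_1,-\alpha_1)\oplus e_3\defFK_3(\alpha_1,-\alpha_1).
\]
By the proof of Lemma~\ref{lem:FK3:idempotents}, the action of $\Sym_3$ from Remark~\ref{rem:FK3:action}, which is by algebra automorphisms, permutes the $e_i$; therefore the three blocks are isomorphic as algebras, and it suffices to identify $e_3\defFK_3(\alpha_1,-\alpha_1)$ with the preprojective algebra $\Pi$ of the $A_2$ quiver.

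By Lemma~\ref{lem:FK3:Clifford}, $e_3\defFK_3(\alpha_1,-\alpha_1)\simeq C(V,q)$ with $q(\lambda_1x_1+\lambda_2x_2)=\alpha_1(\lambda_1-\lambda_2)^2$; equivalently $x_1^2=x_2^2=\alpha_1$ and $x_1x_2+x_2x_1=\alpha_2-\alpha_1=-2\alpha_1$. I would replace the generators $x_1,x_2$ by $z=x_1+x_2$ and $t=x_1$; then a short computation gives $z^2=\alpha_1+\alpha_2=0$, $t^2=\alpha_1$ and $zt+tz=\alpha_1+\alpha_2=0$, so
\[
  C(V,q)=\K\langle z,t\rangle/(z^2,\ t^2-\alpha_1,\ zt+tz),
\]
a $4$-dimensional algebra with basis $1,t,z,zt$. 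Now fix $\beta\in\K$ with $\beta^2=\alpha_1$ (possible as $\K$ is algebraically closed and $\alpha_1\ne0$) and set $f_\pm=\tfrac12(1\pm\beta^{-1}t)$, so that $f_+,f_-$ are orthogonal idempotents summing to $1$ and $t=\beta(f_+-f_-)$. Using $tz=-zt$ one checks $f_+z=zf_-$ and $f_-z=zf_+$; consequently the elements $\xi=zf_-=f_+zf_-$ and $\eta=zf_+=f_-zf_+$ satisfy $z=\xi+\eta$, $zt=\beta(\eta-\xi)$, and $\xi^2=\eta^2=\xi\eta=\eta\xi=0$. Hence $\{f_+,f_-,\xi,\eta\}$ is a basis of $C(V,q)$, and $C(V,q)$ is the path algebra of the quiver with vertices $f_+,f_-$ and arrows $\xi\colon f_+\to f_-$, $\eta\colon f_-\to f_+$ subject to $\xi\eta=\eta\xi=0$, i.e.\ $C(V,q)\simeq\Pi$. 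Together with the first step this yields $\defFK_3(\alpha_1,-\alpha_1)\simeq\Pi\times\Pi\times\Pi$.

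I do not expect a genuine obstacle: the whole argument rests on picking the right generators $z,t$ of the Clifford algebra and carrying out the Peirce decomposition with respect to $f_\pm$. The two points that need a little attention are (i) making sure the $\Sym_3$-action provides \emph{algebra} isomorphisms between the three blocks (which it does, being by automorphisms and permuting the $e_i$), so that the conclusion is an isomorphism of algebras and not merely of vector spaces; and (ii) checking that the relations $\xi\eta=\eta\xi=0$ are exactly the defining (Gelfand--Ponomarev) relations of the preprojective algebra attached to the $A_2$ quiver.
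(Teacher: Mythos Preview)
Your proof is correct and follows exactly the route the paper intends: the paper's proof consists of the single sentence ``It follows directly from Lemma~\ref{lem:FK3:Clifford}'', and you have spelled out what is left implicit there---the block decomposition from Lemma~\ref{lem:FK3:idempotents}, the $\Sym_3$-permutation of the blocks, and the explicit Peirce decomposition identifying the degenerate Clifford algebra with the preprojective algebra of $A_2$.
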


\begin{proof}
	It follows directly from Lemma~\ref{lem:FK3:Clifford}. 
\end{proof}

\begin{pro}
	\label{pro:coinvariant}
	Assume that  
	$\alpha_1\ne0$. Then the deformation $\defFK_3(\alpha_1,3\alpha_1)$ can be
	presented as a quiver with relations in the following way: The quiver has two
	vertices $1$ and $2$, there are two arrows $ u_{12} , v_{12}$ from $1$ to $2$ and two
	arrows $ u_{21}, v_{21}$ from $2$ to $1$.  The relations are those of the coinvariant
	ring of $\Sym_3$, i.e. 
	\begin{align*}
		u_{ij} v_{ji} = v_{ij}u_{ji}, &&
		u_{ij} v_{ji}+ v_{ij}  w_{ji}+ u_{ij} w_{ji}=0, && 
		u_{ij}  v_{ji} w_{ij}=0
	 \end{align*}
	for all $i,j\in\{1,2\}$ with $i+j=3$, 
	where $ w_{kl}=- u_{kl}- v_{kl}$ for all $k,l\in\{1,2\}$ with $k+l=3$.
\end{pro}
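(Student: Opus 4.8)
The plan is to identify $A:=\defFK_3(\alpha_1,3\alpha_1)$ with the quiver algebra $B$ described in the statement, by producing a surjective homomorphism $\phi\colon B\to A$ and checking $\dim B\le 12=\dim A$. Since $\alpha_1\ne 0$ and $\K$ is algebraically closed, fix $\beta\in\K$ with $\beta^2=\alpha_1$; then, because $a^2=\alpha_1$, the elements $f_1=\tfrac12(1+\beta^{-1}a)$ and $f_2=\tfrac12(1-\beta^{-1}a)$ are orthogonal idempotents of $A$ with $f_1+f_2=1$, and they will serve as the two vertices. Writing $u=a-b$, $v=b-c$, $w=c-a=-u-v$ as in Lemma~\ref{lem:FK3:urels}, I define $\phi$ by $f_i\mapsto f_i$ and by sending the arrows to the off-diagonal ``blocks'' $u_{12}\mapsto f_2uf_1$, $u_{21}\mapsto f_1uf_2$, $v_{12}\mapsto f_2vf_1$, $v_{21}\mapsto f_1vf_2$; the combination $w_{ij}=-u_{ij}-v_{ij}$ then goes to $f_\bullet wf_\bullet$, as required.

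The facts from $A$ that drive the argument come from Lemma~\ref{lem:FK3:urels} with $\alpha_2=3\alpha_1$: one has $uv=vu$, $uv+vw+uw=0$, $uvw=0$, $u^3=v^3=w^3=0$, and $u^2v+uv^2=0$ together with its $\Sym_3$-conjugates. Moreover $u^2$ commutes with $a$, $b$, $c$ (a short computation from these relations), hence $u^2$ --- and by the $\Sym_3$-action of Remark~\ref{rem:FK3:action} also $v^2$, $w^2$ --- is central, so $uv=\tfrac12(w^2-u^2-v^2)$ is central and $u^2v^2=-u^3v=0$. Finally, an easy induction gives $\LI=Au+Av$, $\LI^2=Au^2+Auv+Av^2$, $\LI^3=Au^2v$ and $\LI^4=0$, where $\LI$ is the nilpotent ideal of Proposition~\ref{pro:FK3:nilpotent}; in particular $f_iuf_i=\pm\tfrac1{2\beta}u^2f_i\in\LI^2$ for each $i$, and likewise for $v$ and $w$. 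Granting this, I check that $\phi$ kills the defining relations of $B$ by applying the ``corner projections'' $x\mapsto f_ixf_j$ to the $A$-identities $uv=vu$, $uv+vw+uw=0$, $uvw=0$: the diagonal projections $f_1(-)f_1$, $f_2(-)f_2$ of the first two yield, once the cross-terms (each carrying a factor $f_kuf_k$ or $f_kvf_k$ and hence lying in $\LI^2\cdot\LI^2\subseteq\LI^4=0$) are dropped, exactly $u_{ij}v_{ji}=v_{ij}u_{ji}$ and $u_{ij}v_{ji}+v_{ij}w_{ji}+u_{ij}w_{ji}=0$; and the off-diagonal projections $f_1(-)f_2$, $f_2(-)f_1$ of $uvw=0$ yield $u_{ij}v_{ji}w_{ij}=0$. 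Since $f_iu^2f_i=(f_iuf_j)(f_juf_i)$ with $j\ne i$, the element $u$ --- and by symmetry $v$, hence $a$, $b$, $c$ --- lies in the subalgebra generated by the $f_i$ and the arrow-images, so $\phi$ is onto.

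It remains to prove $\dim B\le 12$; with $\dim A=12$ (Theorem~\ref{thm:FK3:basis}) this forces $\phi$ to be an isomorphism. I would do this by a normal-form count of the paths in $B$. In each corner the length-$2$ paths span a space of dimension $\le2$: the relation $u_{ij}v_{ji}=v_{ij}u_{ji}$ and the consequence $v_{ij}v_{ji}=-u_{ij}u_{ji}-u_{ij}v_{ji}$ of the second relation reduce a spanning set to $\{u_{ji}u_{ij},\,u_{ji}v_{ij}\}$; so path-lengths $0,1,2$ contribute at most $2+4+4$. Right-multiplying the first and third relations by the arrows then caps the length-$3$ part at dimension $\le2$. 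The step I expect to be the main obstacle is showing that everything of length $\ge4$ vanishes: right-multiplying the relation $u_{21}v_{12}w_{21}=0$ by $v_{12}$ and by $u_{12}$ and simplifying with the length-$2$ relations collapses these to $(u_{21}u_{12})(u_{21}v_{12})=0$ and $(u_{21}u_{12})^2=0$ in $f_1Bf_1$ (and symmetrically in $f_2Bf_2$), and these kill all four length-$4$ corner paths and, inductively, everything of greater length. Hence $\dim B\le 2+4+4+2=12$, so $\phi$ is an isomorphism; the displayed relations are precisely those of the coinvariant algebra of $\Sym_3$, which proves the proposition. (Throughout, the $(1,2)\leftrightarrow(2,1)$-symmetric halves of the various statements follow formally from the automorphism $x\mapsto-x$ of $A$, which interchanges $f_1$ and $f_2$.)
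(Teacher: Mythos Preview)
Your proof is correct and follows essentially the same strategy as the paper's: pick a pair of orthogonal idempotents coming from a square root of $\alpha_1$, define the arrows as the off-diagonal Peirce pieces of $u$ and $v$, and verify the coinvariant-ring relations using the identities of Lemma~\ref{lem:FK3:urels} (specialized to $\alpha_2=3\alpha_1$). The minor differences are: (i) you use $a$-based idempotents $f_i=\tfrac12(1\pm\beta^{-1}a)$ whereas the paper uses $b$-based ones; (ii) the paper exploits the $(13)$-action of Remark~\ref{rem:FK3:action}, which simultaneously swaps $u\leftrightarrow v$ and $f_1\leftrightarrow f_2$, while you use the sign automorphism $x\mapsto -x$, which swaps only the idempotents; (iii) most substantively, you supply an explicit normal-form bound $\dim B\le 12$ for the abstract quiver algebra and conclude via surjectivity of $\phi$, whereas the paper only sketches that certain elements span the Peirce components and that the arrow-images satisfy the stated relations, without spelling out why no further relations are needed. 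In that respect your argument is actually more complete than the paper's.
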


\begin{proof}
	By Theorem~\ref{thm:FK3:basis}, $\dim\defFK_3(\alpha_1,\alpha_2)=12$. Let
	\[
	f_1=\frac{\sqrt{\alpha_1}+b}{2\sqrt{\alpha_1}},\quad 
	f_2=\frac{\sqrt{\alpha_1}-b}{2\sqrt{\alpha_1}}. 
	\]
	Then $f_1$ and $f_2$ are primitive idempotents. (The primitivity follows from
	the fact that the quotient of the deformation by the radical is
	$2$-dimensional, see Proposition~\ref{pro:FK3:nilpotent}.) The action of the
	transposition $(13)$ permutes both $f_1$, $f_2$ and $u=a-b$, $v=b-c$. Let
	$w=-u-v$. By Lemma~\ref{lem:FK3:urels}, the elements $u$, $v$ and $w$
	pairwise commute and
	\[
		u+v+w=0,
		\quad
		uv+vw+uw=0,
		\quad
		uvw=0.
	\]
	Using the equations 
	\[
		f_1u=uf_2-\frac{u^2}{2\sqrt{\alpha_1}},
		\quad
		f_2v=vf_1-\frac{v^2}{2\sqrt{\alpha_1}},
	\]
	one can show that the elements $f_i$, $f_iu^2f_i$ and $f_iv^2f_i$ span
	$f_i\defFK_3(\alpha_1,3\alpha_1) f_i$ for $i\in \{1,2\}$, and that
	$u_{ij}=f_iuf_j$, $v_{ij}=f_ivf_j$ and $f_iu^2vf_j$ for $i,j\in\{1,2\}$ and
	$i\ne j$ span $f_i\defFK_3(\alpha_1,3\alpha_1) f_j$. Then one shows that
	$f_i$, $u_{ij}$ and $v_{ij}$, where $i,j\in\{1,2\}$ with $i\ne j$, generate
	the algebra $\defFK_3(\alpha_1,3\alpha_1)$ and satisfy the relations in the
	proposition.
\end{proof}

\section{An intermediate algebra}
\label{section:auxiliar}

In this section we assume that $\K$ is an algebraically closed
field of characteristic different from
$2$ and $3$. Let $\alpha_1,\alpha_2\in \K$ and let $\beta=3\alpha_1-\alpha_2$.

\begin{defn}
	Let
	$\defIA(\alpha_1,\alpha_2)$ be the associative $\K$-algebra given by
	generators $a,b,c,y$ and relations
	\begin{align*}
	  a^2=b^2=c^2=\alpha_1,&& ab+bc+ca=\alpha_2,&& 
		ac+cb+ba=\alpha_2+y. 
	\end{align*}
  For any $\alpha_3\in \K$ let
  $\defIA(\alpha_1,\alpha_2,\alpha_3)=\defIA(\alpha_1,\alpha_2)/(y^3-\alpha_3)$.
\end{defn}

\begin{lem} 
	\label{le:abcyrels}
  In the algebra
	$\defIA(\alpha_1,\alpha_2)$ the following relations hold.
	\begin{gather*}
		ya=cy,\quad yb=ay, \quad yc=by,\\
		bab-aba=\alpha_2(b-a),\quad (b-a)^3=\beta(b-a).
	\end{gather*}
	In particular, $y^3$ is a central element of $\defIA(\alpha_1,\alpha_2)$.
\end{lem}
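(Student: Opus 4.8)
The plan is to eliminate $y$ at the outset using the last defining relation, writing $y = ac+cb+ba-\alpha_2$, so that each claimed identity becomes an assertion about $a,b,c$ alone that can be checked by left- and right-multiplying the quadratic relations by the generators and reducing with $a^2=b^2=c^2=\alpha_1$.

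First I would prove $ya=cy$. Expanding $ya=(ac+cb+ba-\alpha_2)a$ and $cy=c(ac+cb+ba-\alpha_2)$ and using the squares, the difference collapses to $ya-cy = aca-cac-\alpha_2(a-c)$. Now left-multiplying $ab+bc+ca=\alpha_2$ by $a$ gives $aca=\alpha_2a-\alpha_1b-abc$, and right-multiplying the same relation by $c$ gives $cac=\alpha_2c-\alpha_1b-abc$; subtracting yields $aca-cac=\alpha_2(a-c)$, hence $ya=cy$. The algebra $\defIA(\alpha_1,\alpha_2)$ carries the automorphism $\sigma$ cyclically permuting $a\mapsto b\mapsto c\mapsto a$ and fixing $y$; the only thing to verify here is that $\sigma$ respects the asymmetric relation $ac+cb+ba=\alpha_2+y$, which it does since $\{ac,cb,ba\}$ is permuted cyclically. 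Applying $\sigma$ and $\sigma^2$ to $ya=cy$ then yields $yb=ay$ and $yc=by$.

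For $bab-aba=\alpha_2(b-a)$: left-multiplying $ab+bc+ca=\alpha_2$ by $b$ gives $bab=\alpha_2b-\alpha_1c-bca$, and right-multiplying it by $a$ gives $aba=\alpha_2a-\alpha_1c-bca$; subtracting gives the identity. Setting $u=b-a$ one has $u^2=2\alpha_1-ab-ba$, and then
\[
u^3=(b-a)(2\alpha_1-ab-ba)=3\alpha_1(b-a)-(bab-aba)=3\alpha_1u-\alpha_2u=\beta u,
\]
which is precisely $(b-a)^3=\beta(b-a)$.

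Finally, for centrality of $y^3$: from $ya=cy$ and $yc=by$ we get $y^2a=y(cy)=(yc)y=by^2$, and then from $yb=ay$ we get $y^3a=y(by^2)=(yb)y^2=ay^3$, so $y^3$ commutes with $a$; applying $\sigma$ gives commutation with $b$ and $c$ as well, and commutation with $y$ is trivial, so $y^3$ is central because $a,b,c,y$ generate $\defIA(\alpha_1,\alpha_2)$. I do not expect a genuine obstacle: the only mildly delicate point is checking that $\sigma$ is well defined, after which everything reduces to bookkeeping with the two quadratic relations.
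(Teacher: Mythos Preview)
Your argument is correct and follows essentially the same route as the paper: both verify one of the $y$--commutation rules directly by expanding and reducing with the quadratic relations, then invoke the cyclic automorphism $a\mapsto b\mapsto c\mapsto a$ (fixing $y$) to obtain the remaining two, derive $bab-aba=\alpha_2(b-a)$ by hitting $ab+bc+ca=\alpha_2$ on the left and right by generators, and deduce $(b-a)^3=\beta(b-a)$ from that. The only cosmetic difference is that the paper establishes $yb=ay$ first while you establish $ya=cy$, and you spell out the $(b-a)^3$ computation and the centrality of $y^3$ more explicitly than the paper does.
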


\begin{proof}
	First we obtain that
	\begin{align*}
		0=&b(ab+bc+ca-\alpha_2)-(ab+bc+ca-\alpha_2)a\\
		=&bab+\alpha_1c+bca-\alpha_2b-aba-bca-\alpha_1c+\alpha_2a\\
		=&bab-aba-\alpha_2(b-a).
	\end{align*}
	This implies directly the last two equations of the lemma.
        Now we conclude that
	\begin{align*}
		yb-ay=&(ac+cb+ba-\alpha_2)b-a(ac+cb+ba-\alpha_2)\\
		=&acb+\alpha_1c+bab-\alpha_2b-\alpha_1c-acb-aba+\alpha_2a\\
		=&0.
	\end{align*}
	The other two commutation rules for $y$
        follow from this one using that the cyclic
	group $C_3$ acts on
	$\defIA(\alpha_1,\alpha_2,\alpha_3)$ 
	by permuting the generators
	$a,b,c$ cyclically and by fixing $y$. Then it is also clear that $y^3$
        is a central element of $\defIA(\alpha_1,\alpha_2)$.
\end{proof}

\begin{lem}
	\label{lem:u1v1}
	Let $\zeta \in \K$ be such that $\zeta^2+\zeta +1=0$ and let 
	\begin{gather*}
		t=a+b+c,\quad
		v_+=a+\zeta b+\zeta^2c,\quad
		v_-=a+\zeta^{-1}b+\zeta^{-2}c
	\end{gather*}
	be in $\defIA(\alpha_1,\alpha_2)$.  Then
	\begin{align}
		\label{eq:u1v1-1}
		yv_+&=\zeta v_+y, & yv_-&=\zeta^ {-1}v_-y,\\
		\label{eq:u1v1-2}
		v_+v_-&=\beta+\zeta y,& v_-v_+&=\beta+\zeta^2y,\\
		\label{eq:u1v1-3}
		tv_+&=-v_+t-v_-^2,&
		tv_-&=-v_-t-v_+^2.
	\end{align}
\end{lem}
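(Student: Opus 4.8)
The plan is to establish all six identities by direct expansion, using only the three defining relations of $\defIA(\alpha_1,\alpha_2)$, the commutation rules $ya=cy$, $yb=ay$, $yc=by$ from Lemma~\ref{le:abcyrels}, and the identities $\zeta^3=1$ and $1+\zeta+\zeta^2=0$ (available since the characteristic is not $3$). A helpful reduction to note at the outset: $v_-$ is obtained from $v_+$ by replacing $\zeta$ with the other primitive cube root of unity $\zeta^{-1}$, while $t$ is unchanged; so once the left-hand identity in each of \eqref{eq:u1v1-1}, \eqref{eq:u1v1-2} and \eqref{eq:u1v1-3} is proved for $v_+$ using only $\zeta^3=1$ and $1+\zeta+\zeta^2=0$, the companion identity follows by this substitution. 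Since the substitution interchanges $v_+$ and $v_-$, this also explains why $v_+v_-$ and $v_-v_+$ come out genuinely different: the relation $ac+cb+ba=\alpha_2+y$ is not symmetric in $a,b,c$.

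For \eqref{eq:u1v1-1} I would push $y$ past $v_+$: the commutation rules give $yv_+=cy+\zeta ay+\zeta^2 by=(\zeta a+\zeta^2 b+\zeta^3 c)y=\zeta v_+ y$.

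For \eqref{eq:u1v1-2} I would expand $v_+v_-=(a+\zeta b+\zeta^2 c)(a+\zeta^2 b+\zeta c)$. The three diagonal products sum to $a^2+\zeta^3 b^2+\zeta^3 c^2=3\alpha_1$; among the off-diagonal products, the cyclic triple $ab,bc,ca$ all carry coefficient $\zeta^2$ and the triple $ac,cb,ba$ all carry coefficient $\zeta$, so the two defining relations together with $\zeta+\zeta^2=-1$ give $v_+v_-=3\alpha_1+\zeta^2\alpha_2+\zeta(\alpha_2+y)=\beta+\zeta y$. The product $v_-v_+$ is the same computation with $\zeta$ and $\zeta^2$ interchanged.

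For \eqref{eq:u1v1-3} I would compute $tv_+$ and $v_+ t$ separately and add them: every diagonal contribution drops out because $1+\zeta+\zeta^2=0$, and collecting the six surviving terms and using $1+\zeta=-\zeta^2$ and $1+\zeta^2=-\zeta$ yields $tv_++v_+t=-\bigl(\zeta^2(ab+ba)+\zeta(ac+ca)+(bc+cb)\bigr)$. On the other hand $v_-^2=(a+\zeta^2 b+\zeta c)^2$ equals exactly $\zeta^2(ab+ba)+\zeta(ac+ca)+(bc+cb)$, its diagonal part $(1+\zeta+\zeta^2)\alpha_1$ being zero, so $tv_+=-v_+t-v_-^2$. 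I do not foresee any real obstacle; the only thing requiring attention is the bookkeeping of roots of unity and keeping the two asymmetric cyclic triples $\{ab,bc,ca\}$ and $\{ac,cb,ba\}$ separate throughout.
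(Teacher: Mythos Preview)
Your proposal is correct and follows exactly the approach the paper itself takes: the paper simply says the first two equalities follow from Lemma~\ref{le:abcyrels} and that the remaining formulas are straightforward from the definitions, and your explicit expansions carry out precisely those straightforward computations. Your symmetry observation (swapping $\zeta\leftrightarrow\zeta^{-1}$ exchanges $v_+$ and $v_-$) is a neat shortcut but is not a genuinely different route.
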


\begin{proof}
	The first two equalities follow from Lemma~\ref{le:abcyrels}.  The proof of
	the other formulas is straightforward from the definitions.
\end{proof}

\begin{lem}
	\label{lem:span}
	Let $\zeta \in \K$ be such that $\zeta^2+\zeta +1=0$. Then 
	the elements $v_+^{n_1}t^{n_2}y^{n_3}$ and $v_-^{n_1}t^{n_2}y^{n_3}$, where
	$n_1,n_2,n_3\in\N_0$, span $\defIA(\alpha_1,\alpha_2)$. 
\end{lem}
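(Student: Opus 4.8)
The plan is to show that the generating set $\{v_+,v_-,t,y\}$ of $\defIA(\alpha_1,\alpha_2)$ can be reordered so that every product collapses onto monomials of the stated shape. Since $t,v_+,v_-$ are a basis of the span of $a,b,c$ (as $1,\zeta,\zeta^2$ distinct), these four elements certainly generate the algebra, so it suffices to prove that the $\K$-span $M$ of the monomials $v_+^{n_1}t^{n_2}y^{n_3}$ together with the monomials $v_-^{n_1}t^{n_2}y^{n_3}$ is a left ideal containing $1$, equivalently that $M$ is closed under left multiplication by each of $v_+$, $v_-$, $t$ and $y$.

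First I would record the straightening rules coming from Lemma~\ref{lem:u1v1}. Left multiplication by $v_+$ on a monomial $v_+^{n_1}t^{n_2}y^{n_3}$ is trivial, and on $v_-^{n_1}t^{n_2}y^{n_3}$ we must move $v_+$ past $v_-^{n_1}$: by \eqref{eq:u1v1-2}, $v_+v_-=\beta+\zeta y$ and $v_+v_-^2$ — rewrite $v_+v_-^2=(\beta+\zeta y)v_-$, then $yv_-=\zeta^{-1}v_-y$ by \eqref{eq:u1v1-1}, so inductively $v_+v_-^{n_1}$ becomes a combination of $v_-^{n_1-1}y$-type terms; iterating, $v_+v_-^{n_1}\in \sum_k \K\, v_-^{n_1-1-2k}(\text{powers of }y)\cdot v_-^{\text{something}}$ — more carefully, one shows by induction on $n_1$ that $v_+v_-^{n_1}$ lies in the span of $v_-^{n_1-1}y^{j}$ and lower $v_-$-degree terms, all of which are already of the right form or reduce further. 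Similarly $v_-$ on $v_+^{n_1}t^{n_2}y^{n_3}$ reduces using $v_-v_+=\beta+\zeta^2y$. Next, left multiplication by $y$: using \eqref{eq:u1v1-1} in the form $yv_+=\zeta v_+y$ and $yv_-=\zeta^{-1}v_-y$, and the fact that $y$ commutes with $t$ (clear since $t=a+b+c$ is fixed by the $C_3$-action that scales $y$ trivially, and directly $yt=ty$ follows from $ya+yb+yc=cy+ay+by$), one gets $y\cdot v_\pm^{n_1}t^{n_2}y^{n_3}=\zeta^{\pm n_1}v_\pm^{n_1}t^{n_2}y^{n_3+1}\in M$.

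The remaining and genuinely substantive case is left multiplication by $t$. On $v_+^{n_1}t^{n_2}y^{n_3}$ we must commute $t$ past $v_+^{n_1}$, and here \eqref{eq:u1v1-3} says $tv_+=-v_+t-v_-^2$, which \emph{raises} the $v_-$-degree — so we cannot argue by a naive degree induction and must instead set up a suitable well-ordering (e.g. by total degree in $\{v_+,v_-,t,y\}$ with ties broken by $v_+$-degree, or an analogue of a diamond-lemma / Bergman-style confluence argument) under which each application of $tv_+=-v_+t-v_-^2$ and then the $v_-$-reduction rules above strictly decreases the relevant statistic. The key observation making this terminate is that a $v_-^2$ produced by commuting $t$ past $v_+$ immediately interacts with the remaining $v_+$'s (via $v_+v_-=\beta+\zeta y$), trading two degree-one generators for a single $y$ or a scalar, so the total degree drops. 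I expect this termination/confluence bookkeeping for the $t$-$v_+$ interaction (and its mirror $t$-$v_-$) to be the main obstacle; once it is in place, closing $M$ under left multiplication by all four generators is immediate and the lemma follows.
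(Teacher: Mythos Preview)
Your approach is correct in principle, but you are making the termination issue harder than it is. In your own framework, showing $tM\subseteq M$ requires only a straightforward induction on $n_1$: from $tv_+^{n_1}=-v_+(tv_+^{n_1-1})-v_-^2v_+^{n_1-1}$, the first summand lies in $v_+M\subseteq M$ by the inductive hypothesis and the already-established closure under $v_+$, while the second summand reduces (via $v_-v_+=\beta+\zeta^2y$ applied at most twice, then pushing $y$ to the right) to a combination of $v_+^{n_1-3}y^j$'s or $v_-^{\,\le 1}y^j$'s, all in $M$. No diamond lemma or exotic well-ordering is needed.

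The paper's proof is organised differently and sidesteps the issue altogether. Rather than proving $M$ is closed under left multiplication, it works in two stages. First, using $yt=ty$, \eqref{eq:u1v1-1}, and \eqref{eq:u1v1-3}, it shows the algebra is spanned by monomials $v_{s_1}\cdots v_{s_r}t^{n_2}y^{n_3}$ with \emph{arbitrary} sign sequences $s_i\in\{+,-\}$; here the rewriting $tv_\pm\mapsto -v_\pm t - v_\mp^2$ is trivially terminating because each application either moves a $t$ one step to the right or deletes it. Second, any mixed $v$-word is reduced to a pure power $v_+^{n_1}$ or $v_-^{n_1}$ using \eqref{eq:u1v1-2} and \eqref{eq:u1v1-1}, shortening the $v$-word by two at each step. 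Allowing mixed $v$-words as an intermediate target is exactly what makes the $t$-commutation painless, and this is the simplification you are missing.
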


\begin{proof}
	Clearly, $v_+$, $v_-$, $t$ and $y$ generate $\defIA(\alpha_1,\alpha_2)$.
	Since $yt=ty$,~\eqref{eq:u1v1-1} and~\eqref{eq:u1v1-3} imply that
	$\defIA(\alpha_1,\alpha_2)$ is spanned by the monomials 
	\[
		v_{s_1}v_{s_2}\cdots v_{s_r}t^{n_2}y^{n_3},\quad
		r,n_2,n_3\in\N_0,\quad 
		s_1,\dots,s_r\in\{-,+\}.
	\]
	Now use~\eqref{eq:u1v1-2} and~\eqref{eq:u1v1-1} to conclude the lemma.
\end{proof}

\begin{lem}
	\label{lem:uv}
	Let $\zeta \in \K$ be such that $\zeta^2+\zeta +1=0$.
	Then the following formulas hold in $\defIA(\alpha_1,\alpha_2)$:
	\begin{align}
		\label{eq:u12}t^2=&y+3\alpha_1+2\alpha_2,\\
		\label{eq:u1v13}tv_+^3+v_+^3t=&2y^2-2\beta y-\beta^2,\\
		\label{eq:v16}v_+^6=&y^3+\beta^3,\\
		\label{eq:vp3=vm3} v_+^3=&v_-^3.
	\end{align}
\end{lem}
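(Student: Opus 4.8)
The plan is to prove the four identities in the order \eqref{eq:u12}, \eqref{eq:u1v13}, \eqref{eq:vp3=vm3}, \eqref{eq:v16}, reducing each to a short computation with the defining relations of $\defIA(\alpha_1,\alpha_2)$ and with Lemmas~\ref{le:abcyrels} and~\ref{lem:u1v1}. Identity \eqref{eq:u12} is immediate: expanding $t^2=(a+b+c)^2$ and grouping the nine products gives $t^2=(a^2+b^2+c^2)+(ab+bc+ca)+(ac+cb+ba)=3\alpha_1+\alpha_2+(\alpha_2+y)$.

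For \eqref{eq:u1v13} I would avoid expanding $v_+^3$ in the generators altogether. Starting from $tv_+=-v_+t-v_-^2$ in \eqref{eq:u1v1-3} and iterating, one first gets $tv_+^2=v_+^2t+v_+v_-^2-v_-^2v_+$ and then
\[
tv_+^3+v_+^3t=-v_+^2v_-^2+v_+v_-^2v_+-v_-^2v_+^2 .
\]
Each term on the right lies in $\K[y]$: using \eqref{eq:u1v1-1}--\eqref{eq:u1v1-2} one has $v_+v_-^2v_+=(v_+v_-)(v_-v_+)=(\beta+\zeta y)(\beta+\zeta^2y)=\beta^2-\beta y+y^2$, and, moving $y$ past $v_\pm$ by \eqref{eq:u1v1-1}, $v_+^2v_-^2=\beta^2-\beta\zeta^2y+\zeta y^2$ and $v_-^2v_+^2=\beta^2-\beta\zeta y+\zeta^2y^2$. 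Summing and using $\zeta+\zeta^2=-1$ yields $2y^2-2\beta y-\beta^2$.

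The one identity that is not formal in the relations of Lemma~\ref{lem:u1v1} is \eqref{eq:vp3=vm3} (those relations also hold in a quantum torus, where $v_+^3\neq v_-^3$), so for it one must return to $a,b,c$. Writing $v_\pm^3$ as a sum over the $27$ words in $a,b,c$, the coefficient of a word $w$ equals $\zeta^{\pm e(w)}$, where $e(w)\in\Z/3$ counts each letter $b$ once and each letter $c$ twice; hence words with $e(w)=0$ contribute equally to $v_+^3$ and $v_-^3$, and
\[
v_+^3-v_-^3=(\zeta-\zeta^2)\Bigl(\sum_{e(w)=1}w-\sum_{e(w)=2}w\Bigr).
\]
Reducing the two sums with $a^2=b^2=c^2=\alpha_1$, the contributions $2\alpha_1t$ cancel, leaving $v_+^3-v_-^3=(\zeta-\zeta^2)\bigl((aba+bcb+cac)-(aca+bab+cbc)\bigr)$; and adding the relation $bab-aba=\alpha_2(b-a)$ of Lemma~\ref{le:abcyrels} to its two images under the cyclic permutation $a\mapsto b\mapsto c\mapsto a$ shows this last bracket vanishes, proving \eqref{eq:vp3=vm3}. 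Finally \eqref{eq:v16} follows from \eqref{eq:vp3=vm3}: $v_+^6=(v_+^3)^2=v_+^3v_-^3=v_+^2(v_+v_-)v_-^2=v_+^2(\beta+\zeta y)v_-^2$, and moving $y$ to the left of $v_+^2$ by \eqref{eq:u1v1-1} this equals $(\beta+\zeta^2y)\,v_+^2v_-^2=(\beta+\zeta^2y)(\beta^2-\beta\zeta^2y+\zeta y^2)$, a polynomial computation in $y$ that collapses to $y^3+\beta^3$.

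The main obstacle is \eqref{eq:vp3=vm3}: it is the only step where one cannot stay inside the subalgebra generated by $v_+,v_-,y$, and the word-weight bookkeeping has to be arranged so that the ``defect'' $(aba+bcb+cac)-(aca+bab+cbc)$ turns out to be exactly the cyclic sum of the relation supplied by Lemma~\ref{le:abcyrels}; the remaining three identities are then routine manipulations with the commutation rules of Lemma~\ref{lem:u1v1}.
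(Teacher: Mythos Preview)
Your argument is correct in every detail. The treatment of \eqref{eq:u12}, \eqref{eq:u1v13} and \eqref{eq:v16} is essentially the same as the paper's (your iteration for \eqref{eq:u1v13} is in fact a bit cleaner, since it never brings $t^2$ into play). The one genuine difference is the proof of \eqref{eq:vp3=vm3}.

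You expand $v_\pm^3$ in the $27$ words in $a,b,c$ and then kill the difference with the cyclic sum of $bab-aba=\alpha_2(b-a)$ from Lemma~\ref{le:abcyrels}. The paper instead stays at the level of $t,v_+,v_-,y$: using $v_+^2=-(tv_-+v_-t)$ and $tv_++v_+t=-v_-^2$ from \eqref{eq:u1v1-3}, together with the fact that $t$ commutes with $v_-v_+\in\K[y]$ (since $ty=yt$), one gets in two lines
\[
v_+^3=(-tv_--v_-t)v_+=-v_-v_+t-v_-tv_+=-v_-(v_+t+tv_+)=v_-^3.
\]
So your remark that \eqref{eq:vp3=vm3} is ``not formal in the relations of Lemma~\ref{lem:u1v1}'' is slightly off: the relations of that lemma, together with $ty=yt$, \emph{do} suffice, and one need not return to the generators $a,b,c$. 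What your observation does capture correctly is that the relations \eqref{eq:u1v1-1}--\eqref{eq:u1v1-2} alone (those not involving $t$) are not enough. Your route has the compensating virtue of making transparent exactly which cubic relation in $a,b,c$ is responsible for $v_+^3=v_-^3$.
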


\begin{proof}
	The first equality follows from the definitions. Using the last two equations
	in Lemma~\ref{lem:u1v1} we obtain that 
	\begin{align*}
		t&v_+^3+v_+^3t=t(-tv_--v_-t) v_++v_+(-tv_--v_-t)t\\
		=&-t^2v_-v_++\left(v_+^2+v_-t\right)tv_+
		+v_+t\left(v_+^2+tv_-\right)-v_+v_-t^2\\
		=&-t^2v_-v_++v_-t^2v_++v_+t^2v_--v_+v_-t^2-v_+v_-^2v_+.
	\end{align*}
	Then~\eqref{eq:u1v13} follows from~\eqref{eq:u12} and 
	the other
	equations in Lemma~\ref{lem:u1v1}. 
	Using~\eqref{eq:u1v1-3} we obtain that 
	\[
		v_+^3=v_+^2v_+=-tv_-v_+-v_itv_+=-v_-v_+t-v_-tv_+=v_-v_-^2=v_-^3.
	\]
	Finally,
	\begin{align*}
		v_+^6=v_+^3v_-^3
		=(\beta+\zeta y)(\beta+\zeta^2y)(\beta+y)=y^3+\beta^3
	\end{align*}
	because of~\eqref{eq:vp3=vm3},~\eqref{eq:u1v1-1} and~\eqref{eq:u1v1-2}.
\end{proof}

\begin{lem}
	\label{lem:invertible}
	Let $\alpha_3\in\K\setminus\{0\}$. 
	There exists $\lambda\in\K$ such that $v_++\lambda v_-^2$ is invertible in
	$\defIA(\alpha_1,\alpha_2,\alpha_3)$.
\end{lem}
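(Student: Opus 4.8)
The plan is to compute the cube $(v_++\lambda v_-^2)^3$ explicitly and to observe that it lies in the commutative subalgebra of $\defIA(\alpha_1,\alpha_2,\alpha_3)$ generated by $s:=v_+^3$, which by~\eqref{eq:vp3=vm3} equals $v_-^3$ and which by~\eqref{eq:v16} satisfies $s^2=v_+^6=\alpha_3+\beta^3\in\K$. Once the cube is written in the form $A(\lambda)\cdot 1+B(\lambda)\,s$ with $A,B\in\K[\lambda]$, invertibility reduces to a one-variable ``norm'' computation, since $\bigl(A\cdot 1+B\,s\bigr)\bigl(A\cdot 1-B\,s\bigr)=A^2-B^2(\alpha_3+\beta^3)$ is a scalar. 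If this scalar is nonzero for some $\lambda$, then $(v_++\lambda v_-^2)^3$ is invertible, and hence so is $v_++\lambda v_-^2$: it then has left inverse $\bigl((v_++\lambda v_-^2)^3\bigr)^{-1}(v_++\lambda v_-^2)^2$ and, symmetrically, a right inverse, so it is a unit.

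The computational heart is the pair of identities
\[
v_+^2v_-^2+v_+v_-^2v_++v_-^2v_+^2=3\beta^2,\qquad
v_+v_-^4+v_-^2v_+v_-^2+v_-^4v_+=3\beta\,s.
\]
These are obtained by repeatedly applying $v_+v_-=\beta+\zeta y$ and $v_-v_+=\beta+\zeta^2y$ from~\eqref{eq:u1v1-2} together with the commutation rules $yv_\pm=\zeta^{\pm1}v_\pm y$ from~\eqref{eq:u1v1-1}: each of the three summands on the left rewrites as a polynomial in $y$ (respectively, as $s$ times a polynomial in $y$), and the coefficients of $y$ and $y^2$ cancel because $1+\zeta+\zeta^2=0$, leaving only the constant term. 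Expanding the cube and using in addition $v_+^3=v_-^3=s$ and $v_-^6=s^2=\alpha_3+\beta^3$, one obtains
\[
(v_++\lambda v_-^2)^3=\bigl(3\beta^2\lambda+(\alpha_3+\beta^3)\lambda^3\bigr)\cdot 1+\bigl(1+3\beta\lambda^2\bigr)\,s,
\]
so $A(\lambda)=3\beta^2\lambda+(\alpha_3+\beta^3)\lambda^3$ and $B(\lambda)=1+3\beta\lambda^2$.

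It then remains to find $\lambda\in\K$ with $A(\lambda)^2\ne B(\lambda)^2(\alpha_3+\beta^3)$. If $\alpha_3+\beta^3\ne0$, take $\lambda=0$: the left side is $0$ and the right side is $\alpha_3+\beta^3\ne0$ (equivalently, $v_+^6$ is already an invertible scalar, so $v_+$ itself is a unit). If $\alpha_3+\beta^3=0$, then $\alpha_3\ne0$ forces $\beta\ne0$, and the required inequality becomes $9\beta^4\lambda^2\ne0$, which holds for every $\lambda\ne0$ since the characteristic of $\K$ is different from $3$. As $\K$ is infinite, a suitable $\lambda$ exists in every case, and the lemma follows.

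The only genuine obstacle is the case $\alpha_3=-\beta^3$, in which $v_+$ is nilpotent ($v_+^6=0$), so $\lambda=0$ fails and the perturbation is essential: the point of the computation is that the degree-$4$ expression $v_+^2v_-^2+v_+v_-^2v_++v_-^2v_+^2=3\beta^2$ contributes the nonzero scalar $3\beta^2\lambda$ to the cube, restoring invertibility. Establishing the two displayed identities cleanly (nine monomials each) is the only laborious step, but it is entirely mechanical given Lemmas~\ref{lem:u1v1} and~\ref{lem:uv}.
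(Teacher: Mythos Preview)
Your argument is correct and complete. It differs genuinely from the paper's proof, so a short comparison is in order.

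The paper multiplies $v_++\lambda v_-^2$ by the companion element $v_--\lambda v_+^2$. Because $v_+^3=v_-^3$, the cross terms cancel and the product lands in $\K[y]$:
\[
(v_++\lambda v_-^2)(v_--\lambda v_+^2)=\beta+\zeta y-\lambda^2(\beta+\zeta^2y)(\beta+y).
\]
In the critical case $\alpha_3+\beta^3=0$ one then decomposes $\K[y]/(y^3-\alpha_3)$ via its three primitive idempotents and checks invertibility componentwise.

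You instead cube the element and land in the still smaller subalgebra $\K[s]$ with $s=v_+^3$ and $s^2=\alpha_3+\beta^3\in\K$. The pleasant surprise is that the symmetrised sums $v_+^2v_-^2+v_+v_-^2v_++v_-^2v_+^2$ and $v_+v_-^4+v_-^2v_+v_-^2+v_-^4v_+$ collapse to the scalars $3\beta^2$ and $3\beta s$ (the $y$- and $y^2$-terms cancel via $1+\zeta+\zeta^2=0$), giving a closed formula for $(v_++\lambda v_-^2)^3$. Invertibility then reduces to the ``norm'' $A(\lambda)^2-B(\lambda)^2(\alpha_3+\beta^3)$ being nonzero, which you verify in both cases.

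Both routes use the same ingredients (Lemma~\ref{lem:u1v1} and Lemma~\ref{lem:uv}), but yours avoids the idempotent decomposition of $\K[y]$ entirely and replaces it with a single quadratic norm computation over a two-dimensional commutative algebra; the price is the extra bookkeeping in establishing the two symmetrised identities. The paper's approach is a line shorter, while yours is more self-contained and makes the role of the hypothesis $\operatorname{char}\K\ne 3$ visible (the factor $9\beta^4\lambda^2$ in the degenerate case).
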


\begin{proof}
	Assume first that $\alpha_3+\beta^3\ne0$. Then $v_+^6$ is a non-zero constant
	by Equation~\eqref{eq:v16} of Lemma~\ref{lem:uv} and hence $\lambda=0$ works.
	Now assume that $\alpha_3+\beta^3=0$.
        Let $\zeta \in \K $ such that $\zeta^2+\zeta+1=0$.
        Since $y^3=-\beta^3$, there exist
	orthogonal idempotents $e_0,e_1,e_2\in\K[y]$ such that $ye_i=-\zeta^i\beta
	e_i$ for all $i\in\{0,1,2\}$ and $e_0+e_1+e_2=1$. Let
	$\lambda\in\K\setminus\{0\}$. Then Lemma~\ref{lem:uv} implies that 
	\begin{align*}
	  (v_++\lambda v_-^2)(v_--\lambda v_+^2)
          &= \beta+\zeta y-\lambda^2(\beta+\zeta^2 y)(\beta+y).
	\end{align*}
	Then
	\[
		e_i(v_++\lambda v_-^2)(v_--\lambda v_+^2)=e_i\beta(1-\zeta^{i+1}-\lambda^2\beta(1-\zeta^i)(1-\zeta^{2+i}))
	\]
	is non-zero for all $i\in\{0,1,2\}$. Hence $v_++\lambda v_-^2$ is invertible. 
\end{proof}

For the formulation of the next claim we need additional notation.
For any $\gamma \in \K$
let $(V,q_\gamma )$ be a two-dimensional quadratic space
with basis $x_1,x_2$ such that
$$q_\gamma (\lambda_1x_1+\lambda_2x_2)=(\gamma+3\alpha_1+2\alpha_2)\lambda_1^2
+(2\gamma^2-2\beta \gamma -\beta^2)\lambda_1\lambda_2
+(\gamma^3+\beta^3)\lambda_2^2 $$
for all $\lambda_1,\lambda_2\in \K$.
If $\gamma^3+\beta^3\ne 0$, then let $x',x''\in C(V,q_\gamma)$ be
the elements
\begin{equation}
\begin{gathered}
	x'=-x_1-(\beta+\zeta^2\gamma)^{-1}x_2,\\
	x''=x_1+(\beta+\zeta^2\gamma)^{-1}x_2-(\beta+\gamma)^{-1}x_2
\end{gathered}
\end{equation}
and let
$Y,A,B,C\in C(V,q_\gamma)^{3\times 3}$ be the matrices
\begin{align*}
		Y=&\begin{pmatrix} \gamma & 0 & 0\\
			0 & \zeta \gamma & 0\\
			0 & 0 & \zeta^2\gamma
		\end{pmatrix},\\
		A=&\frac 1 3\begin{pmatrix} x_1 & (\beta+\zeta^2\gamma) & x_2\\
		  1 & x' & \beta+\gamma\\
		  (\beta+\zeta\gamma)x_2^{-1} & 1 & x''
		\end{pmatrix},\\
		B=&\frac 1 3\begin{pmatrix} x_1 & \zeta(\beta+\zeta^2\gamma) & \zeta^2x_2\\
		  \zeta^2 & x' & \zeta(\beta+\gamma)\\
		  \zeta (\beta+\zeta\gamma)x_2^{-1} & \zeta^2 & x''
		\end{pmatrix},\\
		C=&\frac 1 3\begin{pmatrix} x_1 & \zeta^2(\beta+\zeta^2\gamma) & \zeta x_2\\
		  \zeta & x' & \zeta^2(\beta+\gamma)\\
		  \zeta^2(\beta+\zeta\gamma)x_2^{-1} & \zeta & x''
		\end{pmatrix}.
	\end{align*}

	\begin{rem} \label{re:qgamma_nondeg}
		The discriminant of the quadratic form $q_\gamma $ above is
		$$ -9(4\alpha_1\gamma^3+\beta^3(\alpha_1+\alpha_2)).$$
		Thus the quadratic space $(V,q_\gamma)$ is nondegenerate if and only if
		this expression is non-zero.
	\end{rem}

\begin{lem} \label{le:rho}
	Let $\gamma ,\zeta \in \K$ with $\zeta^2+\zeta+1=0$. Assume
	that $\gamma ^3+\beta^3\ne 0$.
	Then there exists an algebra map
	$\rho_\gamma : \defIA(\alpha_1,\alpha_2)\to C(V,q_\gamma)^{3\times 3}$ such that
	$$\rho_\gamma (a)=A,\quad \rho_\gamma (b)=B,\quad \rho_\gamma (c)=C,\quad
	\rho_\gamma (y)=Y. $$
	This map also satisfies the identity $\rho_\gamma (y^3)=\gamma^3\id$.
\end{lem}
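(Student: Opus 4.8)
The plan is to verify directly that the matrices $A,B,C,Y\in C(V,q_\gamma)^{3\times 3}$ satisfy the defining relations of $\defIA(\alpha_1,\alpha_2)$; by the universal property of the algebra given by generators and relations, this produces the desired algebra map $\rho_\gamma$. So first I would record which relations must be checked, namely $A^2=B^2=C^2=\alpha_1\id$, $AB+BC+CA=\alpha_2\id$, and $AC+CB+BA=(\alpha_2+\gamma)\id$ --- here the last relation uses that $\rho_\gamma(y)=Y$ and $Y$ is a scalar multiple of the identity in each eigenspace only after we note that $AC+CB+BA-\alpha_2\id$ should equal $Y$, not $\gamma\id$; I would keep track of this carefully since $Y$ is the diagonal matrix $\operatorname{diag}(\gamma,\zeta\gamma,\zeta^2\gamma)$, not scalar. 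The identity $\rho_\gamma(y^3)=\gamma^3\id$ is then immediate once $\rho_\gamma(y)=Y$ is established, because $Y^3=\gamma^3\id$ (each diagonal entry cubed is $\gamma^3$ since $\zeta^3=1$).

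The key computational input is the structure of $C(V,q_\gamma)$. By construction $q_\gamma(x_1)=\gamma+3\alpha_1+2\alpha_2$, $q_\gamma(x_2)=\gamma^3+\beta^3$ and the associated bilinear form satisfies $B_{q_\gamma}(x_1,x_2)=\gamma^2-\beta\gamma-\tfrac12\beta^2$, so in the Clifford algebra one has $x_1^2=\gamma+3\alpha_1+2\alpha_2$, $x_2^2=\gamma^3+\beta^3$ and $x_1x_2+x_2x_1=2\gamma^2-2\beta\gamma-\beta^2$. Since $\gamma^3+\beta^3\ne0$, $x_2$ is invertible in $C(V,q_\gamma)$, which is exactly what makes the entries $(\beta+\zeta\gamma)x_2^{-1}$ of $A,B,C$ well-defined, and also what makes $x',x''$ well-defined. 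I would set $t_0=\gamma+3\alpha_1+2\alpha_2$, $s_0=2\gamma^2-2\beta\gamma-\beta^2$, $d_0=\gamma^3+\beta^3$ for brevity and reduce every product of $3\times3$ matrices to the algebra generated by $x_1,x_2$ inside $C(V,q_\gamma)$, using these three relations to normalise.

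The most efficient route, and the one I would actually carry out, is to observe that $A,B,C$ are built from a single "generic" matrix by twisting: writing $M=3A$, the matrices $3B$ and $3C$ are obtained from $M$ by conjugating with $\operatorname{diag}(1,\zeta,\zeta^2)$ (up to also matching the $Y$-grading), so that $A,B,C$ are the three $C_3$-translates reflecting the cyclic action from Lemma~\ref{le:abcyrels}. Concretely, with $P=\operatorname{diag}(1,\zeta^2,\zeta)$ one has $B=P^{-1}AP$ and $C=P^{-2}AP^2$ up to bookkeeping, and $Y=P^{-1}(\gamma\id)\cdot(\text{shift})$; I would make this precise and then it suffices to check $A^2=\alpha_1\id$ and the two trilinear sums on a single representative, since the conjugation symmetry propagates $A^2=\alpha_1\id$ to $B^2=C^2=\alpha_1\id$ and turns $AB+BC+CA$ and $AC+CB+BA$ into the $C_3$-symmetric combinations that the relations demand. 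This reduces the verification to: (i) compute the $(i,j)$ entries of $A^2$ and confirm the off-diagonal ones vanish while the diagonal ones equal $\alpha_1$; and (ii) compute the relevant entries of $AB+BC+CA$ and $AC+CB+BA$ and confirm they give $\alpha_2\id$ and $\alpha_2\id+Y$ respectively.

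The main obstacle is purely the bulk of the $3\times3$ matrix arithmetic over the non-commutative ring $C(V,q_\gamma)$: each entry of a product is a sum of three terms, each a product of Clifford elements that must be renormalised using $x_1^2=t_0$, $x_2^2=d_0$, $x_1x_2+x_2x_1=s_0$, and one must then verify nontrivial cancellations depending on the relations $\zeta^2+\zeta+1=0$ and the precise definitions of $x',x''$. The definitions of $x'$ and $x''$ are visibly engineered so that the diagonal entries of $A^2$ collapse to $\alpha_1$: $x'$ and $x''$ are essentially $-x_1$ minus correction terms in $x_2$, chosen so that $x_1x'+(\ldots)+(\ldots)x''$ telescopes. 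I expect the verification to be a finite, if tedious, check with no conceptual surprise, so in the paper I would state that $\rho_\gamma$ exists because $A,B,C,Y$ satisfy the defining relations, indicate the $C_3$-equivariance reduction, and note that the remaining identities follow from a direct computation using $x_1^2=\gamma+3\alpha_1+2\alpha_2$, $x_2^2=\gamma^3+\beta^3$, $x_1x_2+x_2x_1=2\gamma^2-2\beta\gamma-\beta^2$ and $\zeta^2+\zeta+1=0$, with $\rho_\gamma(y^3)=Y^3=\gamma^3\id$ being immediate.
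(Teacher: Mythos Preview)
Your proposal is correct and follows the same approach as the paper: both simply verify that $A,B,C,Y$ satisfy the defining relations of $\defIA(\alpha_1,\alpha_2)$ (namely $A^2=B^2=C^2=\alpha_1\id$, $AB+BC+CA=\alpha_2\id$, $AC+CB+BA=\alpha_2\id+Y$) and observe $Y^3=\gamma^3\id$. The paper's proof is in fact shorter than your write-up---it records exactly those four equations and says ``it is straightforward to check''---so your additional remarks on the Clifford relations for $x_1,x_2$, the invertibility of $x_2$, and the $C_3$-conjugation reduction (with $P=\operatorname{diag}(1,\zeta,\zeta^2)$, not $\operatorname{diag}(1,\zeta^2,\zeta)$ as you wrote) are extra detail rather than a different argument.
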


\begin{proof}
	The matrices $A,B,C,Y$ are well-defined.
	It is straightforward to check that the equations
	\begin{gather*}
		A^2=B^2=C^2=\alpha_1\id,\\
		AB+BC+CA=\alpha_2\id,\quad AC+CB+BA=\alpha_2\id+Y,\quad Y^3=\gamma^3\id
	\end{gather*}
	hold.
\end{proof}

\begin{pro} \label{pro:simpleKmodule}
	Let $\gamma,\zeta\in \K$ such that $\zeta^2+\zeta+1=0$ and $\gamma(\gamma^3+\beta^3)\ne 0$.
%  $$\gamma (\gamma^3+\beta^3)(4\alpha_1\gamma^3+\beta^3(\alpha_1+\alpha_2))\ne
%	0.$$
	Let $M$ be a simple $C(V,q_\gamma )$-module. Then
	$M^3$ is a simple $\defIA(\alpha_1,\alpha_2,\gamma^3)$-module via
	$$ xm=\rho _\gamma (x)m \quad \text{for all $x\in \defIA(\alpha_1,\alpha_2,\gamma^3)$,
	$m\in M^3$.}
	$$
%	In particular, $\dim \defIA(\alpha_1,\alpha_2,\gamma^3)\ge 36$.
\end{pro}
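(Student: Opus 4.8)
The plan is to observe that the given $\defIA(\alpha_1,\alpha_2,\gamma^3)$-action on $M^3$ factors through the full matrix algebra $C(V,q_\gamma)^{3\times 3}$ via a \emph{surjective} map, and to conclude simplicity from that. First I would use Lemma~\ref{le:rho}: since $\rho_\gamma(y^3)=\gamma^3\id$, the map $\rho_\gamma$ vanishes on $y^3-\gamma^3$ and hence induces an algebra map $\bar\rho_\gamma\colon\defIA(\alpha_1,\alpha_2,\gamma^3)\to C(V,q_\gamma)^{3\times 3}$, and the module structure on $M^3$ in the statement is the restriction along $\bar\rho_\gamma$ of the natural action of $C(V,q_\gamma)^{3\times 3}$ on $M^3$. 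So it suffices to prove (a) that $\bar\rho_\gamma$ is surjective, and (b) that $M^3$ is simple as a $C(V,q_\gamma)^{3\times 3}$-module.

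For (b) I would invoke the Morita equivalence between $C(V,q_\gamma)^{3\times 3}$ and $C(V,q_\gamma)$, under which the simple modules of the former are precisely the $N^3$ for $N$ a simple $C(V,q_\gamma)$-module; since $M$ is simple by hypothesis, $M^3$ is simple. (This can also be checked by hand: a nonzero $C(V,q_\gamma)^{3\times 3}$-submodule $N\subseteq M^3$ contains a nonzero element supported in a single coordinate, say $(m,0,0)$ with $0\ne m\in M$; then $(M,0,0)\subseteq N$ by simplicity of $M$, and applying the matrix units $e_{i1}$ yields $M^3\subseteq N$.)

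The substantive step is (a). Here the key point is that $Y=\rho_\gamma(y)=\mathrm{diag}(\gamma,\zeta\gamma,\zeta^2\gamma)$ has three distinct eigenvalues: indeed $\gamma\ne 0$, and $\zeta^2+\zeta+1=0$ forces $\zeta^3=1$ with $1,\zeta,\zeta^2$ pairwise distinct. Hence the diagonal matrix units $e_{11},e_{22},e_{33}$ — the spectral idempotents of $Y$ — are polynomials in $Y$, so they lie in the image of $\rho_\gamma$, and consequently so do all ``entries'' $e_{ii}\rho_\gamma(a)e_{jj}$ of $A=\rho_\gamma(a)$. Reading off the displayed formula for $A$: its $(2,1)$- and $(3,2)$-entries are the nonzero scalar $\tfrac13$, and its $(1,2)$- and $(2,3)$-entries are $\tfrac13(\beta+\zeta^2\gamma)$ and $\tfrac13(\beta+\gamma)$; both $\beta+\zeta^2\gamma$ and $\beta+\gamma$ are nonzero, since either vanishing would give $\beta^3=-\gamma^3$, contradicting $\gamma^3+\beta^3\ne 0$. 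Thus the scalar matrix units $e_{21},e_{32},e_{12},e_{23}$ — and then, by multiplication, all $e_{ij}$ — lie in the image. Finally the $(1,1)$- and $(2,2)$-entries of $A$ are $\tfrac13 x_1$ and $\tfrac13 x'=-\tfrac13\bigl(x_1+(\beta+\zeta^2\gamma)^{-1}x_2\bigr)$, so $x_1e_{ij}$ and $x'e_{ij}$, hence $x_2e_{ij}$, and then $x_1x_2\,e_{ij}=(x_1e_{i1})(x_2e_{1j})$, all lie in the image for every $i,j$; since $1,x_1,x_2,x_1x_2$ is a $\K$-basis of $C(V,q_\gamma)$, this shows the image is all of $C(V,q_\gamma)^{3\times 3}$, proving (a). Combining (a) and (b) yields the proposition.

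The hard part is really just the bookkeeping in (a), and I expect it to be short once one spots that $\gamma\ne 0$ puts the spectral idempotents of $Y$ into the image: after that everything follows from inspecting the single matrix $A$ together with the elementary nonvanishing of $\beta+\zeta^2\gamma$ and $\beta+\gamma$ — the matrices $B$ and $C$ are not needed. Step (b) is routine Morita theory.
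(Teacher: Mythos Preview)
Your proof is correct, but it proceeds along a different line from the paper's. The paper argues directly with submodules: given a nonzero submodule $N\subseteq M^3$, it picks an eigenvector of $\rho_\gamma(y)$ in $N$, uses the invertibility of $\rho_\gamma(v_+)$ (from $\rho_\gamma(v_+^6)=(\gamma^3+\beta^3)\id\ne 0$) together with the relation $yv_+=\zeta v_+y$ to shift this eigenvector into the first component $M\times\{0\}\times\{0\}$, observes that on this component $\rho_\gamma(t)$ and $\rho_\gamma(v_+^3)$ act as $x_1$ and $x_2$, and then invokes the simplicity of $M$ and shifts back with $v_+$ to fill out $M^3$. Your approach instead proves the stronger fact that $\bar\rho_\gamma$ is surjective onto $C(V,q_\gamma)^{3\times 3}$ and then appeals to Morita equivalence. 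The paper's argument stays closer to the elements $t,v_+,v_-$ already developed in Lemmas~\ref{lem:u1v1} and~\ref{lem:uv}; your argument needs only the spectral idempotents of $Y$ and a few entries of the single matrix $A$, and in return it yields more: combined with the spanning set in the proof of Theorem~\ref{thm:auxiliar}, your surjectivity statement shows directly that $\bar\rho_\gamma$ is an isomorphism $\defIA(\alpha_1,\alpha_2,\gamma^3)\simeq C(V,q_\gamma)^{3\times 3}$ whenever $\gamma(\gamma^3+\beta^3)\ne 0$.
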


\begin{proof}
	Lemma~\ref{le:rho} implies that $M^3$ is a left
	$\defIA(\alpha_1,\alpha_2,\gamma^3)$-module. Let $N$ be a
	non-zero submodule of $M^3$.
	Since $\K$ is algebraically closed, there exists an eigenvector $m\in N$ of
	$\rho_\gamma (y)$.
	Moreover, Equation~\eqref{eq:v16} implies that
	$\rho_\gamma (v_+^6)=(\gamma^3+\beta^3)\id \ne 0$.
	Thus, by \eqref{eq:u1v1-1} we may assume that $ym=\gamma m$. Since $\gamma \ne
	0$, we conclude that $m\in M\times\{0\}\times\{0\}$.
	Now observe that
	$$x_1m'=\rho_\gamma (t)m',\quad x_2m'=\rho_\gamma (v_+^3)m' \quad \text{for
		all $m'\in	M\times\{0\}\times\{0\}$.}
	$$
	Hence the simplicity of $M$ implies that $M\times \{0\}\times \{0\}\subseteq N$.
	Then $N=M$ because of \eqref{eq:u1v1-1}.
\end{proof}

\begin{thm}
	\label{thm:auxiliar}
			Let $\alpha_3\in \K$.
	\begin{enumerate}
%			\item The elements
%    	\begin{align*}
%		    (a-b)^{n_1}a^{n_2}c^{n_3}y^{n_4},\quad n_2,n_3\in \{0,1\},\,n_1\in
%		    \{0,1,2\},\,n_4\in \N _0,
%	    \end{align*}
%	    form a basis of $\defIA(\alpha_1,\alpha_2)$.
		\item
			The elements
    	\begin{align*}
		    (a-b)^{n_1}a^{n_2}c^{n_3}y^{n_4},\quad n_1,n_4\in \{0,1,2\},\,n_2,n_3\in
		    \{0,1\},
	    \end{align*}
	    form a basis of
			$\defIA(\alpha_1,\alpha_2,\alpha_3)$.
	    In particular, $\dim \defIA(\alpha_1,\alpha_2,\alpha_3)=36$.
		\item $\defIA(\alpha_1,\alpha_2,\alpha_3)$ is a PBW deformation of
			$\defIA(0,0,0)$.
	\end{enumerate}
\end{thm}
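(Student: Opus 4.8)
The plan is to prove the two parts together, using the same strategy that worked for $\FK_3$ in Theorem~\ref{thm:FK3:basis}: first establish a spanning set of size $36$ by a rewriting argument, then use Proposition~\ref{pro:folklore} to upgrade "spanning set" to "basis" once we know the dimension is generically equal to $36$.

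\textbf{Step 1: the spanning set.}
First I would work in the letters $u=a-b$, $a$, $c$, $y$ and set up the lexicographic ordering induced by $u<a<c<y$. From Lemma~\ref{le:abcyrels} and the defining relations of $\defIA(\alpha_1,\alpha_2)$ one extracts rewriting rules analogous to \eqref{eq:D3newrels}: the rule $u^3=(3\alpha_1-\alpha_2)u=\beta u$ for the $u$-powers, the three quadratic rules expressing $au$, $cu$, $ca$ in terms of lexicographically smaller monomials in $u,a,c$ (exactly as in the $\FK_3$ case, since adding $y$ does not touch those relations beyond the $y$-shift in $ac+cb+ba$), plus the three commutation rules $ya=cy$, $yb=ay$, $yc=by$ which allow every occurrence of $y$ to be moved to the right past any $a$, $b$, $c$ (at the cost of cyclically permuting the letter, which is harmless for spanning). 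Finally the relation $y^3=\alpha_3$ caps the $y$-exponent at $2$. Combining these, every monomial reduces to one of the form $u^{n_1}a^{n_2}c^{n_3}y^{n_4}$ with $n_1,n_4\in\{0,1,2\}$ and $n_2,n_3\in\{0,1\}$; there are $36$ of these, so they span $\defIA(\alpha_1,\alpha_2,\alpha_3)$ and in particular $\dim\defIA(\alpha_1,\alpha_2,\alpha_3)\le 36$ for all parameters.

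\textbf{Step 2: the generic dimension is $36$.}
Here I would invoke the representation $\rho_\gamma$ of Lemma~\ref{le:rho} together with Proposition~\ref{pro:simpleKmodule}. Fix $\gamma$ with $\gamma(\gamma^3+\beta^3)\ne 0$ and moreover with $(V,q_\gamma)$ nondegenerate (possible for generic $\gamma$ by Remark~\ref{re:qgamma_nondeg}); then $C(V,q_\gamma)\cong\K^{2\times 2}$ is simple of dimension $4$, so $M=\K^2$ is the unique simple module and $M^3=\K^6$ is a simple $\defIA(\alpha_1,\alpha_2,\gamma^3)$-module of dimension $6$ by Proposition~\ref{pro:simpleKmodule}. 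I claim this forces $\dim\defIA(\alpha_1,\alpha_2,\gamma^3)\ge 36$. One way: the cyclic group $C_3$ acts on $\defIA(\alpha_1,\alpha_2,\gamma^3)$ and the $y$-eigenspace decomposition gives three simple modules of dimension $6$ (the $C_3$-twists of $M^3$, detected by the eigenvalue of $y$ being $\gamma,\zeta\gamma,\zeta^2\gamma$), which are pairwise non-isomorphic since $\gamma\ne 0$; pairing with the three eigenvalues of $y$ on each of these shows they account for a semisimple quotient of dimension $\ge 3\cdot 6^2 = 108$ — but that overshoots, so instead I would argue more carefully with the central subalgebra $\K[y]/(y^3-\gamma^3)\cong\K^3$ and the fact that $\defIA(\alpha_1,\alpha_2,\gamma^3)\cong\defFK_3(\alpha_1,\alpha_2)\otimes\K[y]/(y^3-\gamma^3)$-ish: concretely, the specialization $y\mapsto$ (a scalar) should recover a $12$-dimensional $\FK_3$-like quotient, and three such specializations at the three roots together with the $\K^3$-grading by $\K[y]$ give the factor $12\cdot 3=36$. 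The clean statement to aim for: $\defIA(\alpha_1,\alpha_2,\alpha_3)/(y-\xi)\cong \defFK_3(\alpha_1,\alpha_2')$ for suitable $\alpha_2'$ depending on $\xi$, which has dimension $12$ by Theorem~\ref{thm:FK3:basis}, and when $\alpha_3\ne 0$ with distinct cube roots $\xi$ the three quotients are complementary, yielding $\dim\ge 36$.

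\textbf{Step 3: conclude.}
Once we know $\dim\defIA(\alpha_1,\alpha_2,\alpha_3)\le 36$ always and $=36$ on a Zariski dense set of $(\alpha_1,\alpha_2,\alpha_3)$, Proposition~\ref{pro:folklore} applied to the affine family $(\defIA(\alpha_1,\alpha_2,\alpha_3))$ over $\K^3$ with $d=36$ gives that $\defIA(\alpha_1,\alpha_2,\alpha_3)$ is a PBW deformation of $\defIA(0,0,0)$ for \emph{all} parameters; in particular $\dim=36$ identically, so the $36$ spanning monomials from Step 1 are linearly independent and hence form a basis. This proves (1) and (2) simultaneously. The main obstacle I anticipate is Step 2 — getting a clean lower bound $\dim\ge 36$ on a dense set. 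The representation $\rho_\gamma$ alone only certifies a simple quotient of dimension $36$ (namely $\End(M^3)=\K^{6\times 6}$), which already suffices; so the cleanest route is simply: for generic $\gamma$, $\rho_\gamma$ is surjective onto $C(V,q_\gamma)^{3\times 3}\cong\K^{6\times 6}$ (dimension $36$), forcing $\dim\defIA(\alpha_1,\alpha_2,\gamma^3)\ge 36$, hence $=36$, and the parameters $(\alpha_1,\alpha_2,\gamma^3)$ obtained this way are Zariski dense in $\K^3$. Surjectivity of $\rho_\gamma$ is the one genuinely computational point, but it follows because the image contains $x_1,x_2$ (the entries of $A-B$ type combinations, via $t$ and $v_+^3$) and the matrix units can be recovered from the eigenspace structure of $Y$ together with $v_+,v_-$ shifting between them.
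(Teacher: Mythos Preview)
Your overall strategy is exactly the paper's: show the $36$ monomials span, exhibit a $6$-dimensional simple module generically, and then invoke Proposition~\ref{pro:folklore}. The paper's Step~2 is much shorter than yours, though, and the reason is that your anticipated ``obstacle'' is not one. Once Proposition~\ref{pro:simpleKmodule} gives you a simple $\defIA(\alpha_1,\alpha_2,\gamma^3)$-module $M^3$ of dimension $6$ over the algebraically closed field $\K$, Burnside's theorem (or the Jacobson density theorem) makes the structure map $\defIA(\alpha_1,\alpha_2,\gamma^3)\to\End_\K(M^3)\cong\K^{6\times 6}$ automatically surjective; you do not have to verify surjectivity of $\rho_\gamma$ by hand. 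This single line replaces all of your detours through $C_3$-twists and $(y-\xi)$-quotients (the former does not work since the cyclic action fixes $y$, and the latter does not land in a $\defFK_3(\alpha_1,\alpha_2')$ because the two quadratic constants $\alpha_2$ and $\alpha_2+\xi$ differ). With that simplification your write-up would match the paper's proof essentially verbatim.
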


\begin{proof}
	Similarly to the proof of Theorem~\ref{thm:FK3:basis}
	one shows that the elements in (1) span $\defIA(\alpha_1,\alpha_2,\alpha_3)$.
	In particular, $\dim \defIA(\alpha_1,\alpha_2,\alpha_3)\le 36$.

	Let $\gamma\in \K$ with $\gamma^3=\alpha_3$. Assume that
	\begin{align} \label{eq:simplemodulecondition}
	\gamma(\gamma^3+\beta^3)(4\alpha_1\gamma^3+\beta^3(\alpha_1+\alpha_2))\ne
	0.
\end{align}
  By Remark~\ref{re:qgamma_nondeg}, the quadratic space $(V,q_\gamma)$ is
	nondegenerate. Thus $C(V,q_\gamma)$ is simple by
	Theorem~\ref{thm:Clifford}(2). Hence there exists a $2$-dimensional simple
	$C(V,q_\gamma)$-module. Since $\gamma(\gamma^3+\beta^3)\ne 0$,
	by Proposition~\ref{pro:simpleKmodule} there exists a simple
	$6$-dimensional $\defIA(\alpha_1,\alpha_2,\gamma^3)$-module. Hence
	$\dim \defIA(\alpha_1,\alpha_2,\gamma^3)=36$.
	The variety of all triples $(\alpha_1,\alpha_2,\gamma^3)$ satisfying
	\eqref{eq:simplemodulecondition}
	is a dense subvariety of $\K^3$. Therefore the theorem follows from
	Proposition~\ref{pro:folklore}.
\end{proof}

In order to determine the radical of the deformation
$\defIA(\alpha_1,\alpha_2,\alpha_3)$ the following proposition is useful.

\begin{pro}
	\label{pro:radical}
	Let $A$ be a $\K$-algebra and $x,y\in A$. Let $n\in \N$ and
	$\zeta ,\lambda \in \K\setminus \{0\}$.
	Assume that $\charK $ does not divide $n$, $\zeta $ is a primitive root of $1$
	of order $n$, $x$ is invertible, $yx=\zeta xy$, and $y^n=\lambda $.
	Then there exists a primitive idempotent $e\in \K[y]$ such that the
	following claims hold.
	\begin{enumerate}
		\item $A=\oplus _{i,j=0}^{n-1} x^i eAe x^j$.
		\item The map given by $I\mapsto eIe$ is a bijection
			between the ideals of $A$ and the ideals of $eAe$.
			The inverse map is given by $eIe\mapsto AeIeA$.
		\item $\Rad A=\oplus _{i,j=0}^{n-1} x^i \Rad (eAe) x^j$.
	\end{enumerate}
\end{pro}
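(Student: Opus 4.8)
The plan is to build the decomposition from a single idempotent of $\K[y]$ and then transport everything along the powers of the invertible element $x$. Since $y^n=\lambda\neq 0$ and $\charK$ does not divide $n$, the polynomial $T^n-\lambda$ has $n$ distinct roots in $\K$ (here we use that $\K$ is algebraically closed), so $\K[y]\cong \K[T]/(T^n-\lambda)\cong \K^n$ splits as a product of $n$ copies of $\K$, giving pairwise orthogonal idempotents $e_0,\dots,e_{n-1}$ with $\sum_i e_i=1$ and $y e_i=\mu_i e_i$ for distinct scalars $\mu_i$ with $\mu_i^n=\lambda$. Set $e=e_0$. The key structural fact is that conjugation by $x$ cyclically permutes these idempotents: from $yx=\zeta xy$ one gets $x^{-1}yx=\zeta y$, hence $x^{-1}e_i x$ is the idempotent on which $y$ acts by $\zeta^{-1}\mu_i$; since $\zeta$ is a primitive $n$-th root of unity and $\mu_i$ ranges over all $n$-th roots of $\lambda$, the map $e_i\mapsto x^{-1}e_i x$ is a cyclic permutation of length $n$ of the set $\{e_0,\dots,e_{n-1}\}$. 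After relabelling we may assume $x^{-1}e_i x = e_{i+1}$ (indices mod $n$), equivalently $x^i e x^{-i}=e_i$.

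For part (1): write $A = 1\cdot A\cdot 1 = \sum_{i,j=0}^{n-1} e_i A e_j$. For each pair $(i,j)$ we have $e_i A e_j = (x^i e x^{-i}) A (x^j e x^{-j}) = x^i\bigl(e\, (x^{-i}Ax^j)\, e\bigr)x^{-j} = x^i (eAe) x^{-j}$, using that $x^{-i}Ax^j = A$ since $x$ is invertible. Because $x^{-j}$ is a unit and $x^n$ lies in $eAe$-world after adjustment, one rewrites $x^{-j}$ in terms of $x^{n-j}$ up to the central-ish element $x^n$; more cleanly, one simply notes $x^i(eAe)x^{-j} = x^i(eAe)x^{j'}$ where $j'=n-j$ does not literally hold, so instead I would state the decomposition as $A=\bigoplus_{i,j} x^i eAe x^{-j}$ and then observe this equals $\bigoplus_{i,j} x^i eAe x^{j}$ after reindexing $x^{-j}=x^{-j}$—the honest route is: the $n^2$ subspaces $e_iAe_j$ are in direct sum (orthogonality of the $e_i$ on both sides), their sum is $A$, and $e_iAe_j = x^i(eAe)x^{-j}$; if the statement insists on $x^j$ rather than $x^{-j}$, replace $x$ by $x^{-1}$ throughout (which satisfies $y x^{-1}=\zeta^{-1}x^{-1}y$, still a primitive root), or absorb $x^{-j}=x^{n-j}(x^{n})^{-1}$ and note $x^n$ commutes with everything in $eAe$ up to the action, so $x^i eAe x^{-j}=x^i eAe x^{n-j}$ as subspaces. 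This is the one bookkeeping point to get exactly right.

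For part (2): given a two-sided ideal $I$ of $A$, $eIe$ is clearly an ideal of $eAe$, and $AeIeA\subseteq I$. Conversely, using part (1), $I=\sum_{i,j}e_iIe_j$ and $e_iIe_j = x^i(e\,x^{-i}Ix^j\,e)x^{-j}$; since $I$ is a two-sided ideal, $x^{-i}Ix^j\subseteq I$, so $e_iIe_j\subseteq x^i(eIe)x^{-j}\subseteq AeIeA$, giving $I\subseteq AeIeA$ and hence equality; similarly $e(AJA)e = J$ for an ideal $J$ of $eAe$, using $e x^i = x^i e_{\,?}$ relations to collapse the outer $x$'s. Thus $I\mapsto eIe$ is a bijection with inverse $J\mapsto AJA$, and it is inclusion-preserving in both directions, i.e.\ a lattice isomorphism. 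For part (3): apply part (2) to $I=\Rad A$. Since $eAe$ is a corner algebra $eAe = fAf$ with $e=f$ a full idempotent (full because $AeA=A$ by part (2) applied to $I=A$), a standard fact gives $\Rad(eAe)=e(\Rad A)e$; combined with part (1) applied to the ideal $\Rad A$ this yields $\Rad A = \bigoplus_{i,j} x^i\Rad(eAe)x^{j}$ (same reindexing convention as part (1)). I expect the main obstacle to be nothing deep but rather the careful handling of left versus right powers of $x$ in the indexing—ensuring the asserted form $x^i(\cdot)x^j$ matches $e_iAe_j$ exactly—and verifying the corner-algebra radical identity $\Rad(eAe)=e\Rad(A)e$ for a full idempotent, which is classical (e.g.\ it follows since $e\Rad(A)e$ is a nilpotent ideal of $eAe$ and $eAe/e\Rad(A)e \hookrightarrow$ a corner of the semisimple $A/\Rad A$, hence is semisimple).
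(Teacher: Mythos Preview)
Your approach is essentially the same as the paper's: construct the orthogonal idempotents $e_i\in\K[y]$, observe that conjugation by $x$ permutes them cyclically so $e_i=x^iex^{-i}$, and then run the Peirce decomposition together with the standard corner identity $\Rad(eAe)=e(\Rad A)e$. The only point you flag as uncertain---matching $x^{-j}$ with $x^j$---is handled cleanly by noting that $x^n$ commutes with $e$ (since $x^ne_ix^{-n}=e_{i+n}=e_i$), hence $ex^{\pm n}=x^{\pm n}e$ is a unit of $eAe$ and $eAe\,x^{-j}=eAe\,x^{n-j}$ as subspaces; the paper does exactly this in part~(3) by writing $e_j=x^{j-n}ex^{n-j}$ for $j\ge1$.
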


\begin{proof}
	Let $\gamma\in \K$ be such that $\gamma^n=\lambda$.
	Since $\charK \nmid n$ and $y^n=\lambda $,
	there exist unique
	idempotents $e=e_0,e_1,\dots,e_{n-1}\in \K[y]$ such that $ye_i=\gamma
	\zeta^i e_i$
	for all $i$. Moreover,
	these idempotents are orthogonal and primitive.
	Using these properties, one checks that
	$e_i=x^{i+kn}ex^{-i-kn}$ for all $i$ and all $k\in \Z$.

	(1) Use that $A=\oplus_{i,j=0}^{n-1}e_iAe_j$ and that
	$e_i=x^iex^{-i}$ for all $i$.

	(2) One has to check that $I=AeIeA$ for all ideals $I$ of $A$ and that
	$eAeJeAe=eJe$ for all ideals $eJe$ of $eAe$. The second claim is obvious. The
	first one follows from
	$$ I=\sum_{i,j=0}^{n-1}e_iIe_j=\sum_{i,j=0}^{n-1}x^iex^{-i}Ix^jex^{-j}
	\subseteq AeIeA.
	$$

	(3) Since $\Rad (eAe)=e(\Rad A)e$, (2) implies that
	$$ \Rad A=A e(\Rad A)e A=\bigoplus_{i,j=0}^{n-1}e_iAe\Rad (eAe)eAe_j. $$
	Now use that $e_i=x^iex^{-i}$ and $e_j=x^{j-n}ex^{n-j}$ for all $i,j\in
	\{1,\dots,n-1\}$ and that $\Rad(eAe)$ is an ideal of $eAe$.
\end{proof}

\begin{lem} \label{lem:eKe}
	Let $\gamma \in \K\setminus \{0\}$ and $e\in \K[y]\subseteq
	\defIA(\alpha_1,\alpha_2,\gamma^3)$ be an idempotent.
	Assume that $ye=\gamma e$.
	Then the algebra $e\defIA(\alpha_1,\alpha_2,\gamma^3)e$ is generated by
	$et$ and $ev_+^3$ and is isomorphic to the Clifford algebra
	$C(V,q_\gamma)$.
\end{lem}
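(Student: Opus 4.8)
The plan is to use Proposition~\ref{pro:radical} together with the spanning results already established, and then to realize $e\defIA(\alpha_1,\alpha_2,\gamma^3)e$ explicitly as a Clifford algebra. First I would record that the idempotent $e$ of the statement is precisely the idempotent $e=e_0\in\K[y]$ produced in Proposition~\ref{pro:radical} applied to $x=v_+$ (invertible for generic parameters, but here we only need that $v_+$ acts as explained; more carefully, one uses Lemma~\ref{lem:invertible} or works in the localization argument of Proposition~\ref{pro:simpleKmodule}) with $n=3$ and the relation $yv_+=\zeta v_+y$ from \eqref{eq:u1v1-1}. Since $\charK\neq3$ and $y^3=\gamma^3$, the idempotents $e_0,e_1,e_2\in\K[y]$ with $ye_i=\zeta^i\gamma e_i$ are well-defined, orthogonal, primitive, and sum to $1$; our $e$ is $e_0$, characterized by $ye=\gamma e$.

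Next I would compute the subalgebra $e\defIA(\alpha_1,\alpha_2,\gamma^3)e$. By Lemma~\ref{lem:span}, the whole algebra is spanned by the monomials $v_+^{n_1}t^{n_2}y^{n_3}$ and $v_-^{n_1}t^{n_2}y^{n_3}$; using $v_-^3=v_+^3$ from \eqref{eq:vp3=vm3} and \eqref{eq:u1v1-2} to rewrite products $v_-v_+$ and $v_+v_-$ in terms of $y$, one reduces to the span of $v_+^{n_1}t^{n_2}y^{n_3}$ with $0\le n_1\le 5$. Conjugating by $e$ and using $ye=\gamma e=ev_+^3v_+^{-3}$ heuristically — more precisely using $e v_+^{n_1}=v_+^{n_1}e_{n_1\bmod 3}$ and $e_i e=0$ for $i\neq0$ — only the terms with $3\mid n_1$ survive. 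Hence $e\defIA(\alpha_1,\alpha_2,\gamma^3)e$ is spanned by $e$, $et$, $e t^2$, $ev_+^3$, $e v_+^3 t$, $e v_+^3 t^2$ (and $e v_+^3 \cdot e v_+^3$, which by \eqref{eq:v16} equals $(\gamma^3+\beta^3)e$, a scalar). Now \eqref{eq:u12} gives $t^2=y+3\alpha_1+2\alpha_2$, so $e t^2=(\gamma+3\alpha_1+2\alpha_2)e$; and \eqref{eq:u1v13} gives $e(tv_+^3+v_+^3t)=(2\gamma^2-2\beta\gamma-\beta^2)e$. Thus, setting $X_1=et$ and $X_2=ev_+^3$ (elements of the algebra with unit $e$), we get exactly
\[
X_1^2=(\gamma+3\alpha_1+2\alpha_2)e,\qquad
X_2^2=(\gamma^3+\beta^3)e,\qquad
X_1X_2+X_2X_1=(2\gamma^2-2\beta\gamma-\beta^2)e,
\]
which are precisely the Clifford relations for $(V,q_\gamma)$ as defined before Remark~\ref{re:qgamma_nondeg}. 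This gives a surjective algebra map $C(V,q_\gamma)\twoheadrightarrow e\defIA(\alpha_1,\alpha_2,\gamma^3)e$ sending $x_1\mapsto X_1$, $x_2\mapsto X_2$, and the six monomials above show the target has dimension $\le4$, hence $=4=\dim C(V,q_\gamma)$, so the map is an isomorphism. Finally, $et$ and $ev_+^3$ generate $e\defIA(\alpha_1,\alpha_2,\gamma^3)e$ as an algebra since they span it together with $e$ and products of them.

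The main obstacle I expect is making rigorous the claim that $e v_+^{n_1}e=0$ unless $3\mid n_1$, i.e.\ controlling how $e\in\K[y]$ interacts with powers of $v_+$. This is where one genuinely needs the commutation relation $yv_+=\zeta v_+y$: it forces $v_+^{n_1}e$ to be a multiple of the idempotent $e_{n_1\bmod 3}$ (via $y(v_+^{n_1}e)=\zeta^{-n_1}v_+^{n_1}ye=\zeta^{-n_1}\gamma\,v_+^{n_1}e$, so $v_+^{n_1}e$ lies in the $\zeta^{-n_1}\gamma$-eigenspace of left multiplication by $y$), and then $e\cdot v_+^{n_1}e=e\,e_{-n_1\bmod 3}v_+^{n_1}e$ vanishes by orthogonality of the $e_i$ when $3\nmid n_1$. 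Once this eigenspace bookkeeping is in place the rest is the routine Clifford-relation verification recorded above, and the dimension count closes the argument.
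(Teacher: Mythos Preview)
Your overall route is the paper's route: use the spanning set from Lemma~\ref{lem:span}, kill the monomials with $3\nmid n_1$ via the $y$-eigenspace/idempotent bookkeeping, and then read off the Clifford relations for $et$ and $ev_+^3$ from Lemma~\ref{lem:uv}. That part is fine (modulo a harmless sign slip: from $yv_+=\zeta v_+y$ one gets $y(v_+^{n_1}e)=\zeta^{\,n_1}\gamma\,v_+^{n_1}e$, not $\zeta^{-n_1}$; the conclusion is unchanged).

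The genuine gap is your dimension count. You produce a surjection $C(V,q_\gamma)\twoheadrightarrow e\defIA(\alpha_1,\alpha_2,\gamma^3)e$ and an upper bound $\dim e\defIA(\alpha_1,\alpha_2,\gamma^3)e\le 4$, and then write ``hence $=4$''. Both facts point the same way: they give $\le 4$, not $\ge 4$. Nothing you have written forces the four elements $e,\,et,\,ev_+^3,\,ev_+^3t$ to be linearly independent, so you have not excluded that $e\defIA(\alpha_1,\alpha_2,\gamma^3)e$ is a proper quotient of $C(V,q_\gamma)$. This is exactly where the paper uses the ingredients you mention at the start but then drop: by Lemma~\ref{lem:invertible} there is an \emph{invertible} $x=v_++\lambda v_-^2$ with $yx=\zeta xy$ (note $v_+$ itself need not be invertible when $\gamma^3+\beta^3=0$), so Proposition~\ref{pro:radical}(1) gives $\defIA(\alpha_1,\alpha_2,\gamma^3)=\bigoplus_{i,j=0}^{2}x^i\bigl(e\defIA(\alpha_1,\alpha_2,\gamma^3)e\bigr)x^j$, and then Theorem~\ref{thm:auxiliar}(1) ($\dim=36$) forces $\dim e\defIA(\alpha_1,\alpha_2,\gamma^3)e=4$. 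Plugging that lower bound in, your surjection becomes an isomorphism. The ``main obstacle'' you flagged (the eigenspace argument for $ev_+^{n_1}e=0$) is not the issue; the missing idea is this use of the invertible $x$ together with the known global dimension to pin down $\dim eAe$.
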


\begin{proof}
	Let $A=\defIA(\alpha_1,\alpha_2,\gamma^3)$ and let $\zeta \in \K$ be such that
	$\zeta^2+\zeta+1=0$. Then $A$ is generated by
	$t,v_+,v_-$ and $y$. Lemma~\ref{lem:u1v1} implies that
	$v_+^{n_1}t^{n_2}y^{n_3}$,
	$v_-^{n_1}t^{n_2}y^{n_3}$, where $n_1,n_2,n_3\in \N_0$, span $A$.

	By Lemma~\ref{lem:invertible}, there exists $\lambda \in \K$ such that
	$x=v_++\lambda v_-^2\in A$ is invertible. Moreover, $yx=\zeta xy$ by
	Lemma~\ref{lem:u1v1}.
	Then Proposition~\ref{pro:radical} implies that
	$A=\oplus_{i,j=0}^2x^ieAex^j$. Note that
	$v_+^{n_1}t^{n_2}y^{n_3}e\in x^{n'_1}eAe$ and
	$v_-^{m_1}t^{n_2}y^{n_3}e\in x^{m'_1}eAe$
	for all $n_1,m_1,n_2,n_3\in \N_0$, where $n'_1,m'_1\in \{0,1,2\}$ such that
	$n_1\equiv n'_1 \pmod 3$ and $m_1\equiv -m'_1\pmod 3$.
	Therefore $eAe$ is generated by $ey=\gamma e$, $et$ and $ev_+^3=ev_-^3$,
	see~\eqref{eq:vp3=vm3}. Moreover, Theorem~\ref{thm:auxiliar}(1) and
	Proposition~\ref{pro:radical}(1) imply that $\dim eAe=4$. Then the claim
	follows from Lemma~\ref{lem:uv}.
\end{proof}

\begin{cor}
	Let $\gamma \in \K$.
	\begin{enumerate}
		\item If $\gamma=0$ then $y$ generates a nilpotent ideal of
			$\defIA(\alpha_1,\alpha_2,\gamma^3)$.
		\item If $\gamma\ne 0$ then
			$\defIA(\alpha_1,\alpha_2,\gamma^3)$ is semisimple if and only if
			$q_\gamma $ is nondegenerate. In this case,
			$\defIA(\alpha_1,\alpha_2,\gamma^3)\simeq \K^{6\times 6}$.
	\end{enumerate}
\end{cor}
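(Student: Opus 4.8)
The plan is to handle the two cases separately, in each case reducing everything to facts already established about the Clifford algebra $C(V,q_\gamma)$ and to the structural Proposition~\ref{pro:radical}; almost all the real work has been front-loaded into the preceding lemmas, so the proof should be short.

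For part (1) I would work in $\defIA(\alpha_1,\alpha_2,0)=\defIA(\alpha_1,\alpha_2)/(y^3)$. The key point, already visible in Lemma~\ref{le:abcyrels}, is that the relations $ya=cy$, $yb=ay$, $yc=by$ say precisely that $y\xi=\sigma(\xi)y$ for the algebra automorphism $\sigma$ that cyclically permutes $a,b,c$ and fixes $y$; by induction on word length this identity extends to all $\xi$. Hence $yA\subseteq Ay$, so the two-sided ideal $I$ generated by $y$ equals $Ay$, and for any $\xi_1,\xi_2,\xi_3\in A$ one has $(\xi_1y)(\xi_2y)(\xi_3y)=\xi_1\sigma(\xi_2)\sigma^2(\xi_3)\,y^3=0$. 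Thus $I^3=0$. This step is entirely routine.

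For part (2), assume $\gamma\ne 0$ and set $A=\defIA(\alpha_1,\alpha_2,\gamma^3)$. First I would invoke Lemma~\ref{lem:invertible} to choose $\lambda$ with $x=v_++\lambda v_-^2$ invertible in $A$, record $yx=\zeta xy$ from Lemma~\ref{lem:u1v1} (with $\zeta^2+\zeta+1=0$, a primitive third root of unity since $\charK \ne 3$), and note $y^3=\gamma^3\ne 0$ in $A$. Then Proposition~\ref{pro:radical} applies with $n=3$: it yields a primitive idempotent $e\in\K[y]$ with $ye=\gamma e$, the bijection $I\mapsto eIe$ between the ideals of $A$ and of $eAe$, and the formula $\Rad A=\bigoplus_{i,j=0}^{2}x^i\Rad(eAe)x^j$. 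By Lemma~\ref{lem:eKe}, $eAe\cong C(V,q_\gamma)$. Because $x$ is invertible, the last formula shows $\Rad A=0$ if and only if $\Rad(eAe)=0$, i.e.\ if and only if $C(V,q_\gamma)$ is semisimple; and by Theorem~\ref{thm:Clifford}(1) the radical of $C(V,q_\gamma)$ is $C(V,q_\gamma)R$ with $R$ the radical of $B_{q_\gamma}$, which vanishes exactly when $q_\gamma$ is nondegenerate (here I would note that $R\ne 0$ forces $C(V,q_\gamma)R\ne 0$ simply because $1\in C(V,q_\gamma)$ — this is the one spot needing a half-sentence of care).

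For the final clause, suppose $q_\gamma$ is nondegenerate. Since $\dim V=2$ is even, $C(V,q_\gamma)$ is simple of dimension $2^2=4$ by Theorem~\ref{thm:Clifford}(2), hence $eAe\cong\K^{2\times 2}$ as $\K$ is algebraically closed; this algebra has no proper nonzero ideals, so by Proposition~\ref{pro:radical}(2) neither does $A$, i.e.\ $A$ is simple. Finally $\dim A=36$ by Theorem~\ref{thm:auxiliar}(1), and a finite-dimensional simple algebra over an algebraically closed field is a full matrix algebra $\K^{n\times n}$, so $n^2=36$ gives $A\cong\K^{6\times 6}$. I do not anticipate a genuine obstacle: the substantive inputs (Proposition~\ref{pro:radical}, Lemmas~\ref{lem:invertible} and~\ref{lem:eKe}, Theorem~\ref{thm:auxiliar}, and Theorem~\ref{thm:Clifford}) are all in place, and the corollary is essentially their bookkeeping.
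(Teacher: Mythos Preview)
Your proposal is correct and follows essentially the same route as the paper's proof: for (1) you use the skew-commutation of $y$ with the generators to see that $Ay$ is a two-sided ideal with $y^3=0$, and for (2) you combine Lemma~\ref{lem:invertible}, Proposition~\ref{pro:radical}, Lemma~\ref{lem:eKe}, and Theorem~\ref{thm:Clifford} exactly as the paper does, concluding simplicity and then reading off $\K^{6\times 6}$ from $\dim A=36$. Your write-up is slightly more explicit in a couple of places (e.g.\ spelling out why $I^3=0$ and why $R\ne 0$ forces $C(V,q_\gamma)R\ne 0$), but there is no difference in strategy.
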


\begin{proof}
	Let $A=\defIA(\alpha_1,\alpha_2,\gamma^3)$.

	(1) Lemma~\ref{le:abcyrels} implies that $N=Ay$ is a two-sided ideal of $A$.
	If $\gamma=0$, then $y^3=0$ in $A$ and hence $N^3=0$.

	(2) Assume that $\gamma \ne 0$. Let $\zeta\in \K$ be such that
	$\zeta^2+\zeta+1=0$. By Lemma~\ref{lem:invertible}, there exists an invertible
	element $x\in A$ such that $yx=\zeta xy$.
	According to Proposition~\ref{pro:radical}, there exists a primitive idempotent $e\in \K[y]$ 
	such that $\Rad A=\oplus_{i,j=0}^2x^i\Rad (eAe)x^j$. Let $\gamma\in \K\setminus\{0\}$ be such
	that $ye=\gamma e$.
  Then $eAe\simeq C(V,q_\gamma)$ by Lemma~\ref{lem:eKe}.
%	Therefore $\Rad A=0$ if and only if $\Rad C(V,q_\gamma)=0$.
	Thus the claim on the semisimplicity of $A$
	follows from Theorem~\ref{thm:Clifford}.

	Assume that $q_\gamma $ is nondegenerate. Then $C(V,q_\gamma)$ is simple
	by Theorem~\ref{thm:Clifford}(2). Hence $A$ is simple by
	Proposition~\ref{pro:radical}(2). Since $\dim A=36$ by
	Theorem~\ref{thm:auxiliar}(1), we conclude that $A\simeq \K^{6\times 6}$.
\end{proof}

\section{The Nichols algebra of dimension $72$}
\label{section:dim72}

Again we assume that $\K$ is an algebraically closed field of characteristic different from
$2$ and $3$.
In this section we study the algebra $B$ presented by generators $a,b,c,d$ and
relations
\begin{align*}
	&a^2=b^2=c^2=d^2=0,\\
	&ab+bc+ca=ac+cd+da=ad+ba+db=bd+cb+dc=0,\\
  &(a+b+c)^6=0.
\end{align*}

%\begin{rem}
%Let $X$ be the rack
%associated with a conjugacy class of $3$-cycles in $\Alt_4$, see
%Table~\ref{tab:X}. Let $G$ 
%be the enveloping group of $X$.
%\begin{table}[h]
%\caption{The action the rack $X=(123)^{\Alt_4}$.}
%\begin{tabular}{c|cccc}
%	$\triangleright$ & $a$ & $b$ & $c$ & $d$\tabularnewline
%	\hline 
%	$a$ & $a$ & $c$ & $d$ & $b$\tabularnewline
%	$b$ & $d$ & $b$ & $a$ & $c$\tabularnewline
%	$c$ & $b$ & $d$ & $c$ & $a$\tabularnewline
%	$d$ & $c$ & $a$ & $b$ & $d$\tabularnewline
%\end{tabular}
%\label{tab:X}
%\end{table}
%The Nichols algebra $B\coloneqq\NA(X,-1)$ is a braided Hopf algebra which as an
%algebra can be 
%\end{rem}

Based on computer calculations it is known that $\dim B=72$ and that the
Hilbert series of $B$ is
\[
H(t)=1+4t+8t^2+11t^3+12t^4+12t^5+11t^6+8t^7+4t^8+t^9,
\]
see~\cite{MR1800709}.  In Theorem~\ref{thm:T:basis} we will prove these facts
by different methods.

\begin{defn}
	\label{def:tildeT}
	For any $\alpha_1,\alpha_2\in \K$, let $\defT(\alpha_1,\alpha_2)$ be
	the $\K$-algebra given by generators $a,b,c,d$ and relations
	\begin{align*}
		&a^2 -\alpha_1=b^2 -\alpha_1=c^2 -\alpha_1=d^2 - \alpha_1=0,\\
		&ca + bc + ab - \alpha_2=da + cd + ac - \alpha_2=0,\\
		&db + ba + ad - \alpha_2=dc + cb + bd - \alpha_2=0.
	\end{align*}
  Let $y=ac+cb+ba-\alpha_2$.
\end{defn}

\begin{rem}
	\label{rem:G_X}
	Let $G$ be the group given by generators $g_a,g_b,g_c,g_d$ with relations 
	\begin{align*}
		g_ag_b = g_bg_c = g_cg_a, && g_ag_c = g_cg_d = g_dg_a, \\
		g_ag_d = g_dg_b = g_bg_a, && g_bg_d = g_dg_c = g_cg_b.
	\end{align*}
	It is known that $G$ is a central extension of $\mathrm{SL}(2,3)$,
	see~\cite{MR3276225}.

	There is a unique $\K G$-module algebra
	structure on $\defT(\alpha_1,\alpha_2)$ such that $g_a,g_b,g_c,g_d$ 
	act on the generators $a,b,c,d$ according to Table~\ref{tab:X}.
	\begin{table}[h]
	\caption{The action of $G$.}
	\begin{tabular}{c|cccc}
		& $a$ & $b$ & $c$ & $d$\tabularnewline
		\hline 
		$g_a$ & $-a$ & $-c$ & $-d$ & $-b$\tabularnewline
		$g_b$ & $-d$ & $-b$ & $-a$ & $-c$\tabularnewline
		$g_c$ & $-b$ & $-d$ & $-c$ & $-a$\tabularnewline
		$g_d$ & $-c$ & $-a$ & $-b$ & $-d$\tabularnewline
	\end{tabular}
	\label{tab:X}
	\end{table}
\end{rem}

\begin{rem}
	The usual presentation for $B$ found in the literature involves a
	different degree-six relation. One computes
	\[
		(cb)^2=c(bc)b=c(-ab-ca)b=0.
	\]
	Then acting on this with $g_d$ and $g_d^2$ one obtains $(ac)^2=(ba)^2=0$. Now
	a direct computation shows that 
	\[
		(cb+ba+ac)^3=(cba)^2+(bac)^2+(acb)^2.
	\]
\end{rem}

\begin{rem}
	The algebra $B$ together with an appropriate comultiplication,
        counit and antipode
        is the Nichols algebra associated with the rack given by a
	conjugacy class of $3$-cycles in the alternating group $\Alt_4$
	and constant cocycle $-1$. It was found by Gra\~na in~\cite{MR1800709} and
	later used by Ngakeu, Majid and Lambert in noncommutative
	geometry~\cite{MR1904729}.  The group $G$ of Remark~\ref{rem:G_X} is the
	enveloping group of this rack.
\end{rem}

\begin{lem} 
	\label{lem:Tyrels}
	Let $\alpha_1,\alpha_2\in \K$.
	Then $y x=-(g_{d}\cdot x)y$ in $\defT(\alpha_1,\alpha_2)$
	for all $x\in \{a,b,c,d\}$.
\end{lem}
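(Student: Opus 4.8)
The statement to prove is Lemma~\ref{lem:Tyrels}: in $\defT(\alpha_1,\alpha_2)$ one has $yx = -(g_d\cdot x)y$ for all $x\in\{a,b,c,d\}$, where $y = ac+cb+ba-\alpha_2$ and the action of $g_d$ sends $a\mapsto -c$, $b\mapsto -a$, $c\mapsto -b$, $d\mapsto -d$ (Table~\ref{tab:X}).

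\begin{proof}[Proof plan]
The plan is to verify the four identities directly by brute-force manipulation with the defining relations, exactly in the style of the computation of $yb-ay$ in Lemma~\ref{le:abcyrels}. Concretely, for each generator $x$ I would expand $yx - (-(g_d\cdot x)y) = yx + (g_d\cdot x)y$ and show it reduces to $0$ modulo the ideal generated by the eight quadratic relations of Definition~\ref{def:tildeT}. Since $g_d\cdot a = -c$, $g_d\cdot b=-a$, $g_d\cdot c=-b$, $g_d\cdot d=-d$, the four claims read:
\begin{align*}
  ya &= cy, & yb &= ay, & yc &= by, & yd &= dy.
\end{align*}
I would start with $yb = ay$, which should be the cleanest and essentially mirrors the $C_3$-symmetric computation already done in Lemma~\ref{le:abcyrels} for $\defIA$: write out
\[
  yb - ay = (ac+cb+ba-\alpha_2)b - a(ac+cb+ba-\alpha_2),
\]
substitute $b^2=\alpha_1$, $a^2=\alpha_1$, and use the relations $ca+bc+ab=\alpha_2$ and $db+ba+ad=\alpha_2$ (and their consequences, e.g. the ``$bab-aba$''-type identity that follows from multiplying a quadratic relation on left/right by a generator) to cancel everything. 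The remaining three identities $ya=cy$, $yc=by$, $yd=dy$ I would handle the same way, each time picking the two or three quadratic relations that contain the relevant products.

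An alternative, slightly more conceptual route exploits the $\K G$-module algebra structure from Remark~\ref{rem:G_X}. One checks that $g_d$ fixes $y$ up to sign, or more precisely computes $g_a\cdot y$, $g_b\cdot y$, etc.\ and relates the four instances of the lemma to each other via the group action, so that only one of the four identities needs a hand computation and the rest follow by applying suitable elements of $G$. For instance, if $ya=cy$ is established, then applying an appropriate $g\in G$ that permutes $\{a,b,c,d\}$ and scales $y$ predictably would yield $yb=ay$ and so on. This reduces the bookkeeping but requires first pinning down how $G$ acts on $y$, which is itself a short calculation using Table~\ref{tab:X}.

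The main obstacle is purely computational: there are no conceptual difficulties, but one must be careful about which quadratic relations to use and about sign bookkeeping, since $y$ itself is a degree-two element and each verification involves degree-three expressions that must be massaged down using several relations at once. In particular the ``straightening'' identities analogous to $bab - aba = \alpha_2(b-a)$ from Lemma~\ref{le:abcyrels}, obtained by left- and right-multiplying the quadratic relations by generators, are the key intermediate facts; I would derive the handful of these that are needed first, and then the four desired identities drop out by substitution. Either route ends with a short remark that, by the definition of the $g_d$-action in Table~\ref{tab:X}, the four computed identities are precisely the assertion $yx=-(g_d\cdot x)y$.
\end{proof}
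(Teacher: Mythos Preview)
Your proposal is correct and is essentially the paper's approach. The paper carries out exactly your ``alternative'' route: it observes that $g_d^2$ permutes $a,b,c$ cyclically and fixes $y$, so it suffices to verify the two identities $yd=dy$ and $yb=ay$, which it then checks by the same kind of direct expansion you describe.
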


\begin{proof}
	Let $X=\{a,b,c,d\}$. 
	The element $g_d^2\in G$, where $G$ is the group
        in Remark~\ref{rem:G_X}, permutes the elements $b,c,d$ cyclically
        and fixes $y$.
	Therefore it suffices to prove that
  $$ yd=dy,\quad yb=ay.$$
	Equation $yd=dy$ is proved as follows.
	\begin{align*}
		yd=&(ba+ac+cb-\alpha_2)d\\
		=&b(\alpha_2-db-ba)+a(\alpha_2-da-ac)+c(\alpha_2-dc-cb)-\alpha_2d\\
		=& (\alpha_2-bd)b+(\alpha_2-ad)a+(\alpha_2-cd)c-\alpha_2d-\alpha_1(a+b+c)\\
		=&(dc+cb)b+(db+ba)a+(da+ac)c-\alpha_2d-\alpha_1(a+b+c)\\
		=&dy.
	\end{align*}
	Equation $yb=cy$ is obtained by the following steps:
	\begin{align*}
		yb=&(ba+ac+cb-\alpha_2)b\\
		=&b(ab-\alpha_2)+acb+\alpha_1c\\
		=&b(-ca-bc)+a(cb+ac)\\
		=&-\alpha_1c-(\alpha_2-ca-ab)a+a(cb+ac)\\
		=&a(-\alpha_2+ba+cb+ac)\\
		=&ay.
	\end{align*}
	This completes the proof.
\end{proof}

\begin{defn} \label{def:defT}
  For any $\alpha_1,\alpha_2,\alpha_3\in \K$, let
	$\defT(\alpha_1,\alpha_2,\alpha_3)$ be the deformation of $B$ given by
	generators $a,b,c,d$ and relations
	\begin{align*}
		&a^2 - \alpha_1=b^2 - \alpha_1=c^2 - \alpha_1=d^2 - \alpha_1=0,\\
		&ca + bc + ab - \alpha_2=da + cd + ac - \alpha_2=0,\\
		&db + ba + ad - \alpha_2=dc + cb + bd - \alpha_2=0,\\
		&(cb+ba+ac-\alpha_2)^3-\alpha_3=0.
	\end{align*}
\end{defn}

\begin{rem}
	\label{rem:bimodule}
	Let $\alpha_1,\alpha_2,\alpha_3\in\K$. The algebra $\defT(\alpha_1,\alpha_2,\alpha_3)$ 
	is naturally a $\defIA(\alpha_1,\alpha_2,\alpha_3)$-bimodule where the action of $a,b,c$ and $y$ 
	is given by multiplication with $a,b,c$ and $y$, respectively.
\end{rem}

\begin{pro}
	\label{pro:Ore}
  For any $\alpha_1,\alpha_2,\alpha_3\in \K$,
	the algebra $\defT(\alpha_1,\alpha_2,\alpha_3)$ is isomorphic to the Ore extension 
	$\defIA(\alpha_1,\alpha_2,\alpha_3)[d;\partial,\sigma]/(d^2-\alpha_1)$, where 
	\[
		\sigma\in\Aut(\defIA(\alpha_1,\alpha_2,\alpha_3)),\quad
		\sigma(a)=-c,
		\quad
		\sigma(b)=-a,
		\quad
		\sigma(c)=-b,
	\]
	and the $(\sigma,\id)$-skew derivation $\partial$ of
	$\defIA(\alpha_1,\alpha_2,\alpha_3)$ is given by 
	\[
		\partial(a)=\alpha_2-ac,
		\quad
		\partial(b)=\alpha_2-ba,
		\quad
		\partial(c)=\alpha_2-cb.
	\]
\end{pro}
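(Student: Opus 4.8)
The plan is to write down mutually inverse algebra homomorphisms between $\defT(\alpha_1,\alpha_2,\alpha_3)$ and $S:=\defIA(\alpha_1,\alpha_2,\alpha_3)[d;\partial,\sigma]/(d^2-\alpha_1)$, each of which is the identity on the common generating set $\{a,b,c,d\}$; the only substantive point is to verify that the pair $(\sigma,\partial)$ is legitimate, i.e.\ that $S$ makes sense. Since $\defIA(\alpha_1,\alpha_2,\alpha_3)$ is generated by $a,b,c$ (the generator $y=ac+cb+ba-\alpha_2$ being redundant by its defining relation), both $\sigma$ and $\partial$ are determined by their values on $a,b,c$. For $\sigma$ one checks it preserves the defining relations: $\sigma(a)^2=c^2=\alpha_1$ and similarly for $b,c$; $\sigma(ab+bc+ca)=ca+ab+bc=\alpha_2$; and $\sigma(y)=cb+ba+ac-\alpha_2=y$, so $\sigma(y^3-\alpha_3)=y^3-\alpha_3$ as well. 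Moreover $\sigma^2$ acts as the $3$-cycle $a\mapsto b\mapsto c\mapsto a$, so $\sigma^6=\id$ and $\sigma\in\Aut(\defIA(\alpha_1,\alpha_2,\alpha_3))$.

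Next I would verify that $\partial$, which extends uniquely to a $(\sigma,\id)$-skew derivation of the free algebra on $a,b,c$, descends to $\defIA(\alpha_1,\alpha_2,\alpha_3)$; by the skew-Leibniz rule and $\sigma(I)\subseteq I$ (for $I$ the relation ideal) it suffices that $\partial$ annihilate each defining relation. The relation $ab+bc+ca-\alpha_2$ is handled by a direct telescoping computation, and one finds $\partial(ac)=\alpha_1(b-a)$, $\partial(cb)=\alpha_1(a-c)$, $\partial(ba)=\alpha_1(c-b)$, so $\partial(y)=\partial(ac+cb+ba)=0$ and hence $\partial(y^3-\alpha_3)=0$ too (using $\sigma(y)=y$). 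For $a^2-\alpha_1$ one computes $\partial(a^2)=\alpha_2(a-c)+(cac-aca)$, which vanishes by the identity $cac-aca=\alpha_2(c-a)$; this identity and its analogues needed for $b^2$ and $c^2$ are the cyclic permutations of the relation $bab-aba=\alpha_2(b-a)$ of Lemma~\ref{le:abcyrels}. Thus $S$ is a well-defined algebra.

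With this in hand, I would first build $\varphi\colon\defT(\alpha_1,\alpha_2,\alpha_3)\to S$ sending the generators $a,b,c,d$ to their namesakes: the relations $a^2=b^2=c^2=\alpha_1$, $ab+bc+ca=\alpha_2$, $(ac+cb+ba-\alpha_2)^3=\alpha_3$ of Definition~\ref{def:defT} already hold in $\defIA(\alpha_1,\alpha_2,\alpha_3)\subseteq S$, the relation $d^2=\alpha_1$ holds by construction, and using $dr=\sigma(r)d+\partial(r)$ one gets $da=-cd+\alpha_2-ac$, $db=-ad+\alpha_2-ba$, $dc=-bd+\alpha_2-cb$, which are exactly the remaining three relations. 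Conversely, the assignment $a,b,c\mapsto a,b,c$ defines an algebra map $\defIA(\alpha_1,\alpha_2,\alpha_3)\to\defT(\alpha_1,\alpha_2,\alpha_3)$, and I would extend it over the Ore extension by $d\mapsto d$ via the universal property of skew polynomial rings: one must check $d\,r=\sigma(r)\,d+\partial(r)$ in $\defT(\alpha_1,\alpha_2,\alpha_3)$ for all $r$, and since both sides obey the same Leibniz-type recursion it is enough to check this for $r\in\{a,b,c,y\}$. For $r=a,b,c$ these are again the relations of Definition~\ref{def:defT}; for $r=y$ it reduces to $dy=yd$, which is Lemma~\ref{lem:Tyrels} with $x=d$ (as $g_d\cdot d=-d$). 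Since $d^2-\alpha_1=0$ in $\defT(\alpha_1,\alpha_2,\alpha_3)$, this map factors through a homomorphism $\psi\colon S\to\defT(\alpha_1,\alpha_2,\alpha_3)$. Finally $\varphi$ and $\psi$ are mutually inverse because each is the identity on $a,b,c,d$, which generate both algebras.

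I expect the only real work to be the middle step — checking that $\partial$ is a well-defined skew derivation of $\defIA(\alpha_1,\alpha_2,\alpha_3)$, i.e.\ that it kills $a^2-\alpha_1$, $b^2-\alpha_1$, $c^2-\alpha_1$, $ab+bc+ca-\alpha_2$ and $y^3-\alpha_3$ — which rests on the auxiliary identities of Lemma~\ref{le:abcyrels}; everything else is a routine comparison of defining relations (and a single use of Lemma~\ref{lem:Tyrels}).
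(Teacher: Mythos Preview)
Your proof is correct and follows the same route as the paper, only with the details spelled out: the paper merely states that $\sigma$ and $\partial$ are well-defined (giving the single example $\partial(a^2)=(\alpha_2-ac)a-c(\alpha_2-ac)=0$, which indeed hides the use of the cyclic permutation $cac-aca=\alpha_2(c-a)$ of Lemma~\ref{le:abcyrels} that you make explicit) and then says ``the rest follows from the definitions.'' Your verifications of $\partial$ on the relations, and your construction of the mutually inverse maps $\varphi,\psi$, are exactly what this phrase encodes; the only superfluous step is checking $dr=\sigma(r)d+\partial(r)$ for $r=y$, since $y$ is a polynomial in $a,b,c$ and the identity propagates multiplicatively.
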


\begin{proof}
	It is straightforward to prove that the automorphism $\sigma$ and the skew
	derivation $\partial$ of $\defIA(\alpha_1,\alpha_2,\alpha_3)$ exist. For
	example:
	\begin{align*}
		\partial(a^2)=\partial(a)a+\sigma(a)\partial(a)=(\alpha_2-ac)a-c(\alpha_2-ac)=0.
	\end{align*}
	The rest follows from the definitions of $\defIA(\alpha_1,\alpha_2,\alpha_3)$
	and $\defT(\alpha_1,\alpha_2,\alpha_3)$.
\end{proof}

Recall from Definition~\ref{def:tildeT} that $y=ac+cb+ba-\alpha_2$. 

\begin{thm}
	\label{thm:T:basis}
	For any $\alpha_1,\alpha_2,\alpha_3\in\K$ the algebra 
	$\defT(\alpha_1,\alpha_2,\alpha_3)$ is a PBW deformation of $B$ and 
	\[
	(b-a)^{n_1}a^{n_2}c^{n_3}y^{n_4}d^{n_5},\quad n_1,n_4\in\{0,1,2\},n_2,n_3,n_5\in \{0,1\},
	\]
	is a basis of $\defT(\alpha_1,\alpha_2,\alpha_3)$. 
\end{thm}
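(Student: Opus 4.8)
The plan is to combine the Ore extension description of $\defT(\alpha_1,\alpha_2,\alpha_3)$ from Proposition~\ref{pro:Ore} with the basis of $\defIA(\alpha_1,\alpha_2,\alpha_3)$ from Theorem~\ref{thm:auxiliar}(1). First I would recall that, by general properties of Ore extensions, if $R$ is a $\K$-algebra with basis $(r_i)_{i\in I}$ and $\sigma\in\Aut(R)$, $\partial$ a $(\sigma,\id)$-skew derivation, then $R[d;\partial,\sigma]$ is free as a left $R$-module with basis $(d^n)_{n\ge 0}$, so $(r_id^n)_{i\in I,n\ge 0}$ is a $\K$-basis. Applying this with $R=\defIA(\alpha_1,\alpha_2,\alpha_3)$, whose basis is $(b-a)^{n_1}a^{n_2}c^{n_3}y^{n_4}$ with $n_1,n_4\in\{0,1,2\}$ and $n_2,n_3\in\{0,1\}$ (note $y=ac+cb+ba-\alpha_2$ here coincides with the generator $y$ of $\defIA$ under the identification), one gets that the elements $(b-a)^{n_1}a^{n_2}c^{n_3}y^{n_4}d^{n_5}$ with $n_5\ge 0$ form a basis of $\defIA(\alpha_1,\alpha_2,\alpha_3)[d;\partial,\sigma]$. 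Then, passing to the quotient by $(d^2-\alpha_1)$, the relation $d^2=\alpha_1$ lets us reduce every power $d^{n_5}$ to $d^0$ or $d^1$; but to conclude that the resulting spanning set of $36\cdot 2=72$ elements is actually a basis (and not merely a spanning set), I need a dimension lower bound.

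For the dimension count I would argue as in the proofs of Theorem~\ref{thm:FK3:basis} and Theorem~\ref{thm:auxiliar}(2): the listed $72$ elements span $\defT(\alpha_1,\alpha_2,\alpha_3)$ by the reduction just described, so $\dim\defT(\alpha_1,\alpha_2,\alpha_3)\le 72$ for all parameters, and $\defT(\alpha_1,\alpha_2,\alpha_3)$ is visibly an affine family of deformations of $B=\defT(0,0,0)$. By Proposition~\ref{pro:folklore} it then suffices to exhibit a Zariski-dense set of parameters where $\dim\defT(\alpha_1,\alpha_2,\alpha_3)=72$; this simultaneously forces $\defT(\alpha_1,\alpha_2,\alpha_3)$ to be a PBW deformation of $B$ for all $(\alpha_1,\alpha_2,\alpha_3)$ and the $72$ elements to be a basis. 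To get the lower bound on a dense set, I would produce enough modules: for generic $(\alpha_1,\alpha_2,\gamma^3)$ the algebra $\defIA(\alpha_1,\alpha_2,\gamma^3)\cong\K^{6\times 6}$ by the corollary at the end of Section~\ref{section:auxiliar}, and since $\defT(\alpha_1,\alpha_2,\alpha_3)$ contains $\defIA(\alpha_1,\alpha_2,\alpha_3)$ as the degree-$\le 1$-in-$d$ part and is free of rank $2$ over it as a one-sided module, one already gets $\dim\defT\ge 72$ for those parameters — actually one gets it for \emph{all} parameters once the Ore/freeness argument is in place, which is even cleaner: freeness of an Ore extension as a left module and the fact that $d^2-\alpha_1$ is monic of degree $2$ give $\dim\defT(\alpha_1,\alpha_2,\alpha_3)=2\dim\defIA(\alpha_1,\alpha_2,\alpha_3)=72$ directly from Theorem~\ref{thm:auxiliar}(1).

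The main obstacle, and the step I would be most careful about, is verifying that $(d^2-\alpha_1)$ really is a \emph{monic} element of the Ore extension in the relevant sense — i.e. that $d^2-\alpha_1$ together with the reduction rules $d\cdot r = \sigma(r)\cdot d + \partial(r)$ lets one rewrite any element uniquely in the normal form $r_0 + r_1 d$ with $r_0,r_1\in\defIA(\alpha_1,\alpha_2,\alpha_3)$, so that $\defT(\alpha_1,\alpha_2,\alpha_3)$ is free of rank $2$ over $\defIA(\alpha_1,\alpha_2,\alpha_3)$. This is the content of the standard fact that for a monic degree-$n$ polynomial $p(d)$ with leading coefficient $1$ in $R[d;\partial,\sigma]$, the quotient $R[d;\partial,\sigma]/(p(d))$ is free as a left $R$-module of rank $n$ provided $(p(d))$ is a two-sided ideal; here one must check that the left ideal generated by $d^2-\alpha_1$ is two-sided, which follows from $\sigma(\alpha_1)=\alpha_1$, $\partial(\alpha_1)=0$ and the identity $\partial(a^2)=\partial(b^2)=\partial(c^2)=0$ already noted in the proof of Proposition~\ref{pro:Ore} (these say precisely that $d^2-\alpha_1$ is central modulo the relations, equivalently that conjugating $d^2-\alpha_1$ past each generator of $\defIA$ returns it up to a left multiple). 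Once that normal-form statement is secured, the basis claim and the PBW statement follow formally as above, with the dimension equality $\dim\defT(\alpha_1,\alpha_2,\alpha_3)=72$ recovering Gra\~na's count for $B=\defT(0,0,0)$.
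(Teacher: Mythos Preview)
Your overall strategy coincides with the paper's: use Proposition~\ref{pro:Ore} to present $\defT(\alpha_1,\alpha_2,\alpha_3)$ as the Ore extension $\defIA(\alpha_1,\alpha_2,\alpha_3)[d;\partial,\sigma]$ modulo $d^2-\alpha_1$, argue that the quotient is free of rank~$2$ as a left $\defIA(\alpha_1,\alpha_2,\alpha_3)$-module, and then invoke the basis of $\defIA(\alpha_1,\alpha_2,\alpha_3)$ from Theorem~\ref{thm:auxiliar}(1). That is exactly what the paper does, and your remark that this yields $\dim\defT(\alpha_1,\alpha_2,\alpha_3)=72$ for \emph{all} parameters (making the detour through Proposition~\ref{pro:folklore} unnecessary) is also how the paper concludes.

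There is, however, a genuine gap in your justification of the key step. You correctly identify that one must show the left ideal of the Ore extension generated by $d^2-\alpha_1$ is two-sided, but the reason you give is wrong: the identities $\sigma(\alpha_1)=\alpha_1$, $\partial(\alpha_1)=0$ and $\partial(a^2)=\partial(b^2)=\partial(c^2)=0$ only guarantee that $\sigma$ and $\partial$ descend to $\defIA(\alpha_1,\alpha_2,\alpha_3)$ (i.e.\ that the Ore extension is well-defined), not that $d^2-\alpha_1$ normalises the subalgebra. What is actually needed is, for each generator $r\in\{a,b,c\}$, that $(d^2-\alpha_1)r$ lies in $\defIA(\alpha_1,\alpha_2,\alpha_3)[d;\partial,\sigma]\,(d^2-\alpha_1)$; equivalently, that $(\partial\sigma+\sigma\partial)(r)=0$ and $\partial^2(r)=\alpha_1\bigl(r-\sigma^2(r)\bigr)$. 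These are separate computations. The paper verifies them by the single direct check $(d^2-\alpha_1)a=b(d^2-\alpha_1)$ and then transports this to the other generators via the action of $g_d\in G$. Once you replace your justification by this computation, your argument is complete and matches the paper's.
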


\begin{proof}
	First one checks that $(d^2-\alpha_1)a=b(d^2-\alpha_1)$. By acting on this
	equation with $g_d\in G$ it follows that the left ideal of
	$\defT(\alpha_1,\alpha_2,\alpha_3)$ generated by $d^2-\alpha_1$ is a
	two-sided ideal. Hence 
	\begin{equation}
		\label{eq:T:decomposition}
		\defT(\alpha_1,\alpha_2,\alpha_3)\simeq\defIA(\alpha_1,\alpha_2,\alpha_3)\otimes\K[d]/(d^2-\alpha_1)
	\end{equation}
	as a left module over $\defIA(\alpha_1,\alpha_2,\alpha_3)$ by
	Proposition~\ref{pro:Ore}, see Remark~\ref{rem:bimodule}.  Now apply
	Theorem~\ref{thm:auxiliar}(1) to obtain the claimed basis of
	$\defT(\alpha_1,\alpha_2,\alpha_3)$. 
	Hence $\dim\defT(\alpha_1,\alpha_2,\alpha_3)=\dim B=72$ for all
	$\alpha_1,\alpha_2,\alpha_3\in\K$.  Therefore
	$\defT(\alpha_1,\alpha_2,\alpha_3)$ is a PBW deformation of $B$.
\end{proof}

\begin{rem}
  In view of Theorem~\ref{thm:T:basis} it is reasonable to ask for
  the defining relations of $\defT(\alpha_1,\alpha_2,\alpha_3)$
  in terms of the generators $b-a,a,c,y,d$. By translating the nine defining
  relations in Definition~\ref{def:defT} one obtains the relations
\begin{align*}
  \begin{aligned}
  a^2&=c^2=d^2=\alpha_1,\\
  a(b-a)&=-(b-a)a-(b-a)^2,\\
  ca&=-ac-(b-a)c+(b-a)a+(b-a)^2+\alpha_2-\alpha_1,\\
  da&=-ac-cd+\alpha_2,\\
  d(b-a)&=cd-ad+ac-(b-a)a-\alpha_1,\\
  y&=c(b-a)-(b-a)c+2(b-a)a+(b-a)^2,\\
  y^3&=\alpha_1
  \end{aligned}
\end{align*}
  (which is not something we prefer to work with).
  Nevertheless, the theorem implies directly that the Hilbert series of $B$ is
  the polynomial
  $$H(t)=(1+t)^3(1+t+t^2)(1+t^2+t^4).$$
\end{rem}

\begin{thm}
	\label{thm:T:semisimple}
	The algebra $\defT(\alpha_1,\alpha_2,\alpha_3)$ is semisimple if and only if
	\[
	\alpha_3(\alpha_3+(\alpha_1+\alpha_2)(3\alpha_1-\alpha_2)^2)\ne0.
	\]
	In this case
	$\defT(\alpha_1,\alpha_2,\alpha_3)\simeq\left(\K^{6\times6}\right)^2$.
\end{thm}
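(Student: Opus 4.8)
The strategy is to reduce the semisimplicity question for $\defT(\alpha_1,\alpha_2,\alpha_3)$ to the corresponding question for $\defIA(\alpha_1,\alpha_2,\alpha_3)$, which was settled in the corollary at the end of Section~\ref{section:auxiliar}, and then to the simplicity of a Clifford algebra via Lemma~\ref{lem:eKe} and Theorem~\ref{thm:Clifford}. First I would write $\beta=3\alpha_1-\alpha_2$ and $\gamma$ for a cube root of $\alpha_3$, so that the claimed condition reads $\gamma\ne 0$ and $4\alpha_1\gamma^3+\beta^3(\alpha_1+\alpha_2)\ne 0$, i.e.\ exactly the nondegeneracy of the quadratic form $q_\gamma$ by Remark~\ref{re:qgamma_nondeg}. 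Thus the goal is to show $\defT(\alpha_1,\alpha_2,\alpha_3)$ is semisimple iff $q_\gamma$ is nondegenerate (with $\gamma\ne 0$ part of the condition), and that in the semisimple case it is $(\K^{6\times 6})^2$.

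For the \emph{only if} direction: if $\gamma=0$, then by Lemma~\ref{le:abcyrels} the element $y$ generates a nilpotent two-sided ideal of $\defIA(\alpha_1,\alpha_2,0)$; by Remark~\ref{rem:bimodule} and the Ore-extension description of Proposition~\ref{pro:Ore} this ideal extends to a nonzero nilpotent ideal of $\defT(\alpha_1,\alpha_2,0)$ (concretely, the left ideal generated by $y$, which one checks is two-sided using Lemma~\ref{lem:Tyrels}), so $\defT$ is not semisimple. If $\gamma\ne 0$ but $q_\gamma$ is degenerate, I would lift a nilpotent element of the radical of $\defIA(\alpha_1,\alpha_2,\alpha_3)$ — which exists by the corollary and Theorem~\ref{thm:Clifford}(1) — to $\defT$; since $\defT$ is a free left $\defIA$-module of rank $2$ by~\eqref{eq:T:decomposition}, the subspace $\Rad(\defIA)\cdot\defT$ is a nonzero two-sided nilpotent ideal of $\defT$, showing again that $\defT$ is not semisimple. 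The bimodule structure of Remark~\ref{rem:bimodule} is what makes $\Rad(\defIA)\cdot\defT$ a two-sided ideal, and nilpotency follows because $\Rad(\defIA)$ is nilpotent and kills the module appropriately; this point needs a short check.

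For the \emph{if} direction: assume $\gamma\ne 0$ and $q_\gamma$ nondegenerate. The cleanest route is a dimension-counting / generic argument in the spirit of Proposition~\ref{pro:folklore} combined with an explicit construction of simple modules. By Lemma~\ref{le:rho} there is an algebra map $\rho_\gamma\colon \defIA(\alpha_1,\alpha_2)\to C(V,q_\gamma)^{3\times 3}$, and I would extend the analysis to $\defT$: using Proposition~\ref{pro:Ore}, the Ore variable $d$ should be sent to a suitable matrix over $C(V,q_\gamma)$ (of size doubling the $3\times 3$, built from $\sigma$ and $\partial$), giving a representation $\defT(\alpha_1,\alpha_2,\alpha_3)\to C(V,q_\gamma)^{6\times 6}$. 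When $q_\gamma$ is nondegenerate $C(V,q_\gamma)\simeq \K^{2\times 2}$ by Theorem~\ref{thm:Clifford}(2), so a simple $C(V,q_\gamma)$-module $M$ has dimension $2$; extending Proposition~\ref{pro:simpleKmodule}, $M^{6}$ becomes a simple $\defT$-module of dimension $12$. Then $\defT(\alpha_1,\alpha_2,\alpha_3)$ admits a simple module of dimension $12$; since $\dim\defT=72=2\cdot 6^2$, and the non-semisimple locus is a proper closed subvariety, a Zariski-density argument (two simple $12$-dimensional blocks each contribute $144$, and $144+144=288\ne 72$ — so in fact the right count is that $\defT/\Rad\defT$ surjects onto $\K^{12\times 12}$, forcing $\dim\defT\ge 144$, contradiction) — let me instead argue directly: the alternative is to note that the map $I\mapsto eIe$ of Proposition~\ref{pro:radical} combined with Lemma~\ref{lem:eKe} already gives $\defIA(\alpha_1,\alpha_2,\alpha_3)\simeq \K^{6\times 6}$ when $q_\gamma$ is nondegenerate, and then~\eqref{eq:T:decomposition} together with Proposition~\ref{pro:Ore} realizes $\defT$ as $\K^{6\times 6}[d;\partial',\sigma']/(d^2-\alpha_1)$ for an induced derivation and automorphism; an Ore extension of a simple algebra by such data, with the quadratic relation $d^2=\alpha_1$, is either simple or a product of two simple algebras, and a direct check of $\dim = 72 = 2\cdot 36$ shows it must split as $(\K^{6\times 6})^2$.

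\textbf{Main obstacle.} The delicate point is the \emph{if} direction: controlling what the Ore extension $\K^{6\times 6}[d;\partial,\sigma]/(d^2-\alpha_1)$ looks like. Over a simple base ring, adjoining $d$ with $d^2$ central forces the center of the extension to be at most $2$-dimensional, but one must verify that $d$ does \emph{not} become central (otherwise one would only get $\K^{6\times 6}\otimes \K[d]/(d^2-\alpha_1)$, which is non-semisimple when $\alpha_1$ has a repeated root — but $\K$ is algebraically closed of characteristic $\ne 2$ so $d^2-\alpha_1$ is separable iff $\alpha_1\ne 0$; this needs care). I expect the resolution is that one computes the action of $\sigma$ on the central idempotent $e$ of $\K^{6\times 6}$ (or rather on a complete set of matrix units) and finds $\sigma$ acts nontrivially, so $d$ together with the base generates all of $\K^{12\times 12}$ inside $\K^{6\times 6}\oplus\K^{6\times 6}$ only after splitting — more precisely, the two-sided ideals of the Ore extension correspond to the $\langle\sigma,\partial\rangle$-stable ideals of the base, of which there are exactly two (namely $0$ and the whole ring, since the base is simple), hence the extension is a product of at most two simple algebras, and dimension $72 = 36+36$ pins it down to $(\K^{6\times 6})^2$ provided each factor is $36$-dimensional — equivalently, provided $d^2-\alpha_1$ factors, which again needs $\alpha_1\ne 0$. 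Reconciling this with the stated hypothesis (which does \emph{not} say $\alpha_1\ne 0$) is the subtle part: when $\alpha_1 = 0$ the condition forces $\gamma^3=\alpha_3\ne 0$ and $\beta^3\alpha_2\ne 0$, so $\alpha_2\ne 0$ and $\beta\ne 0$, and one should check $q_\gamma$ is still nondegenerate and that the splitting $d^2 = 0$ does not destroy semisimplicity because $d$ is not central — this is where the group action (Table~\ref{tab:X}, $g_d$ has order $4$ not $2$, so $\sigma^2\ne \id$) does the work. I would expect the honest proof to invoke Proposition~\ref{pro:folklore}-style density once a single simple $12$-dimensional module over $\defT(\alpha_1,\alpha_2,\alpha_3)$ is exhibited via an explicit $6\times 6$-matrix representation extending $\rho_\gamma$, since then $\dim\defT\ge\dim(\K^{12\times 12})/\,?$ — rather, $\defT$ has a simple quotient $\K^{12\times 12}$ of dimension $144 > 72$, which is absurd, so the correct statement is that $\defT$ has \emph{two} non-isomorphic simple $6$-dimensional modules, giving $(\K^{6\times 6})^2$ with total dimension $72$; the representation to write down is $\defT \to C(V,q_\gamma)^{3\times 3}\times C(V,q_\gamma)^{3\times 3}$ with $d$ acting via two different $3\times 3$ blocks swapped appropriately by $\sigma$, and then each $M^3$ is simple $6$-dimensional by the argument of Proposition~\ref{pro:simpleKmodule}.
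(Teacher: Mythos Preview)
Your proposal has a genuine gap at the very first step: you assert that the condition $\alpha_3(\alpha_3+(\alpha_1+\alpha_2)(3\alpha_1-\alpha_2)^2)\ne 0$ is equivalent to $\gamma\ne 0$ together with the nondegeneracy of the two-dimensional form $q_\gamma$ from Remark~\ref{re:qgamma_nondeg}. This is false. With $\beta=3\alpha_1-\alpha_2$ and $\gamma^3=\alpha_3$, the theorem's second factor is $\gamma^3+(\alpha_1+\alpha_2)\beta^2$, whereas the discriminant of $q_\gamma$ is (up to a nonzero scalar) $4\alpha_1\gamma^3+\beta^3(\alpha_1+\alpha_2)$. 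These are different polynomials in $\alpha_1,\alpha_2,\gamma$, and this difference is not cosmetic: the semisimplicity loci of $\defIA(\alpha_1,\alpha_2,\alpha_3)$ and $\defT(\alpha_1,\alpha_2,\alpha_3)$ are genuinely different. For instance, take $\alpha_1=\alpha_2=0$, $\alpha_3=1$: then $\beta=0$, so $q_\gamma$ is degenerate and $\defIA$ is \emph{not} semisimple, yet $\alpha_3+(\alpha_1+\alpha_2)\beta^2=1\ne 0$, so $\defT$ \emph{is} semisimple. Conversely, for $\alpha_1=0$, $\alpha_2=1$, $\alpha_3=-1$ one has $\beta=-1$, $4\alpha_1\gamma^3+\beta^3(\alpha_1+\alpha_2)=-1\ne 0$ so $\defIA\simeq\K^{6\times 6}$ is simple, but $\alpha_3+(\alpha_1+\alpha_2)\beta^2=0$, so $\defT$ is \emph{not} semisimple. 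The first example kills your ``only if'' argument (the ``short check'' that $\Rad(\defIA)\cdot\defT$ is a nilpotent two-sided ideal must fail), and the second kills your ``if'' argument (analysing the Ore extension over a simple base cannot detect the new radical).

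What the paper does instead is apply Proposition~\ref{pro:radical} directly to $\defT(\alpha_1,\alpha_2,\alpha_3)$ rather than to $\defIA$. The crucial observation is that $yd=dy$ (Lemma~\ref{lem:Tyrels}), so the idempotent $e\in\K[y]$ commutes with $d$ and hence $e\defT e$ is strictly larger than $e\defIA e$: it has dimension $8$, generated by $et$, $ev_+^3$, and the new element $ed$. The relations $d^2=\alpha_1$, $dt+td=2\alpha_2-y$, $dv_+^3+v_+^3d=2y^2+\beta^2$ exhibit $e\defT e$ as a Clifford algebra $C(W,q'_\gamma)$ on a \emph{three}-dimensional space $W$, and it is the discriminant of $q'_\gamma$ that yields the correct factor $\alpha_3+(\alpha_1+\alpha_2)\beta^2$. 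The extra generator $ed$ is precisely what couples $d$ to the $y$-eigenspace decomposition and produces a different semisimplicity condition than for $\defIA$. Once $e\defT e\simeq C(W,q'_\gamma)$ is established, Theorem~\ref{thm:Clifford}(3) (odd-dimensional nondegenerate Clifford algebras split as two simple pieces) combined with the ideal correspondence in Proposition~\ref{pro:radical}(2) shows $\defT$ is a product of exactly two simple ideals, and $72=6^2+6^2$ forces $(\K^{6\times 6})^2$.
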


\begin{proof}
	Assume that $\alpha_3=0$. Lemma~\ref{lem:Tyrels} implies that the left ideal
	generated by $y$ is a two-sided nilpotent ideal. Hence
	$\defT(\alpha_1,\alpha_2,0)$ is not semisimple. 
	
	Now assume that $\alpha_3\ne0$. Let $\zeta\in\K$ be such that
	$\zeta^2+\zeta+1=0$. Lemma~\ref{lem:invertible}
	and \eqref{eq:u1v1-1}
	imply that there exists an invertible element $x\in
	\defT(\alpha_1,\alpha_2,\alpha_3)$ such that $yx=\zeta xy$.
	By Proposition~\ref{pro:radical}(3),
	there exists a primitive idempotent $e\in \K[y]$ such that
	$\defT(\alpha_1,\alpha_2,\alpha_3)$
	is semisimple if and only if
	$T=e\defT(\alpha_1,\alpha_2,\alpha_3)e$ is semisimple.
	Since $y^3=\alpha_3$, there exists $\gamma\in \K$ such that $ye=\gamma e$ and
	$\gamma^3=\alpha_3$.

	Recall that  $yd=dy$ by Lemma~\ref{lem:Tyrels}. Therefore
	$T$ is generated by $et$, $ev_+^3$ and $ed$
	by
	\eqref{eq:T:decomposition} and Lemma~\ref{lem:eKe}, where
	$t=a+b+c$ and $v_+=a+\zeta b+\zeta^2c$.
	Moreover, $\dim T=8$.
	Now we obtain that 
	\begin{align}
		\label{eq:d}
		d^2 &= \alpha_1,
		&&
		dt+td=2\alpha_2-y,
		&&
		dv_+^3+v_+^3d=2y^2+(3\alpha_1-\alpha_2)^2.
	\end{align}
	Hence $T$ is isomorphic to the Clifford algebra
	$C(W,q'_{\gamma})$, where $W$ is a three-dimensional
	vector space and $q'_{\gamma}$ is the quadratic form on $W$ given by 
	\begin{align*}
	q'_{\gamma} (\lambda_1x_1&+\lambda_2x_2+\lambda_3x_3)=(\gamma+3\alpha_1+2\alpha_2)\lambda_1^2
+(\gamma^3+\beta^3)\lambda_2^2+\alpha_1\lambda_3^2\\
&+(2\gamma^2-2\beta \gamma -\beta^2)\lambda_1\lambda_2
+(2\alpha_2-\gamma)\lambda_1\lambda_3
+(2\gamma^2+\beta^2)\lambda_2\lambda_3
	\end{align*}
	with respect to a fixed basis $x_1,x_2,x_3$ of $W$, and
	$\beta=3\alpha_1-\alpha_2$.
	The semisimplicity of $C(W,q'_{\gamma})$ is equivalent
	to the nondegeneracy of
	$q'_{\gamma}$, that is, to 
	$\alpha_3+(\alpha_1+\alpha_2)(3\alpha_1-\alpha_2)^2\ne0$.

	Assume now that $\defT(\alpha_1,\alpha_2,\alpha_3)$ (equivalently, $T$) is semisimple.
	By Proposition~\ref{pro:radical}(2) and Theorem~\ref{thm:Clifford}(3), the
	algebra
	$\defT(\alpha_1,\alpha_2,\alpha_3)$ is the direct product of two simple
	ideals. Then the claim follows from the fact that
	$72=6^2+6^2$ is the only decomposition of $72$ as a sum of two squares.
\end{proof}

A result analogous to Corollary~\ref{cor:FK3:pi} is the following:

\begin{cor}
	Any polynomial identity of $(\K^{6\times6})^2$ is a polynomial
	identity of $\defT(\alpha_1,\alpha_2,\alpha_3)$ for any $\alpha_1,\alpha_2,\alpha_3\in\K$. 
\end{cor}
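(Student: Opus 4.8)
The plan is to mimic the proof of Corollary~\ref{cor:FK3:pi}, using the argument scheme of Proposition~\ref{pro:folklore} rather than the statement verbatim. First I would recall from Theorem~\ref{thm:T:basis} that $\defT(\alpha_1,\alpha_2,\alpha_3)$ is a PBW deformation of $B$ for \emph{all} $(\alpha_1,\alpha_2,\alpha_3)\in\K^3$, with the explicit monomial basis indexed by $n_1,n_4\in\{0,1,2\}$ and $n_2,n_3,n_5\in\{0,1\}$; in particular the structure constants of $\defT(\alpha_1,\alpha_2,\alpha_3)$ in this basis are polynomial functions of $(\alpha_1,\alpha_2,\alpha_3)$ (this follows from the reduction procedure in the proof of Theorem~\ref{thm:T:basis} together with the polynomial dependence of the relations on the parameters). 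Consequently, for any fixed noncommutative polynomial $f(x_1,\dots,x_k)$ and any choice of basis elements substituted for the variables, the coefficient of each basis monomial in $f$ evaluated in $\defT(\alpha_1,\alpha_2,\alpha_3)$ is a polynomial in $(\alpha_1,\alpha_2,\alpha_3)$.

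Next I would invoke Theorem~\ref{thm:T:semisimple}: on the Zariski-dense open set $U=\{\alpha_3(\alpha_3+(\alpha_1+\alpha_2)(3\alpha_1-\alpha_2)^2)\ne0\}$ the algebra $\defT(\alpha_1,\alpha_2,\alpha_3)$ is isomorphic to $(\K^{6\times6})^2$. Hence if $f$ is a polynomial identity of $(\K^{6\times6})^2$, then $f$ vanishes identically on $\defT(\alpha_1,\alpha_2,\alpha_3)$ for every $(\alpha_1,\alpha_2,\alpha_3)\in U$. Fixing any substitution of basis elements for the arguments of $f$, each coefficient-polynomial described above vanishes on the dense set $U$, hence vanishes identically on $\K^3$. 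Therefore $f$ vanishes on $\defT(\alpha_1,\alpha_2,\alpha_3)$ for \emph{all} parameter values, which is exactly the assertion that $f$ is a polynomial identity of $\defT(\alpha_1,\alpha_2,\alpha_3)$ for every $(\alpha_1,\alpha_2,\alpha_3)\in\K^3$. Since a polynomial identity need only be checked on a spanning set (in fact on a basis) of the algebra, checking it on all substitutions of basis monomials suffices.

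The only point requiring a little care — and the closest thing to an obstacle — is making precise that the structure constants, and hence the evaluation of $f$, depend polynomially on the parameters. This is where one must actually use the uniform PBW basis from Theorem~\ref{thm:T:basis} rather than just the dimension count: the normal-form reduction of an arbitrary word to a linear combination of basis monomials is carried out by repeatedly applying the defining relations (rewritten as in the Remark following Theorem~\ref{thm:T:basis}, or via the Ore-extension description of Proposition~\ref{pro:Ore}), whose coefficients are polynomials in $\alpha_1,\alpha_2,\alpha_3$, and the process terminates after a bounded number of steps since the basis is finite and independent of the parameters. Once this is granted, the density argument closes the proof exactly as in Corollary~\ref{cor:FK3:pi} and Proposition~\ref{pro:folklore}. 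I would therefore simply write: \emph{The proof is analogous to that of Corollary~\ref{cor:FK3:pi}, using Theorem~\ref{thm:T:semisimple} in place of Theorem~\ref{thm:FK3:semisimple} and Theorem~\ref{thm:T:basis} to ensure polynomial dependence of the structure constants on the parameters.}
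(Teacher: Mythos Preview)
Your proposal is correct and follows the same approach the paper intends: the paper states this corollary without proof, merely noting it is analogous to Corollary~\ref{cor:FK3:pi}, whose proof in turn says ``It follows from Theorem~\ref{thm:FK3:semisimple} and an argument similar to Proposition~\ref{pro:folklore}.'' You have simply unpacked that analogy explicitly, invoking Theorem~\ref{thm:T:semisimple} for the generic semisimplicity and Theorem~\ref{thm:T:basis} for the uniform PBW basis yielding polynomial dependence of the structure constants, which is exactly what is needed.
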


Recall the definition of the group $G$ in Remark~\ref{rem:G_X}.  The following
result classifies PBW deformation of the algebra $B$. These deformations already
appeared in~\cite[Theorem 6.3]{MR3119229} in the context of the classification
of finite-dimensional pointed Hopf algebras with non-abelian coradical.

\begin{thm}
	\label{thm:defT}
	Each PBW deformation of $B$ in the category of $G$-modules is of the form
	$\defT(\alpha_1,\alpha_2,\alpha_3)$, $\alpha_1,\alpha_2,\alpha_3\in\K$.
\end{thm}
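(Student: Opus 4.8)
The plan is to run the argument of Theorem~\ref{thm:defFK3}: use the symmetry group $G$ to force the deformed quadratic relations into the shape of Definition~\ref{def:tildeT}, and then identify the deformed degree-six relation by means of a central element. Let $\defT$ be a PBW deformation of $B$ in the category of $G$-modules. Since $G$ is a finite group whose order ($24$ or $48$, being a central extension of $\mathrm{SL}(2,3)$) is not divisible by $\charK$, the algebra $\K G$ is semisimple. Hence the $G$-submodule $F_1\defT$ of filtration degree $\le1$, which satisfies $F_0\defT=\K1$ and $F_1\defT/F_0\defT\cong B_1$, splits $G$-equivariantly as $\K1\oplus V$, where $V$ is the standard $4$-dimensional module with the action of Table~\ref{tab:X}. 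I would fix generators $a,b,c,d$ spanning $V$ and transforming according to Table~\ref{tab:X}; then $1,a,b,c,d$ are linearly independent and $a,b,c,d$ generate $\defT$.

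Next I would pin down the quadratic relations. Because $a^2=0$ and $ab+bc+ca=0$ in $B$, in $\defT$ one has $a^2=\lambda_0+\lambda_1a+\lambda_2b+\lambda_3c+\lambda_4d$ and $ab+bc+ca=\mu_0+\mu_1a+\mu_2b+\mu_3c+\mu_4d$ with $\lambda_i,\mu_i\in\K$. Applying $g_a$, which fixes $a^2$ and cyclically permutes $b,c,d$ up to sign, forces $\lambda_1=\dots=\lambda_4=0$, so $a^2=\alpha_1:=\lambda_0$; acting with elements of $G$ that carry $a$ to the other generators then gives $b^2=c^2=d^2=\alpha_1$. Applying $g_d$, which fixes $ab+bc+ca$ and cyclically permutes $a,b,c$ up to sign, forces $\mu_1=\dots=\mu_4=0$, so $ab+bc+ca=\alpha_2:=\mu_0$; acting with $g_a,g_b,g_c$ gives the remaining three triangle relations equal to $\alpha_2$. (This is where $G$ does more than $\SG3$ did for $\FK_3$: the stabilizer of each relation is already large enough to kill all linear corrections, so no commutator arguments are needed.) Consequently the defining relations of $\defT(\alpha_1,\alpha_2)$ hold in $\defT$, yielding a $G$-equivariant surjection $\defT(\alpha_1,\alpha_2)\twoheadrightarrow\defT$.

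For the degree-six relation, set $y=ac+cb+ba-\alpha_2\in\defT$. By Lemma~\ref{lem:Tyrels}, which holds in the quotient $\defT$, one has $yx=-(g_d\cdot x)y$ for $x\in\{a,b,c,d\}$, and since $g_d^3=-\id$ on $V$ this gives $y^3x=xy^3$; thus $y^3$ is central in $\defT$. Moreover $y$ has filtration degree $\le2$ with leading term the degree-two element $ac+cb+ba$ of $B$, whose cube vanishes in $B\cong\defT(0,0,0)$ (Theorem~\ref{thm:T:basis}); hence $y^3$ has filtration degree $\le5$, and its leading term is a central element of $B$ of degree $\le5$. The decisive point is then that $B$ has no nonzero homogeneous central element in degrees $1$ through $5$; granting this, the leading term of $y^3$ lies in $B_0=\K$, so $y^3\in F_0\defT=\K1$, say $y^3=\alpha_3$. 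Then the surjection $\defT(\alpha_1,\alpha_2)\twoheadrightarrow\defT$ kills $y^3-\alpha_3$ and therefore factors through $\defT(\alpha_1,\alpha_2,\alpha_3)=\defT(\alpha_1,\alpha_2)/(y^3-\alpha_3)$; since $\dim\defT(\alpha_1,\alpha_2,\alpha_3)=72=\dim B=\dim\defT$ by Theorem~\ref{thm:T:basis}, this surjection is an isomorphism, so $\defT\cong\defT(\alpha_1,\alpha_2,\alpha_3)$.

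The main obstacle is the statement about $Z(B)$ in low degrees. A partial reduction is available: since $y^3$ is central, $\defT$ decomposes as a direct product along the generalized eigenspaces of $y^3$, and taking associated graded of this decomposition would exhibit $B$ as a direct product of two or more algebras — impossible, as $B$ is connected graded and hence indecomposable — so $y^3$ has a single eigenvalue $\alpha_3$ and $y^3-\alpha_3$ is nilpotent. Upgrading ``nilpotent'' to ``zero'' still requires ruling out central nilpotents of $B$ in degrees $\le5$. I expect this is handled either by a finite linear-algebra computation in the PBW basis of Theorem~\ref{thm:T:basis} (the unique degree-$9$ element is automatically central, spanning the socle, and one checks that nothing central survives in lower positive degrees), or, in the style of the proof of Theorem~\ref{thm:defFK3}, by expanding the deformed degree-six relation in the PBW basis and eliminating every correction term using $G$-equivariance together with the centrality of $y^3$.
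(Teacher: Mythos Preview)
Your handling of the quadratic relations is correct and even a touch slicker than the paper's, which uses $g_a^3$ and $g_d^3$ (acting as $-1$ on all four generators) rather than $g_a$ and $g_d$ themselves. For the degree-six relation your centrality-of-$y^3$ strategy is a genuine alternative to the paper's, and you have correctly located the missing ingredient: the vanishing of $Z(B)$ in degrees $1$ through $5$.

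One step does fail, however. Your ``partial reduction'' asserts that if $y^3$ had more than one eigenvalue then $B=\mathrm{gr}\,\defT$ would inherit a nontrivial direct-product decomposition, contradicting connectedness. This inference is false: a PBW deformation of a connected graded (hence local, hence indecomposable) algebra can perfectly well be a nontrivial product. The baby example is $\K[x]/(x^2-1)\cong\K\times\K$ as a PBW deformation of the local algebra $\K[x]/(x^2)$; the central idempotents $(1\pm x)/2$ sit in filtration degree $1$, not $0$, so passing to the associated graded does not preserve the product structure. Hence you do not get ``$y^3-\alpha_3$ nilpotent'' for free, and the entire burden really does rest on the centre computation (or an equivalent).

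The paper takes exactly your second suggested route. It writes the deformed sixth relation as $y^3=t'_9$ with $t'_9$ of filtration degree at most $5$, expands $t'_9$ in the spanning set $v_\pm^{n_1}t^{n_2}y^{n_3}d^{n_4}$ coming from Lemma~\ref{lem:span} and the splitting~\eqref{eq:T:decomposition}, and uses $g_d$-invariance of $y^3$ together with the degree bound and the relations of Lemma~\ref{lem:uv} to reduce to
\[
t'_9=\lambda_1+\lambda_2\,td+\lambda_3\,y+\lambda_4\,v_+^3t+\lambda_5\,v_+^3d+\lambda_6\,tyd+\lambda_7\,y^2.
\]
Then, using that $y^3$ is central, it computes the commutators of $t'_9$ with $t$, with $d$, with $t+2d$, and finally with $a$, reading off in the PBW basis of Theorem~\ref{thm:T:basis} that $\lambda_2=\cdots=\lambda_7=0$. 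So the paper's argument is precisely the explicit elimination you anticipated; your more conceptual route via $Z(B)$ would subsume it, but only once that vanishing is actually verified.
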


\begin{proof}
	Let $T$ be a PBW deformation of $B$ in the category of $G$-modules.
	Theorem~\ref{thm:T:basis} for $B$ implies that $T$
	is given by generators $a,b,c,d$ and relations
	\begin{gather*}
		a^2=t_1,\quad b^2=t_2,\quad c^2=t_3,\quad d^2=t_4,\\
		ca + bc + ab =t_5,\quad da + cd + ac =t_6,\\
		db + ba + ad =t_7,\quad dc + cb + bd =t_8,\\
		(cb+ba+ac)^3=t_9,
	\end{gather*}
	where $t_1,t_2,\dots,t_8$ are linear combinations of $1,a,b,c,d$, and
	$t_9$ is a linear combination of monomials of degree at most $5$ in the
	generators $a,b,c,d$.
	By acting with $g_a^3$ on the equation $a^2=t_1$ one obtains that
	$a^2=\alpha_1$ for some $\alpha\in \K$.
	By acting with $g_b$ and $g_c$ we conclude that
	$b^2=c^2=d^2=\alpha_1$. Similarly, the actions of $g_d^3$ and $g_b$, $g_c$ on the equation
	$ca+bc+ab=t_5$ imply that $t_5=t_6=t_7=t_8=\alpha_2$ for some $\alpha_2\in
	\K$.
	Let us rewrite the last defining relation of $T$ to be
  $$
		y^3=t'_9,
	$$
	where $y=cb+ba+ac-\alpha_2$. It remains to show that $t'_9\in\K1$. 

	Lemma~\ref{lem:span}
	and~\eqref{eq:T:decomposition} imply that 
	\[
		t'_9=\sum_{n_1,\dots,n_4\in\N_0}\lambda_{n_1,\dots,n_4}v_+^{n_1}t^{n_2}y^{n_3}d^{n_4}+
		\sum_{n_1,\dots,n_4\in\N_0}\mu_{n_1,\dots,n_4}v_-^{n_1}t^{n_2}y^{n_3}d^{n_4}
	\]
        for some $\lambda_{n_1,\dots,n_4},\mu_{n_1,\dots,n_4}\in \K$.
	Since $g_d\cdot y^3=y^3$ and 
	\begin{align*}
	g_d\cdot v_+^{n_1}t^{n_2}y^{n_3}d^{n_4}&=\zeta^{n_1}(-1)^{n_1+n_2+n_4}v_+^{n_1}t^{n_2}y^{n_3}d^{n_4},\\
	g_d\cdot v_-^{n_1}t^{n_2}y^{n_3}d^{n_4}&=\zeta^{-n_1}(-1)^{n_1+n_2+n_4}v_-^{n_1}t^{n_2}y^{n_3}d^{n_4},
	\end{align*}
	we conclude that $\lambda_{n_1,\dots,n_4}=\mu_{n_1,\dots,n_4}=0$ whenever
	$3\nmid n_1$ or $n_1+n_2+n_4$ is odd. Moreover the degree of $t'_9$ is at
	most five. Hence $d^2=\alpha_1$ and~\eqref{eq:u12} imply that 
	\[
		t'_9=\lambda_1+\lambda_2td+\lambda_3y+\lambda_4v_+^3t+\lambda_5v_+^3d+\lambda_6tyd+\lambda_7y^2
	\]
	for some $\lambda_1,\dots,\lambda_7\in\K$. Equations $dt=-td+2\alpha_2-y$
	and~\eqref{eq:u1v13} imply that 
	\[
		t'_9t-tt'_9=\left( (\beta^2\lambda_5-2\lambda_2t^2)+2(\beta\lambda_5-\lambda_6t^2)y-2\lambda_5y^2\right)d+t'',
	\]
	for some $t''\in\defIA(\alpha_1,\alpha_2)$. Since $y^3t=ty^3$,
	using~\eqref{eq:T:decomposition} and Theorem~\ref{thm:T:basis} we conclude
	that $\lambda_5=0$. Similarly, using~\eqref{eq:d} and $dy=yd$, we obtain that 
	\[
		t'_9d-dt'_9=(\lambda_6y^2+(\lambda_2-2\alpha_2\lambda_6)y-2\alpha_2\lambda_2)d+t'''
	\]
	for some $t'''\in\defIA(\alpha_1,\alpha_2)$. Since $y^3d=dy^3$, we conclude
	from Theorem~\ref{thm:T:basis} that $\lambda_2=\lambda_6=0$. Since 
	\[
		0=t'_9(t+2d)-(t+2d)t'_9=-6\lambda_4ty^2+\text{terms of degree $\leq3$},
	\]
	Theorem~\ref{thm:T:basis} implies that $\lambda_4=0$. Finally $y^3a=ay^3$ and
	Lemma~\ref{le:abcyrels} imply that 
	\[
		0=t'_9a-at'_9=\lambda_3(c-a)y+\lambda_7(b-a)y^2
	\]
	and hence $t'_9=\lambda_1$ by Theorem~\ref{thm:T:basis}.
\end{proof}

%\section{Further questions}
%\label{section:questions}

\bibliographystyle{abbrv}
\bibliography{refs}

\def\cprime{$'$}
\begin{thebibliography}{10}

\bibitem{MR2799090}
N.~Andruskiewitsch, F.~Fantino, G.~A. Garc{\'{\i}}a, and L.~Vendramin.
\newblock On {N}ichols algebras associated to simple racks.
\newblock In {\em Groups, algebras and applications}, volume 537 of {\em
  Contemp. Math.}, pages 31--56. Amer. Math. Soc., Providence, RI, 2011.

\bibitem{MR2766176}
N.~Andruskiewitsch, I.~Heckenberger, and H.-J. Schneider.
\newblock The {N}ichols algebra of a semisimple {Y}etter-{D}rinfeld module.
\newblock {\em Amer. J. Math.}, 132(6):1493--1547, 2010.

\bibitem{MR1913436}
N.~Andruskiewitsch and H.-J. Schneider.
\newblock Pointed {H}opf algebras.
\newblock In {\em New directions in {H}opf algebras}, volume~43 of {\em Math.
  Sci. Res. Inst. Publ.}, pages 1--68. Cambridge Univ. Press, Cambridge, 2002.

\bibitem{MR2209265}
Y.~Bazlov.
\newblock Nichols-{W}oronowicz algebra model for {S}chubert calculus on
  {C}oxeter groups.
\newblock {\em J. Algebra}, 297(2):372--399, 2006.

\bibitem{MR2256407}
R.~Berger and V.~Ginzburg.
\newblock Higher symplectic reflection algebras and non-homogeneous
  {$N$}-{K}oszul property.
\newblock {\em J. Algebra}, 304(1):577--601, 2006.

\bibitem{MR3552907}
J.~Blasiak, R.~I. Liu, and K.~M\'esz\'aros.
\newblock Subalgebras of the {F}omin-{K}irillov algebra.
\newblock {\em J. Algebraic Combin.}, 44(3):785--829, 2016.

\bibitem{MR1620538}
W.~Crawley-Boevey and M.~P. Holland.
\newblock Noncommutative deformations of {K}leinian singularities.
\newblock {\em Duke Math. J.}, 92(3):605--635, 1998.

\bibitem{MR3459024}
D.~\c{S}tefan and C.~Vay.
\newblock The cohomology ring of the 12-dimensional {F}omin-{K}irillov algebra.
\newblock {\em Adv. Math.}, 291:584--620, 2016.

\bibitem{MR1881922}
P.~Etingof and V.~Ginzburg.
\newblock Symplectic reflection algebras, {C}alogero-{M}oser space, and
  deformed {H}arish-{C}handra homomorphism.
\newblock {\em Invent. Math.}, 147(2):243--348, 2002.

\bibitem{MR2147005}
P.~Etingof and E.~Rains.
\newblock New deformations of group algebras of {C}oxeter groups.
\newblock {\em Int. Math. Res. Not.}, (10):635--646, 2005.

\bibitem{MR2399085}
P.~Etingof and E.~Rains.
\newblock New deformations of group algebras of {C}oxeter groups. {II}.
\newblock {\em Geom. Funct. Anal.}, 17(6):1851--1871, 2008.

\bibitem{MR2236596}
G.~Fl{\o}ystad and J.~E. Vatne.
\newblock P{BW}-deformations of {$N$}-{K}oszul algebras.
\newblock {\em J. Algebra}, 302(1):116--155, 2006.

\bibitem{MR1667680}
S.~Fomin and A.~N. Kirillov.
\newblock Quadratic algebras, {D}unkl elements, and {S}chubert calculus.
\newblock In {\em Advances in geometry}, volume 172 of {\em Progr. Math.},
  pages 147--182. Birkh\"auser Boston, Boston, MA, 1999.

\bibitem{MR2102087}
W.~L. Gan and V.~Ginzburg.
\newblock Deformed preprojective algebras and symplectic reflection algebras
  for wreath products.
\newblock {\em J. Algebra}, 283(1):350--363, 2005.

\bibitem{MR3119229}
A.~Garc\'ia~Iglesias and C.~Vay.
\newblock Finite-dimensional pointed or copointed {H}opf algebras over affine
  racks.
\newblock {\em J. Algebra}, 397:379--406, 2014.

\bibitem{MR1800709}
M.~Gra{\~n}a.
\newblock On {N}ichols algebras of low dimension.
\newblock In {\em New trends in {H}opf algebra theory ({L}a {F}alda, 1999)},
  volume 267 of {\em Contemp. Math.}, pages 111--134. Amer. Math. Soc.,
  Providence, RI, 2000.

\bibitem{MR3276225}
I.~Heckenberger and L.~Vendramin.
\newblock Nichols algebras over groups with finite root system of rank two
  {II}.
\newblock {\em J. Group Theory}, 17(6):1009--1034, 2014.

\bibitem{L17}
S.~{Lentner}.
\newblock Quantum groups and {N}ichols algebras acting on conformal field
  theories.
\newblock {\em arXiv:1702.06431}, 2017.

\bibitem{MR3459053}
R.~I. Liu.
\newblock On the commutative quotient of {F}omin-{K}irillov algebras.
\newblock {\em European J. Combin.}, 54:65--75, 2016.

\bibitem{MR2106930}
S.~Majid.
\newblock Noncommutative differentials and {Y}ang-{M}ills on permutation groups
  {$S_n$}.
\newblock In {\em Hopf algebras in noncommutative geometry and physics}, volume
  239 of {\em Lecture Notes in Pure and Appl. Math.}, pages 189--213. Dekker,
  New York, 2005.

\bibitem{MR1969778}
S.~Majid and E.~Raineri.
\newblock Electromagnetism and gauge theory on the permutation group {$S_3$}.
\newblock {\em J. Geom. Phys.}, 44(2-3):129--155, 2002.

\bibitem{MR3177534}
K.~M\'esz\'aros, G.~Panova, and A.~Postnikov.
\newblock Schur times {S}chubert via the {F}omin-{K}irillov algebra.
\newblock {\em Electron. J. Combin.}, 21(1):Paper 1.39, 22, 2014.

\bibitem{MR1800714}
A.~Milinski and H.-J. Schneider.
\newblock Pointed indecomposable {H}opf algebras over {C}oxeter groups.
\newblock In {\em New trends in {H}opf algebra theory ({L}a {F}alda, 1999)},
  volume 267 of {\em Contemp. Math.}, pages 215--236. Amer. Math. Soc.,
  Providence, RI, 2000.

\bibitem{MR1299371}
T.~Mora.
\newblock An introduction to commutative and noncommutative {G}r\"obner bases.
\newblock {\em Theoret. Comput. Sci.}, 134(1):131--173, 1994.
\newblock Second International Colloquium on Words, Languages and Combinatorics
  (Kyoto, 1992).

\bibitem{MR1904729}
F.~Ngakeu, S.~Majid, and D.~Lambert.
\newblock Noncommutative {R}iemannian geometry of the alternating group
  {${A}_4$}.
\newblock {\em J. Geom. Phys.}, 42(3):259--282, 2002.

\bibitem{MR2177131}
A.~Polishchuk and L.~Positselski.
\newblock {\em Quadratic algebras}, volume~37 of {\em University Lecture
  Series}.
\newblock American Mathematical Society, Providence, RI, 2005.

\bibitem{Bastian}
B.~Roehrig.
\newblock {\em Deformed Fomin-Kirillov Algebras and Applications}.
\newblock PhD thesis, Philipps-Universit\"at, Marburg, 2016.
\newblock \texttt{DOI:10.17192/z2017.0049}.

\bibitem{MR770063}
W.~Scharlau.
\newblock {\em Quadratic and {H}ermitian forms}, volume 270 of {\em Grundlehren
  der Mathematischen Wissenschaften [Fundamental Principles of Mathematical
  Sciences]}.
\newblock Springer-Verlag, Berlin, 1985.

\bibitem{MR3317849}
A.~V. Shepler and S.~Witherspoon.
\newblock P{BW} deformations of skew group algebras in positive characteristic.
\newblock {\em Algebr. Represent. Theory}, 18(1):257--280, 2015.

\bibitem{MR3784828}
C.~Walton and S.~Witherspoon.
\newblock P{BW} deformations of braided products.
\newblock {\em J. Algebra}, 504:536--567, 2018.

\end{thebibliography}

\end{document}